\renewcommand{\aa}{\mathbb{A}}
\newcommand{\pp}{\mathbb{P}}
\renewcommand{\L}{\mathcal{L}}
\renewcommand{\O}{\mathcal{O}}
\newcommand{\N}{\mathcal{N}}
\newcommand{\sC}{\mathscr{C}}
\newcommand{\sE}{\mathscr{E}}
\newcommand{\sF}{\mathscr{F}}
\newcommand{\sP}{\mathscr{P}}
\newcommand{\chara}{\operatorname{char}}
\newcommand{\codim}{\operatorname{codim}}
\newcommand{\Hom}{\operatorname{Hom}}
\newcommand{\rk}{\operatorname{rk}}
\newcommand{\del}{\partial}
\renewcommand{\bar}{\overline}
\newcommand{\leqpar}{\underset{{\scriptscriptstyle (}-{\scriptscriptstyle )}}{<}}
\newcommand{\defi}[1]{\textsf{#1}}
\newtheorem{thm}{Theorem}[section]
\newtheorem{ithm}{Theorem}
\newtheorem{lem}[thm]{Lemma}
\newtheorem{conj}[thm]{Conjecture}
\newtheorem{prop}[thm]{Proposition}
\newtheorem{cor}[thm]{Corollary}
\theoremstyle{definition}
\newtheorem{defin}[thm]{Definition}
\newtheorem{example}[thm]{Example}
\newtheorem{assumption}[thm]{Assumption}
\theoremstyle{remark}
\newtheorem{rem}{Remark}
\title{Stability of Normal Bundles of Space Curves}
\author{Izzet Coskun}
\address{Department of Mathematics, Statistics, and CS \\
University of Illinois at Chicago, Chicago IL 60607}
\email{coskun@math.uic.edu}
\author{Eric Larson}
\address{Department of Mathematics, University of Washington \\
Padelford Hall (PDL), Seattle, WA 98195}
\email{elarson3@gmail.com}
\author{Isabel Vogt}
\address{Department of Mathematics, University of Washington \\
Padelford Hall (PDL), Seattle, WA 98195}
\email{ivogt.math@gmail.com}
\thanks{During the preparation of this article, I.C.\ was supported
by NSF FRG grant DMS-1664296, and E.L.\ and I.V.\ were supported by
NSF MSPRF grants DMS-1802908 and DMS-1902743 respectively.}
\subjclass[2010]{Primary: 14H50, 14H60. Secondary: 14B99.}
\begin{document}
\maketitle

\begin{abstract}
In this paper, we prove that the normal bundle of a general Brill-Noether space curve of degree $d$ and genus $g \geq 2$ is stable if and only if $(d,g) \not\in \{ (5,2), (6,4) \}$.  When $g\leq1$ and the characteristic of the ground field is zero, it is classical that the normal bundle is strictly semistable.  We show that this still holds in positive characteristic except when the characteristic is $2$, the genus is \(0\) and the degree is even.\end{abstract}

\section{Introduction}
Let $C$ be a smooth connected curve defined over an algebraically closed field $k$ (of arbitrary characteristic).  The normal bundle $N_{C/\pp^r}$ of a smooth curve controls the deformations of the curve in $\pp^r$ and plays a crucial role in many problems of geometry, arithmetic and commutative algebra. In this paper, we show that the normal bundle of a general  Brill-Noether space curve of degree $d$ and genus $g$ is stable if and only if $g \geq 2$ and $(d,g)\not\in \{ (5,2), (6,4) \}$.

Let $E$ be a vector bundle on a smooth curve $C$.  Let the \defi{slope $\mu(E)$} be
\[\mu(E) \colonequals \frac{\deg(E)}{\rk(E)}.\]
Then $E$ is called  \defi{(semi)stable} if every proper subbundle \(F\)
(which is always assumed to be saturated in this paper)
of smaller rank satisfies 
\[\mu(F) \leqpar \mu(E).\] 
The  bundle is called \defi{unstable} if it is not semistable and \defi{strictly semistable} if it is semistable but not stable. 

By the Brill-Noether Theorem (see \cite{kleimanlaksov, griffithsharris, gieseker, acgh, oss, jp, clt}), a general curve of genus $g$ admits a nondegenerate, degree $d$ map to $\pp^r$ if and only if the Brill-Noether number $\rho(g,r,d)$ satisfies $$\rho(g,r,d) : = g - (r+1)(g-d+r) \geq 0.$$  When $r \geq 3$, there is a unique component of the Hilbert scheme that dominates the moduli space $\overline{M}_g$ and whose general member parameterizes a smooth, nondegenerate curve of degree $d$ and genus $g$ in $\pp^r$. We call a member of this component a \defi{Brill--Noether curve}. When $r=3$, we call such a curve a  \defi{Brill-Noether space curve}.  With this terminology, our main theorem is the following.

\begin{ithm}\label{thm:main}
Let $C \subseteq \pp^3$ be a general Brill--Noether space curve of degree $d$ and genus $g$
over an algebraically closed field $k$.
\begin{enumerate}
\item $N_C$ is stable if and only if $g\geq 2$ and $(d,g) \not\in \{ (5,2), (6,4) \}$.
\item $N_C$ is strictly semistable if and only if $g < 2$ and one of the following holds: $\operatorname{char}(k) \neq 2$, $g = 1$, or $d$~is odd.
\item $N_C$ is unstable if and only if $(d, g) \in \{(5, 2), (6, 4)\}$,
or all of the following hold: $\operatorname{char}(k)=2$, $g = 0$, and $d$ is even.
\end{enumerate}
\end{ithm}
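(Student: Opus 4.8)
The plan is to convert the trichotomy into a single numerical invariant and then pin down the boundary in each regime. From the normal bundle sequence $0 \to T_C \to T_{\pp^3}|_C \to N_C \to 0$ one computes $\deg N_C = 4d + 2g - 2$, so that $\mu(N_C) = 2d + g - 1 \in \zz$. Since $N_C$ has rank two, writing $\ell(N_C)$ for the maximal degree of a line subbundle, the bundle is stable, strictly semistable, or unstable according as $\ell(N_C) < \mu(N_C)$, $\ell(N_C) = \mu(N_C)$, or $\ell(N_C) > \mu(N_C)$. The three cases of the theorem are mutually exclusive and together exhaust all triples $(d, g, \chara k)$, so it suffices to prove the ``if'' direction of each, i.e.\ to locate $\ell(N_C)$ relative to $\mu(N_C)$ in every case.

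For low genus I would argue as follows. When $g = 1$, Atiyah's classification of bundles on an elliptic curve (valid over any algebraically closed field) shows that no rank-two bundle of even degree is stable, as $\gcd(2, \deg N_C) = 2$; hence strict semistability is equivalent to semistability, i.e.\ to the bound $\ell(N_C) \le \mu(N_C)$, which I will obtain from the same degeneration estimate as in the stable range, in all characteristics. When $g = 0$ the normal bundle is $\O(a) \oplus \O(b)$ with $a + b = 4d - 2$, and the curve is strictly semistable exactly when this splitting is balanced ($a = b = 2d - 1 = \mu(N_C)$) and unstable exactly when it is not; in particular it is never stable. Balancedness of the general rational space curve holds whenever $\chara k \ne 2$ or $d$ is odd, while it fails for even $d$ in characteristic two, which is the new characteristic-two breakdown of the balancedness theorem that I would establish by producing the destabilizing sub-line-bundle directly from the inseparable structure of the parametrization.

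The two unstable exceptions in positive genus come from curves lying on a quadric surface. For $(d,g) = (5,2)$, respectively $(6,4)$, the general Brill--Noether curve is a divisor of type $(2,3)$, respectively $(3,3)$, on a smooth quadric $Q \cong \pp^1 \times \pp^1$ (a dimension count matches the expected $4d$ of the Hilbert scheme component, and $(6,4)$ is the canonical genus-four curve). The inclusion $C \subseteq Q \subseteq \pp^3$ yields via $0 \to N_{C/Q} \to N_C \to N_{Q/\pp^3}|_C \to 0$ a saturated line subbundle $N_{C/Q} = \O_C(C) \hookrightarrow N_C$ of degree $C^2 = 12$, respectively $18$, strictly exceeding $\mu(N_C) = 11$, respectively $15$. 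Thus $\ell(N_C) > \mu(N_C)$ and $N_C$ is unstable; the analogous computation for the elliptic quartic (a $(2,2)$ curve, with $C^2 = 8 = \mu$) is consistent with the strictly semistable verdict at $(d,g)=(4,1)$.

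The heart of the argument, and the step I expect to be the main obstacle, is stability for $g \ge 2$ with $(d,g) \notin \{(5,2),(6,4)\}$, i.e.\ the strict bound $\ell(N_C) \le \mu(N_C) - 1$ for the general member. I would prove this by induction on $(d,g)$, degenerating $C$ to a nodal curve $C' \cup L$ in which $L$ is a line meeting a general Brill--Noether curve $C'$ of smaller invariants either in one point (lowering $d$ by one and fixing $g$) or in two points (lowering both by one), arranged so that the family stays in the distinguished component dominating $\overline{M}_g$. The normal bundle of the union is governed by the restriction sequence $0 \to N_{C' \cup L}|_L(-\Gamma) \to N_{C' \cup L} \to N_{C' \cup L}|_{C'} \to 0$ together with the identification $N_{C' \cup L}|_{C'} \cong N_{C'}(\Gamma)$ at the attaching divisor $\Gamma$, and one must control how this twist interacts with a putative destabilizing subbundle, very likely organized around an inductive hypothesis refined enough to bound all sub-line-bundles simultaneously. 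The delicate points are: selecting and computing by hand the base cases (the minimal genus-two configurations just above the exceptions, together with elliptic and rational building blocks); verifying that the degenerations remain Brill--Noether; and, most importantly, showing through the node-twist sequences that no line subbundle of degree $\ge \mu(N_C)$ survives the specialization, so that the quadric-surface subbundle responsible for instability appears if and only if $(d,g) \in \{(5,2),(6,4)\}$. Finally, stability being an open condition in the flat family interpolating between $C' \cup L$ and a general smooth Brill--Noether curve, the bound on the reducible special fiber propagates to the general member, closing the induction.
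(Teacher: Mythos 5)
Your reduction of the trichotomy to locating the maximal line subbundle degree, your quadric-surface computation at $(5,2)$ and $(6,4)$ (type $(2,3)$ and $(3,3)$ curves, $\deg N_{C/Q} = 12, 18$ versus $\mu = 11, 15$), and your characteristic-2 parity obstruction for rational curves all match the paper. But there is a genuine gap in your inductive scheme for $g \geq 2$: you propose to degenerate only to $C' \cup L$ with $L$ a $1$-secant line (reducing $(d,g)$ to $(d-1,g)$) or a $2$-secant line (reducing to $(d-1,g-1)$). Since $\rho(g,3,d) = 4d - 3g - 12$, these moves change $\rho$ by $-4$ and $-1$ respectively, so $\rho$ strictly decreases at every step, and a curve must have $\rho \geq 0$ to be Brill--Noether. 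Consequently any general Brill--Noether curve with $\rho = 0$ --- and these exist in every genus $g \equiv 0 \pmod 4$, e.g.\ $(9,8)$, $(12,12)$, $(15,16), \ldots$ --- admits \emph{neither} of your degenerations, and curves with small $\rho$ run out of moves after at most $\rho$ steps while their genus remains large. Your induction therefore terminates in infinitely many irreducible base cases, one cluster in each genus, rather than a finite hand-checkable list. The paper's essential extra move, absent from your proposal, is the degeneration to a curve of degree $d-2$ and genus $g-3$ union a $4$-secant conic (Lemma \ref{lem:rightcomp}\eqref{lem:rightcomp_4sec}): this \emph{increases} $\rho$ by $1$ while lowering the genus, and combined with the deformation argument of Lemmas \ref{lem:4_sec_conic}--\ref{lem:pancake} it reduces the whole theorem to $g \leq 8$.

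Two further points where the plan as written would break. First, your identification $N_{C' \cup L}|_{C'} \cong N_{C'}(\Gamma)$ is wrong: by the Hartshorne--Hirschowitz formula (Lemma \ref{lem:hh}) the restriction is $N_{C'}(\Gamma)$ \emph{elementarily modified} at each attaching point toward a point on the tangent line of $L$, e.g.\ $N_{C'}(p)[p \to q]$, whose degree differs from that of the plain twist; this modification is not a technicality --- the pointing bundles carry exactly the gluing data the paper's base-case analysis runs on, and for a $1$-secant line the restriction $N_{C' \cup L}|_L \simeq \O_{\pp^1}(1) \oplus \O_{\pp^1}(2)$ is unbalanced, so no argument tracking only slopes of restrictions to the two components can close the induction (this is why the paper proves (semi)stability of $N_{C'}[2p \to q]$ in Lemmas \ref{lem:1_sec_balanced} and \ref{lem:1_sec}, and why base cases require nonsplitness statements like Lemma \ref{lem:52_sections_from_quadric}). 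Second, your claim that the degeneration estimate works ``in all characteristics'' fails: the limit computation underlying Lemma \ref{lem:1_sec} produces the subspace $m_0 = 2n_0 + sm_1 = 0$, whose limit as $s \to 0$ degenerates precisely when $2 = 0$; in characteristic $2$ the paper must attach two $1$-secant lines at a time (Corollary \ref{cor-dplus} with $\epsilon = 2$) and verify the longer list \eqref{finite_list}, and the semistability you assert for $g \leq 1$ in characteristic $2$ (odd $d$, and all of $g=1$) depends on this workaround, which your proposal does not supply.
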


The normal bundles of curves in projective space have been studied by many authors (for example, see \cite{aly, ballicoellia, coskunriedl, einlazarsfeld, ellia, ellingsrudhirschowitz, ellingsrudlaksov, newstead, ran, sacchiero, sacchiero2, sacchiero3}). Our results complete and unify these results for Brill-Noether space curves. 

If $(d, g) \in \{(5, 2), (6, 4)\}$, then $C$ lies on a unique quadric $Q$ and $N_{C/Q} \subset N_C$ gives a destabilizing subbundle. We will describe the geometry in these two cases more explicitly in \S \ref{sec-unstable}.

Every bundle on $\pp^1$ splits as a direct sum of line bundles. Hence, the normal bundle of a smooth rational curve can be written as $N_{C} = \bigoplus_{i=1}^{r-1}\O(a_i)$ for some integers $a_1, \dots, a_{r-1}$ with $$\sum_{i=1}^{r-1} a_i = (r+1)d -2.$$ If $C$ is a general rational curve of degree at least $d \geq r$ in $\pp^r$, and the characteristic of the ground field is not $2$, then $N_{C/ \pp^r}$ splits as equally as possible, i.e. $|a_i - a_j| \leq 1$ (see \cite{sacchiero, ran, coskunriedl, aly}). Hence, $N_{C/ \pp^r}$ is strictly semistable when $r-1$ divides $2d -2$ and is unstable otherwise. When $r=3$ and $\text{char}(k)\neq 2$, since the quantity $2d-2$ is always even, the normal bundle of a general rational curve of degree $d \geq 3$ is strictly semistable.  If the characteristic is $2$, we show in Lemma \ref{lem:char2_obs} that all $a_i \equiv d$ mod $2$; this obstructs semistability for rational curves with $d$ even.

Similarly, normal bundles of genus one curves have been studied extensively (see \cite{einlazarsfeld, ellingsrudhirschowitz, ellingsrudlaksov}). By \cite{ellingsrudhirschowitz}, the normal bundle of a general nondegenerate genus one space curve is semistable. On the other hand, on a genus one curve, there are no stable rank $2$ bundles of degree $4d$. Hence, the normal bundle of a general genus one space curve of degree $d\geq 4$ is strictly semistable. Our techniques will provide short arguments reproving the $g=0$ and $1$ cases.

In higher genus, the previously known results were more sporadic.  The stability of the normal bundle was proved  for $(d, g)=(6, 2)$ by Sacchiero \cite{sacchiero3}, for $(d, g)=(9, 9)$ by Newstead \cite{newstead}, for $(d, g)=(6, 3)$ by Ellia \cite{ellia}, and for $(d, g)=(7, 5)$ by Ballico and Ellia \cite{ballicoellia}. Many of these cases will be important for our inductive arguments. For completeness, we will reprove these cases using our techniques or briefly recall
the arguments.  More generally, in \cite{ellingsrudhirschowitz}, Ellingsrud and Hirschowitz announced a proof of stability of normal bundles in an asymptotic range of degrees and genera; however, their results do not cover many of the most challenging cases of small degree.

We prove Theorem \ref{thm:main} by specialization.  We use three basic specializations: (1) we specialize to a curve of degree $(d-1, g)$ union a $1$-secant line; (2) we specialize to a curve of degree $(d-1, g-1)$ union a $2$-secant line; and finally (3) we specialize to a curve of degree $(d-2, g-3)$ union a $4$-secant conic.  These degenerations reduce Theorem \ref{thm:main} to a finite set of base cases.
The most challenging part of the paper is to verify these base cases.

We expect our techniques and results to generalize to $\pp^r$ for $r\geq 3$ and hopefully settle the following conjecture.

\begin{conj} \label{conj:larger}
The normal bundle of a general Brill-Noether curve of genus at least $2$ in $\pp^r$ is stable except for finitely many triples $(d,g, r)$. 
\end{conj}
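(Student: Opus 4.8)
The plan is to prove Conjecture~\ref{conj:larger} by carrying the degeneration strategy of Theorem~\ref{thm:main} to all $r\geq 3$, inducting on the pair $(d,g)$. For fixed $r$ I would set up $\pp^r$-analogues of the three basic specializations, degenerating a general Brill--Noether curve of invariants $(d,g)$ to a nodal union $C_0 = C'\cup_\Gamma D$ of a curve $C'$ of strictly smaller $d+g$ and an attached rational normal curve $D$ meeting $C'$ along a secant scheme $\Gamma$; the admissible secancies and degrees of $D$ now depend on $r$, and choosing them so that the limit is a smoothable Brill--Noether degeneration is already part of the work. The engine is the restriction (Mayer--Vietoris) sequence
\[
0 \longrightarrow N_{C_0} \longrightarrow N_{C_0}|_{C'}\oplus N_{C_0}|_{D} \longrightarrow N_{C_0}|_\Gamma \longrightarrow 0,
\]
together with the identification of each restriction $N_{C_0}|_{C'}$ and $N_{C_0}|_{D}$ as an elementary modification of $N_{C'}$, respectively $N_{D}$, along $\Gamma$ in the tangent directions of the other component. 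Granting stability of the normal bundles of the smaller pieces by induction, and the splitting behaviour of normal bundles of rational curves recalled above, one reads off stability of $N_{C_0}$ from the slopes of the modified pieces; since stability is an open condition in a flat family, it then propagates to the general smooth curve. As in the space curve case, each move strictly decreases $d+g$, so for each fixed $r$ the induction terminates in a finite list of base cases.

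To upgrade ``finite for each $r$'' to ``finite over all $(d,g,r)$'', I would combine this with the asymptotic semistability of Ellingsrud--Hirschowitz \cite{ellingsrudhirschowitz}, which already disposes of the regime of $d$ large relative to $g$, and with the projection sequence: for $r\geq 4$ a general point $p\notin C$ lies off the secant variety of $C$, so projection embeds $C$ as $\bar C\subset\pp^{r-1}$ of the same degree and genus and yields
\[
0 \longrightarrow \O_C(1) \longrightarrow N_{C/\pp^r} \longrightarrow \pi_p^{*}N_{\bar C/\pp^{r-1}} \longrightarrow 0.
\]
Here $\mu(\O_C(1))=d$ is always below $\mu(N_{C/\pp^r})=\tfrac{(r+1)d+2g-2}{r-1}$, so this sub-line-bundle is harmless. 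However, a direct induction on $r$ through this sequence cannot succeed unaided, because a short computation gives $\mu(N_{\bar C/\pp^{r-1}})-\mu(N_{C/\pp^r})=\tfrac{2(d+g-1)}{(r-1)(r-2)}>0$: a subsheaf of $N_{C/\pp^r}$ mapping isomorphically onto a subsheaf of slope in the window $[\mu(N_{C/\pp^r}),\mu(N_{\bar C/\pp^{r-1}}))$ would destabilize $N_{C/\pp^r}$ while remaining compatible with stability of $N_{\bar C/\pp^{r-1}}$. Thus stability genuinely requires fresh input at each $r$, and I would instead use the projection sequence only to constrain the \emph{slope and rank} of any hypothetical destabilizing subsheaf and, by varying the center $p$, to force such a subsheaf to come from special ambient geometry---reducing finiteness to a uniform-in-$r$ bound on the surviving base cases.

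The main obstacle will be exactly these base cases. For $d$ small relative to $g$ and $r$, the curve is forced onto a low-degree surface or scroll $S$, and the relative normal bundle $N_{C/S}\subset N_{C/\pp^r}$ is the natural candidate destabilizer---this is precisely the mechanism producing the exceptional pairs $(5,2)$ and $(6,4)$ on a quadric in $\pp^3$. Enumerating the finitely many base cases in each $\pp^r$, identifying all such scrolls, and deciding stability on the nose is where the real difficulty lies; the bookkeeping is genuinely harder than for space curves because in rank $r-1\geq 3$ one must control destabilizing subsheaves of every intermediate rank rather than merely sub-line-bundles. Finally, the characteristic-$2$ pathology of Lemma~\ref{lem:char2_obs} must be tracked throughout, since it can introduce additional sporadic exceptions; confirming that all of these effects remain bounded uniformly in $r$ is the crux of proving finiteness of the exceptional set.
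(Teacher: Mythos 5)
You were asked about Conjecture~\ref{conj:larger}, which the paper does \emph{not} prove: it is stated as an open conjecture, with the authors saying only that they ``expect'' their techniques to generalize to $\pp^r$. So there is no proof in the paper to compare against, and your submission --- appropriately hedged throughout --- is a research program, not a proof. To your credit, the program is sensible and tracks the authors' own stated expectations: the three basic specializations, openness of stability in families, induction terminating in base cases, and attention to the characteristic-$2$ pathology of Lemma~\ref{lem:char2_obs} all mirror the structure of the proof of Theorem~\ref{thm:main}. Your projection-sequence analysis is also correct as far as it goes: the slope computation $\mu(N_{\bar C/\pp^{r-1}})-\mu(N_{C/\pp^r})=\tfrac{2(d+g-1)}{(r-1)(r-2)}$ checks out, and your conclusion that a naive induction on $r$ through projection cannot work unaided is a genuine insight.

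Nevertheless, as a proof this has gaps at every load-bearing joint, and you should be clear that they are not routine. First, the degeneration toolkit itself must be rebuilt: the analogue of Lemma~\ref{lem:rightcomp} (that the chosen nodal unions are smoothable Brill--Noether curves in $\pp^r$, with $r$-dependent secancy numerics) is asserted, not constructed. Second, and most seriously, rank: for $r=3$ the normal bundle has rank $2$, so stability is controlled entirely by line subbundles, and essentially all of the paper's base-case machinery --- Lemma~\ref{lem:exact_seq}, the gluing-data arguments of Lemmas~\ref{lem:1_sec_balanced} and \ref{lem:limit_gluing}, and the nonsplitness computations such as Lemma~\ref{lem:52_sections_from_quadric} --- is rank-$2$ specific. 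You name the problem of intermediate-rank destabilizing subsheaves but offer no tool for it. Third, your termination argument (``each move strictly decreases $d+g$'') yields only finitely many \emph{base cases} per $r$; finiteness of exceptional triples requires actually \emph{proving those base cases stable}, which for $r=3$ the paper calls the most challenging part --- and the paper explicitly notes that the asymptotic results of \cite{ellingsrudhirschowitz} miss precisely the hard small-degree cases, so citing them does not close this. Finally, uniformity in $r$ --- the difference between ``finite for each $r$'' and ``finitely many triples $(d,g,r)$'' --- is the conjecture's actual content, and you yourself label it ``the crux'' without supplying a mechanism. In short: a reasonable outline of the expected attack, but the statement remains as open after your proposal as it is in the paper.
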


Conjecture \ref{conj:larger} is closely related to several conjectures in the literature. For example, Aprodu, Farkas and Ortega have conjectured that the normal bundle of a general canonical curve of $g \geq 7$ is stable \cite[Conjecture 0.4]{AFO} (see also \cite{Bruns}).


\subsection*{Organization of the paper} In  \S \ref{sec-prelim}, we will recall basic facts about normal bundles on nodal curves and elementary modifications. In  \S \ref{sec-unstable}, we will elaborate on the two cases $(d,g) \in \{(5, 2), (6, 4)\}$ as well as the obstruction to stability for rational curves in characteristic $2$. In  \S \ref{sec:stab_degen1}, we will introduce several basic degenerations to reduce the theorem to a small set of initial cases. For the rest of the paper, we will analyze these initial cases.

\subsection*{Acknowledgments} We would like to thank Atanas Atanasov, Lawrence Ein, Gavril Farkas, Joe Harris, Eric Riedl, Ravi Vakil, and David Yang for invaluable conversations on normal bundles of curves.
We would also like to thank the anonymous referee for many useful suggestions.

\section{Preliminaries}\label{sec-prelim}

In this section, we collect basic facts on normal bundles of curves,
stability of vector bundles, elementary modifications, and on certain
reducible Brill--Noether curves.
For more details, we refer the reader to \cite{aly, rbn, rbn2}.
When necessary, we provide a characteristic-independent proof here.

\subsection{The normal bundle of a space curve}
Let $C \subset \pp^r$ be a smooth Brill--Noether curve of degree $d$ and genus $g$.  The normal bundle $N_C$ is a rank $r-1$ vector bundle that is presented as a quotient
\[0 \to T_C \to T_{\pp^r}|_C \to N_C \to 0,\]
of the restricted tangent bundle of $\pp^r$ by the tangent bundle of $C$.  The restricted tangent bundle is itself naturally a quotient 
 in the Euler exact sequence
\begin{equation}\label{euler_seq} 0 \to \O_C \to \O_C(1)^{\oplus (r+1)} \to T_{\pp^r}|_C \to 0.\end{equation}
From this we see that $\deg(N_C) = (r+1)d  + 2g - 2$.
Specializing to $r=3$, we have that
\[\mu(N_C) = 2d + g -1,\]
and therefore $N_C$ is stable if and only if all line subbundles $L \subseteq N_C$ have slope at most $2d + g -2$.

If \(S\) is a surface in \(\pp^3\) containing \(C\) that is smooth at the generic point of \(C\), then we have an associated \defi{normal bundle exact sequence}
\begin{equation}\label{eq:normal_bundle_exact_seq}
0 \to N_{C/S} \to N_C \to N_{S}|_C \to 0.
\end{equation}
By adjunction, the bundle \(N_{C/S}\) is isomorphic to \(\O_S(C)|_C\).  A particularly simple case is when \(C\) is the complete intersection of two (smooth) surfaces \(S_1\) and \(S_2\) of degrees \(d_1\) and \(d_2\) in \(\pp^3\).  In this case the natural map
\[N_{C/S_1} \oplus N_{C/S_2} \to N_C\]
is an isomorphism, and, combining this with the adjunction isomorphism, we have \(N_{C} \simeq \O_C(d_1) \oplus \O_C(d_2)\).  Such a bundle is never stable, and is semistable if and only if \(d_1 = d_2\).  Relevant examples for us are lines (the normal bundle is isomorphic to \(\O_{\pp^1}(1)^{\oplus 2}\)), conics (the normal bundle is isomorphic to \(\O_{\pp^1}(2) \oplus \O_{\pp^1}(4)\)) and elliptic quartics $E$ (the normal bundle is isomorphic to $\O_E(2) \oplus \O_E(2)$).

\subsection{Stability of vector bundles on nodal curves}

In the course of our inductive argument, we will specialize a smooth Brill--Noether curve to a reducible nodal curve.
In this section, we generalize the definition of stability of vector bundles to allow $C$ to be a connected nodal curve.  We will write 
\[\nu \colon \tilde{C} \to C\] 
for the normalization of $C$.  For any node $p$ of $C$, write $\tilde{p}_1$ and $\tilde{p}_2$ for the two points of $\tilde{C}$ over $p$.

Given a vector bundle $E$ on $C$, the fibers of the pullback $\nu^*E$ to $\tilde{C}$ over $\tilde{p}_1$ and $\tilde{p}_2$ are naturally identified.  Given a subbundle $F \subseteq \nu^*E$, it therefore makes sense to compare $F|_{\tilde{p}_1}$ and $F|_{\tilde{p}_2}$ inside $\nu^*E|_{\tilde{p}_1} \simeq \nu^*E|_{\tilde{p}_2}$.

\begin{defin}
Let $E$ be a vector bundle on a connected nodal curve $C$.  
For a subbundle $F \subset \nu^*E$, define the \defi{adjusted slope $\mu^{\text{adj}}_C$} by 
\[\mu^{\text{adj}}_C(F) \colonequals \mu(F) - \frac{1}{\rk{F}} \sum_{p \in C_{\text{sing}}} \codim_{F} \left(F|_{\tilde{p}_1}\cap F|_{\tilde{p}_2} \right),\]
where $\codim_F \left(F|_{\tilde{p}_1}\cap F|_{\tilde{p}_2} \right)$ refers to the codimension
of the intersection in either $F|_{\tilde{p}_1}$ or $F|_{\tilde{p}_2}$ (which are equal
since $\dim F|_{\tilde{p}_1} = \dim F|_{\tilde{p}_2}$). 
When the curve $C$ is unambiguous, we will omit it from our notation and write simply $\mu^{\text{adj}}(F)$.
Note that if $F$ is pulled back from $C$, then $\mu^{\text{adj}}_C(F) = \mu(F)$.
We say that $E$ is \defi{(semi)stable} if for all subbundles $F \subset \nu^*E$,
\[\mu^{\text{adj}}(F) \leqpar \mu(\nu^*E) = \mu(E). \]
\end{defin}

With this definition, stability is an open condition in families of connected nodal curves.
To show this, we will need the following lemma.

\begin{lem} \label{lem:contract} Let $\beta \colon C' \to C$ be a map obtained by contracting a $1$- or $2$- secant $\pp^1$:
\begin{center}
\begin{tikzpicture}[scale=1.3]
\draw (0, 0) .. controls (0, 1) and (1, 1) .. (1, 0);
\draw (0.5, 0) -- (1.1, 0.3);
\draw (2, 0) .. controls (2, 1) and (3, 1) .. (3, 0);
\draw[->] (1.3, 0.3) -- (1.9, 0.3);
\draw (4, 0.3) node{or};
\draw (5, 0) .. controls (5, 1) and (6, 1) .. (6, 0);
\draw (4.9, 0.3) -- (6.1, 0.3);
\draw (7, 0) .. controls (9, 1) and (6, 1) .. (8, 0);
\draw[->] (6.3, 0.3) -- (6.9, 0.3);
\end{tikzpicture}
\end{center}
If $E$ is a (semi)stable vector bundle on $C$, then $\beta^* E$ is also (semi)stable.
\end{lem}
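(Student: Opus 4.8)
The plan is to verify the slope inequality for $\beta^* E$ directly against the hypothesis on $E$, by comparing every subbundle of $\nu'^*(\beta^* E)$ to its restriction over the non-contracted part. Write $Z \cong \pp^1$ for the contracted component and $\nu'\colon \tilde{C}' \to C'$ for the normalization, and observe that $\tilde{C}' = \tilde{C} \sqcup Z$ and that $\beta^* E|_Z$ is the trivial bundle $V \otimes \O_Z$, where $V$ is the fiber of $E$ at the image point $\beta(Z)$. In particular $\deg(\beta^* E) = \deg E$, so $\mu(\beta^* E) = \mu(E)$. Since the adjusted slope is only defined for a subbundle of a fixed rank, I fix a subbundle $F \subset \nu'^*(\beta^* E)$ of constant rank $s$ and split it as $F_{\tilde{C}} \colonequals F|_{\tilde{C}} \subseteq \nu^* E$ and $F_Z \colonequals F|_Z \subseteq V \otimes \O_Z$; here $F_{\tilde{C}}$ is again a genuine subbundle of $\nu^* E$ because $\tilde{C}$ is a union of connected components of $\tilde{C}'$.

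The heart of the argument is the inequality $\mu^{\text{adj}}_{C'}(F) \leq \mu^{\text{adj}}_C(F_{\tilde{C}})$. Granting it, (semi)stability of $E$ gives $\mu^{\text{adj}}_C(F_{\tilde{C}}) \leqpar \mu(E) = \mu(\beta^* E)$ (strictly, in the stable case, since $F_{\tilde{C}}$ has rank $s < \rk E$ whenever $F$ is proper), and we are done. To establish it I compare the two adjusted slopes: the nodes internal to $\tilde{C}$ contribute identically to both, so after clearing the common factor $\rk F = s$ the difference reduces to the contribution of $Z$ together with the new node(s) joining $Z$ to $\tilde{C}$. In the $2$-secant case one must note that the two preimages of the glued node of $C$ are exactly the two $\tilde{C}$-side attaching points of $Z$, so the single intersection penalty of $C$ at that node is compared against the two intersection penalties of $C'$.

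For the $1$-secant contraction there is a single new node $\{\tilde{q}_1, \tilde{q}_2\}$ (with $\tilde{q}_2 \in Z$), and one computes $s\bigl(\mu^{\text{adj}}_{C'}(F) - \mu^{\text{adj}}_C(F_{\tilde{C}})\bigr) = \deg F_Z - \bigl(s - \dim(F|_{\tilde{q}_1} \cap F|_{\tilde{q}_2})\bigr)$, which is manifestly $\leq 0$ since a subbundle of a trivial bundle on $\pp^1$ has non-positive degree. The $2$-secant case is the real content. Writing $A, B$ (resp.\ $A', B'$) for the fibers of $F$ over the two attaching points on the $\tilde{C}$-side (resp.\ the $Z$-side), one finds $s\bigl(\mu^{\text{adj}}_{C'}(F) - \mu^{\text{adj}}_C(F_{\tilde{C}})\bigr) = \deg F_Z - s + \dim(A \cap A') + \dim(B \cap B') - \dim(A \cap B)$. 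Two inputs finish the proof: first, a degree bound on $F_Z$ obtained by splitting $F_Z \cong \bigoplus_i \O(-a_i)$ with $a_i \geq 0$ via Grothendieck, whose trivial summands assemble into a constant subbundle with fiber contained in both $A'$ and $B'$, yielding $\deg F_Z \leq \dim(A' \cap B') - s$; second, the purely linear-algebraic inequality $\dim(A \cap A') + \dim(B \cap B') + \dim(A' \cap B') \leq 2s + \dim(A \cap B)$ for $s$-dimensional subspaces of $V$, which follows from two applications of the submodular bound $\dim(X \cap Y) + \dim(X \cap W) \leq \dim X + \dim(X \cap Y \cap W)$. Substituting the first into the displayed difference and invoking the second shows the difference is $\leq 0$.

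I expect the $2$-secant case to be the main obstacle: the delicate point is that the degree drop of $F_Z$ forced on $Z$, together with the two intersection penalties at the new nodes, must exactly absorb the single penalty available at the glued node of $C$, and it is precisely the linear-algebra inequality above where the specific form of the adjusted slope is used. The $1$-secant case, by contrast, is an immediate special case requiring no such bookkeeping.
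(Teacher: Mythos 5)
Your proposal is correct and is essentially the paper's proof: the same decomposition of a subbundle $F$ into its restrictions to $\tilde{C}$ and the contracted $\pp^1$, the same degree bound $\deg F_Z \leq \dim(A' \cap B') - s$ for a subbundle of a trivial bundle (which the paper asserts directly and you justify via the Grothendieck splitting), and your submodularity inequality applied twice is exactly equivalent to the paper's two applications of the codimension ``triangle inequality'' $\codim(X \cap Y) + \codim(Y \cap Z) \geq \codim(X \cap Z)$. The only difference is presentational: the paper keeps everything in terms of adjusted slopes and codimensions, while you unwind the same comparison into explicit dimension bookkeeping.
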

\begin{proof}
Write $\nu \colon \tilde{C} \to C$ and $\nu' \colon \tilde{C'} \to C'$
for the normalization maps.

First consider the $1$-secant case. Write $x$ for the point of attachment
(so that $C' = C \cup_x \pp^1$).  Let $E$ be a (semi)stable vector bundle on $C$, and let $F \subset \nu'^* \beta^* E$ be any subbundle.
Since $\nu'^*\beta^*E|_{\pp^1}$ is trivial, we have
\begin{equation} \label{muf}
\mu(F|_{\pp^1}) \leq 0.
\end{equation}
Since the ordinary slope is additive on components, 
and \(x\) is the only point in \(C'_{\text{sing}}\) that is not also in \(C_{\text{sing}}\), 
the definition of adjusted slope and
\eqref{muf} imply
\[\mu^{\text{adj}}_{C'}(F) = \mu^{\text{adj}}_C (F|_{\tilde{C}}) + \mu(F|_{\pp^1}) - \frac{\codim_F \left(F|_{\tilde{x}_1}\cap F|_{\tilde{x}_2} \right)}{\rk F} \leq \mu^{\text{adj}}_C (F|_{\tilde{C}}) \leqpar \mu(E),\]
hence $\beta^*E$ is (semi)stable.

Similarly in the $2$-secant case, write $C' = C'' \cup_{\{x, y\}} \pp^1$.
Denote by $\tilde{x}_1$ and $\tilde{y}_1$ (respectively $\tilde{x}_2$ and~$\tilde{y}_2$)
the corresponding points on $\pp^1$ (respectively $C''$).  Let $F \subset \nu'^*\beta^*E$.
Since $\nu'^*\beta^*E|_{\pp^1}$ is trivial, we can identify the fiber of $E$ at $\tilde{x}_1$ with the fiber of $E$ at $\tilde{y}_1$, and we have
\[\mu(F|_{\pp^1}) \leq -\frac{1}{\rk F} \cdot  \codim_F \left(F|_{\tilde{x}_1}\cap F|_{\tilde{y}_1} \right).\]
As in the \(1\)-secant case, noting that the only difference between \(C'_{\text{sing}}\) and \(C''_{\text{sing}}\) are the points \(\{x,y\}\), for any subbundle $F \subset \nu'^* \beta^* E$, we have
\begin{align*}
\mu^{\text{adj}}_{C'}(F) &= \mu^{\text{adj}}_{C''}(F|_{\tilde{C}}) + \mu(F|_{\pp^1}) - \frac{1}{\rk F} \cdot \bigg(\codim_F \left(F|_{\tilde{x}_1}\cap F|_{\tilde{x}_2} \right) + \codim_F \left(F|_{\tilde{y}_1}\cap F|_{\tilde{y}_2} \right) \bigg) \\
&\leq \mu^{\text{adj}}_{C''}(F|_{\tilde{C}}) - \frac{1}{\rk F} \cdot \bigg(\codim_F \left(F|_{\tilde{x}_1}\cap F|_{\tilde{y}_1}\right) + \codim_F \left(F|_{\tilde{x}_1}\cap F|_{\tilde{x}_2} \right) + \codim_F \left(F|_{\tilde{y}_1}\cap F|_{\tilde{y}_2} \right) \bigg) \\
\intertext{Twice applying the ``triangle inequality'' $\codim (X \cap Y) + \codim (Y \cap Z) \geq \codim (X \cap Z)$,}
&\leq \mu^{\text{adj}}_{C''}(F|_{\tilde{C}}) - \frac{1}{\rk F} \cdot \codim_F \left(F|_{\tilde{x}_2}\cap F|_{\tilde{y}_2} \right) \\
&= \mu^{\text{adj}}_C(F|_{\tilde{C}}) \\
&\leqpar \mu(E). \qedhere
\end{align*}
\end{proof}

\begin{prop} \label{prop:stab-open}
Let $\sC \to \Delta$ be a family of connected nodal curves over
the spectrum of a discrete valuation ring,
and $\sE$ be a vector bundle on $\sC$.
\begin{enumerate}
\item \label{stab-open}
If the special fiber $\sE_0 = \sE|_0$ is (semi)stable,
then the general fiber $\sE^* = \sE|_{\Delta^*}$ is also (semi)stable.
\item \label{ssemi-glob}
If $\sC \to \Delta$ is smooth, and $\sE_0$ is semistable, 
then any subbundle $\sF^* \subset \sE^*$
with $\mu(\sF^*) = \mu(\sE^*)$ extends to a subbundle $\sF \subset \sE$.
\end{enumerate}
\end{prop}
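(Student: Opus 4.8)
The plan is to prove both parts by spreading a subbundle out over the base $\Delta = \Spec R$ (with $R$ a discrete valuation ring, fraction field $K$, and $\Delta^* = \Spec K$ the generic point), extending it as a saturated subsheaf of $\sE$, and then specializing; the entire content lies in controlling the adjusted slope under this specialization.

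For part (\ref{stab-open}) I would argue the contrapositive. Suppose the general fiber $\sE^*$ is not (semi)stable, so that the normalization $\nu'$ of the general fiber carries a subbundle $F^* \subset \nu'^*\sE^*$ with $\mu^{\text{adj}}(F^*) \geqpar \mu(\sE^*)$ (strict in the non-semistable case). The first step is to choose a good integral model: after a ramified base change $\Delta' \to \Delta$ — which changes neither hypothesis nor conclusion, since (semi)stability of a bundle on a fixed curve over an algebraically closed field is unaffected by the ground field, and $\sE_0$ is literally unchanged — together with a blow-up of the total space at the nodes of the central fiber, I may assume $\sC$ is a regular surface. The cost is that the central fiber is replaced by a curve mapping to the original $\sC_0$ by contracting $1$- and $2$-secant $\pp^1$'s (a chain being contracted one component at a time); by Lemma \ref{lem:contract} the pullback of $\sE_0$ to this new central fiber is again (semi)stable, so the hypothesis is preserved.

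With $\sC$ regular, I extend $F^*$: viewing it as a subsheaf of $\sE$ over the generic point and taking the saturation $\sF \subset \sE$, I obtain a subsheaf with torsion-free quotient, hence (reflexive on a regular surface) a subbundle, and $\sE/\sF$ is torsion-free, so $R$-flat; restriction to the central fiber is therefore exact and preserves ranks and degrees. Restricting and pulling back along the normalization $\nu_0$ of the central fiber gives a subsheaf $\nu_0^*\sF_0 \subset \nu_0^*\sE_0$ of the right rank with $\mu(\sF_0) = \mu(F^*)$. The heart of the matter — and the step I expect to be the main obstacle — is the adjusted-slope correction, because normalization does not commute with restriction to the central fiber, so I must analyze the two limiting fibers $\tilde{p}_1, \tilde{p}_2 \in \tilde\sC_0$ over each node $p$. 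At a node that smooths in the family (local equation $xy = t$ on the regular $\sC$), the extension $\sF$ has a single fiber $\sF|_p$, and both $\nu_0^*\sF_0|_{\tilde{p}_1}$ and $\nu_0^*\sF_0|_{\tilde{p}_2}$ are its image; the two subspaces coincide, so the codimension term vanishes. At a node persisting on the general fiber, simultaneous normalization along the corresponding section exhibits $\tilde p_1, \tilde p_2$ as sections, and $\dim(F|_{\tilde p_1} \cap F|_{\tilde p_2})$ is upper semicontinuous, so the codimension term can only drop on the special fiber. In either case the correction moves favorably, and after saturating on the smooth curve $\tilde\sC_0$ to a genuine subbundle $G$ I obtain $\mu^{\text{adj}}(G) \geqpar \mu(\sE_0)$, contradicting the (semi)stability of $\sE_0$. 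The one delicate estimate is matching the length of the saturation quotient against the node corrections so that the degree gained in saturating is not outweighed.

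For part (\ref{ssemi-glob}) the family is smooth, so there are no adjusted-slope corrections and the argument is clean. I again set $\sF \subset \sE$ equal to the saturation of $\sF^*$; as above $\sF$ is a subbundle, $\sE/\sF$ is $R$-flat, and $\sF_0 \subset \sE_0$ is a subsheaf with $\mu(\sF_0) = \mu(\sF^*) = \mu(\sE_0)$. It remains to check that $\sE/\sF$ is locally free, equivalently that $\sF_0$ is saturated in $\sE_0$: if it were not, its saturation on the smooth curve $\sC_0$ would have strictly larger slope than $\mu(\sE_0)$, violating the semistability of $\sE_0$. Hence $\sE/\sF$ restricts to a vector bundle on each fiber and, being $R$-flat over the smooth $\sC \to \Delta$, is itself a vector bundle, so $\sF \subset \sE$ is the desired subbundle extending $\sF^*$. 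Here semistability of the special fiber is exactly what prevents the flat limit of $\sF^*$ from acquiring torsion in its cokernel.
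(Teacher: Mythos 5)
Your part~(\ref{ssemi-glob}) is correct, and it is a genuinely different (Langton-style) argument from the paper's: you saturate the extension on the smooth total space and use semistability of $\sE_0$ to rule out torsion in $(\sE/\sF)|_0$ (if $\sF_0$ were unsaturated, its saturation would strictly destabilize $\sE_0$), whereas the paper extends after blowing up and observes that equality in \eqref{muf} forces $\sF$ to be trivial on every exceptional divisor, so that no blowup was needed. Your route is arguably more standard and self-contained; the paper's has the virtue of recycling the machinery already set up for part~(\ref{stab-open}).

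In part~(\ref{stab-open}), however, there is a genuine gap, and it sits exactly at the step you yourself flag as ``the one delicate estimate.'' Two issues. First, a slip in the setup: when the generic fiber is itself nodal (the proposition allows this), a destabilizing $F^* \subset \nu'^* \sE^*$ is \emph{not} a subsheaf of $\sE^*$, so you cannot saturate it inside $\sE$ on $\sC$; you must first pass to the normalization of the total space, separating branches along the persistent nodes, as the paper does from the outset. This is routine but is not what you wrote. Second, and more seriously: after saturating $\sF_0$ inside $\nu_0^* \sE_0$ to a subbundle $G$, the fibers of $G$ at the points $\tilde{p}_1, \tilde{p}_2$ over a node can differ from the limits of the fibers of $F^*$, so the semicontinuity of $\codim\left(F|_{\tilde{p}_1} \cap F|_{\tilde{p}_2}\right)$ that you invoke does not apply as stated at precisely the points where saturation occurs --- there the map $\sF_0 \to \nu_0^*\sE_0$ has dropped rank and the ``fiber of $\sF_0$'' is not a well-defined subspace. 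What is needed, and missing, is the comparison you allude to but do not prove: a rank drop of $k_i$ at $\tilde{p}_i$ forces torsion of length at least $k_i$ there (degree gain at least $(k_1+k_2)/\rk F$ in slope), while, since each $G|_{\tilde{p}_i}$ contains the image of the fiber of $\sF$, the codimension penalty at that node increases by at most $k_1 + k_2$; summing over nodes, the gain dominates the loss. So your strategy is salvageable, but as written the proof is incomplete at its central point. It is worth noting that this bookkeeping is exactly the difficulty the paper's proof is engineered to avoid: rather than saturating on the central fiber, it blows up the normalized total space further (with semistable reduction keeping the central fiber reduced) until the extension of $\sF^*$ is an honest subbundle, transfers (semi)stability to the modified central fiber via Lemma~\ref{lem:contract} --- which you also use, but only for the resolution at the nodes of $\sC_0$ --- and then applies upper semicontinuity of $\mu^{\text{adj}}$ to a genuine family of subbundles, where no saturation-versus-penalty estimate ever arises.
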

\begin{proof}
Write $\nu \colon \tilde{\sC} \to \sC$ for the normalization.

For part~(\ref{stab-open}), after possibly making a base change,
let $\sF^* \subset \nu^* \sE^*$ be a subbundle
with $\mu(\sF^*)$ maximal.
Since $\mu$ is constant in flat families and $\codim(X \cap Y)$ is lower semicontinuous,
$\mu^{\text{adj}}$ is upper semicontinuous in flat families.
Therefore, if $\sF^*$ extends to a subbundle $\sF \subset \nu^* \sE$, then
\begin{equation} \label{stab-open-easy}
\mu^{\text{adj}}(\sF^*) \leq \mu^{\text{adj}}(\sF_0) \leqpar \mu(\sE_0) = \mu(\sE^*).
\end{equation}
Otherwise,
we make a blowup $\tilde{\beta} \colon \tilde{\sC'} \to \tilde{\sC}$
in order to extend $\sF^* \subset \nu^* \sE^*$ to a subbundle $\sF \subset \tilde{\beta}^* \nu^* \sE$.
By semistable reduction,
we may ensure that the central fiber remains reduced.
By gluing along sections identified under $\nu$, the blowup $\tilde{\beta}$
induces a map $\beta \colon \sC' \to \sC$, which is an isomorphism away from the central fiber, and on the central fiber consists of replacing nodes by $1$- and $2$-secant $\pp^1$'s.
Applying Lemma~\ref{lem:contract}, $\beta^* \sE_0$ is (semi)stable.
Therefore \eqref{stab-open-easy} holds for $\beta^* \sE$.

For part~(\ref{ssemi-glob}), we imitate the above argument to extend
$\sF^*$ to a subbundle of $\beta^* \sE$.
Since $\sC \to \Delta$ is smooth,
$\beta$ can be obtained by iteratively contracting $1$-secant $\pp^1$s.
Since $\mu(\sF^*) = \mu(\sE^*)$ and $\beta^* \sE_0$ is semistable,
we must in particular have equality in equation \eqref{muf} from the proof of Lemma \ref{lem:contract} for every such contraction;
thus, $\sF$ is trivial along every exceptional divisor of $\beta$.
In particular, $\sF^*$ already extends to a subbundle $\sF \subset \sE$
without blowing up.
\end{proof}

\subsection{Elementary modifications of vector bundles}

Let $E$ be a vector bundle on a scheme $X$ and let $F \subset E$ be a subbundle For any effective Cartier divisor $D \subset X$, we define the \defi{elementary modification of $E$ at $D$ towards $F$} to be the kernel of the natural evaluation map
\[ E[D \to F] \colonequals \ker \left( E \to (E/F)|_D \right). \]
By \cite[Proposition~2.6]{aly}, $E[D \to F]$ is again a vector bundle, which is a subsheaf of \(E\).  

\begin{rem}\label{rem:factoring}
From this definition we see that an inclusion \(S \hookrightarrow E\) factors through \(E[D \to F]\) if and only if the restriction to \(D\) factors through \(F|_D\):
\[S|_D \hookrightarrow F|_D \hookrightarrow E|_D.\]
\end{rem}

\begin{rem}  Elementary modifications have a nice geometric interpretation in terms of projective bundles.  Suppose that \(E\) is a vector bundle of rank \(2\) on a smooth curve \(C\) and \(F\) is a line subbundle of \(E\).  In this case \(\pp F\) is a section of the \(\pp^1\)-bundle \(\pp E\) over \(C\).  The surface \(\pp E[p \to F]\) is obtained from \(\pp E\) by blowing up the point \(\pp F_p\) and blowing down the proper transform of the fiber \(\pp E_p\).  For more details see \cite[\S{}III.24]{beauville}.
\end{rem}

In the special case that \(F\) is a direct summand of \(E\), write
\(E \simeq F \oplus E'\).  Then
we see that \(E/F \simeq E'\), and so we have
\begin{equation}\label{eq:direct_sum}
E[D \to F] =  \ker \left( F \oplus E' \to E'|_D \right)  \simeq F \oplus E'(-D).\end{equation}
More generally, we can describe how elementary modifications play with respect to short exact sequences.  For simplicity we focus here on the rank \(2\) case that is of interest in this paper.  Suppose that \(C\) is a curve, \(S\) and \(Q\) are line bundles on \(C\) and \(E\) is a rank \(2\) bundle on \(C\) that sits in the exact sequence
\[0 \to S \to E \to Q \to 0.\]
Let \(p\) be a smooth point on \(C\).
Consider a line subbundle \(F\) of \(E\) and write \(k'\) for the order to which the fibers of \(S\) and \(F\) agree over \(p\) (i.e., the length of the support of \(\pp F \cap \pp S\) in \(\pp E\) in a neighborhood of \(p\)).  Let \(k\) be the minimum of \(k'\) and \(n\).
 Then we claim that the modification \(E[np \to F]\) sits in the exact sequence
 \begin{equation}\label{modifications_ses}
 0 \to S((k-n)p) \to E[np \to F] \to Q(-kp) \to 0.
 \end{equation}
This follows from  combining the observation that \(S((k-n)p) \hookrightarrow E[np \to F]\) is saturated with a Chern class computation to determine the twist at \(p\) in the quotient.
For a more detailed exposition on elementary modifications, we refer the reader to \cite[\S2--3]{aly}.

Let $q \in \pp^r$ be a point.
In this paper we will be primarily concerned with modifications of the normal bundle $N_{C/\pp^r}$ towards pointing bundles $N_{C \to q}$, which we now recall.  For a more detailed exposition see \cite[\S5--6]{aly}.  Write 
\[U_{C, q} = \{p \in C : T_pC \cap q = \emptyset\}.\]
Let $\pi_q \colon C \to \pp^{r-1}$ denote the projection map from $q$.
Note that $\pi_q|_{U_{C, q}}$ is unramified by construction.
If $U_{C, q}$ is dense in $C$ and contains the singular locus of $C$, then we may define $N_{C \to q}$ to be the unique extension to all of $C$ of the bundle
\[N_{C \to q}|_{U_{C, q}} \colonequals \ker \left( N_C|_{U_{C, q}} \to N_{\pi_q}|_{U_{C, q}} \right),\]
where $N_{\pi_q}$ denotes the normal sheaf of $\pi_q$. 
Our notation $N_{C \to q}$ is intended to suggest the geometry of sections: they point towards $q$ in $\pp^r$.
Projection from \(q\) induces an exact sequence
\begin{equation}\label{eq:proj_exact_seq}
0 \to N_{C \to q} \to N_C \to \pi_q^*N_{\pi_q}(C \cap q) \to 0.
\end{equation}
In this paper we will be primarily interested in the two simplest cases
\begin{enumerate}[(i)]
\item The point \(q \in \pp^r\) is general so that \(U_{C,q} = C\), and \(N_{C \to q} \simeq \O_C(1)\) by \cite[Proposition~6.2]{aly}, 
\item The point \(q\in C\) is general so that \(U_{C,q} = C \smallsetminus \{q\}\), and \(N_{C \to q} \simeq \O_C(1)(2q)\) by \cite[Proposition~6.3]{aly}.
\end{enumerate}
By convention, when modifying towards a pointing bundle, we will write
\[N_C[p \to q] \colonequals N_C[p \to N_{C \to q}].\]
The following foundational result of Hartshorne-Hirschowitz underpins our degenerative approach.

\begin{lem}[{\cite[Corollary~3.2]{HH}}]\label{lem:hh}
Let $X \cup Y$ be a connected nodal curve in $\pp^r$. Write $\{p_1, \dots, p_n\} = X \cap Y$ and let $q_i \in T_{p_i}Y$ be a choice of point.  Then
\[N_{X \cup Y}|_X \simeq N_X(p_1 + \cdots + p_n)[p_1 \to q_1] \cdots [p_n \to q_n].\]
\end{lem}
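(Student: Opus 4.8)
The plan is to reduce the assertion to a purely local statement at the finitely many nodes $p_1, \dots, p_n$ and compute there directly. Set $Z \colonequals X \cup Y$, so that the claim concerns $N_Z|_X$ (since $Z$ is nodal, hence a local complete intersection, $N_Z$ is a genuine vector bundle). Away from the nodes, $Z$ agrees with $X$, so both $N_Z|_X$ and the right-hand side are \emph{canonically} identified with $N_X$ on the open set $U \colonequals X \setminus \{p_1, \dots, p_n\}$; likewise the twist $N_X(\sum p_i)$ and each modification $[p_i \to q_i]$ alter $N_X$ only over the $p_i$. My plan is therefore to realize both $N_Z|_X$ and the bundle $N_X(\sum p_i)[p_1\to q_1]\cdots[p_n\to q_n]$ as $\O_X$-submodules of the sheaf $\mathcal{W} \colonequals N_X \otimes_{\O_X} \mathcal{K}_X$ of meromorphic normal sections, and to check that these two submodules coincide. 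Since they already agree on $U$, this is a local question at each $p_i$.

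First I would construct the embedding of $N_Z|_X$ into $\mathcal{W}$. The inclusion of ideal sheaves $I_Z = I_X \cap I_Y \subseteq I_X$ gives, via the conormal sequence of $X \subset Z \subset \pp^r$,
\[ (I_Z/I_Z^2)\otimes_{\O_Z} \O_X \longrightarrow I_X/I_X^2 \longrightarrow I_{X/Z}/I_{X/Z}^2 \longrightarrow 0, \]
a dual map $\phi \colon N_X \to N_Z|_X$ (apply $\Hom(-, \O_X)$). The conormal sheaf $I_{X/Z}/I_{X/Z}^2$ of the component $X$ in $Z$ is a torsion $\O_X$-module supported on the nodes, so its dual into the torsion-free $\O_X$ vanishes; hence $\phi$ is injective, and over $U$ it is the canonical identification. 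Thus $\phi$ identifies $N_X$ with a subsheaf of $N_Z|_X$ and, through the isomorphism $N_Z|_X \otimes \mathcal{K}_X \simeq \mathcal{W}$, realizes $N_Z|_X$ as an overbundle of $N_X$ inside $\mathcal{W}$ with torsion quotient supported on the nodes. The right-hand side is visibly such an overbundle too, via $N_X \subseteq N_X(\sum p_i)[\cdots] \subseteq N_X(\sum p_i) \subseteq \mathcal{W}$.

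The heart of the argument is the local computation at a single node $p = p_i$. I would choose affine coordinates $x_1, \dots, x_r$ centered at $p$ so that $X$ is the $x_1$-axis, $I_X = (x_2, \dots, x_r)$, and $T_pY$ is the $x_2$-axis; transversality makes this possible, and then $N_{X \to q_i}|_p$ is spanned by $\partial/\partial x_2$, the image of $T_pY$ in $N_X|_p = T_p\pp^r/T_pX$. Using $I_Y \equiv (x_1, x_3, \dots, x_r) \pmod{\mathfrak{m}_p^2}$, a direct calculation shows $I_Z = I_X \cap I_Y$ is locally generated by $x_3, \dots, x_r$ together with one function $h$ with leading term $x_1 x_2$. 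Writing $t = x_1$ for the coordinate on $X$, the comparison map $(I_Z/I_Z^2)|_X \to I_X/I_X^2$ becomes, after row and column operations over $\O_{X,p}$, the diagonal map $\mathrm{diag}(t, 1, \dots, 1)$; dualizing shows that $\phi(N_X) \subseteq N_Z|_X$ is, inside $\mathcal{W}$, exactly $N_X$ with a simple pole at $p$ in the $\partial/\partial x_2$-direction. This is precisely the local description of $N_X(p)[p \to q_i]$, so the two submodules of $\mathcal{W}$ agree at $p$; gluing over all nodes yields the isomorphism.

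I expect the main obstacle to be this local computation, and in particular verifying that $N_Z|_X$ depends on $Y$ only through the tangent lines $T_{p_i}Y$ (equivalently, the $q_i$) and not through higher-order data. This is exactly what makes the determinant of the comparison map vanish to order one in the single direction $T_pY \bmod T_pX$: the higher-order terms of $h$ and the off-diagonal entries are of lower order and are absorbed by an $\O_{X,p}$-linear change of basis (Smith normal form over the local ring), leaving the saturated overbundle and the pole direction unchanged. A secondary point of care is the convention that in $N_X(\sum p_i)[p_i \to q_i]$ the modification is taken toward the saturation of $N_{X \to q_i}$ inside $N_X(\sum p_i)$, which is what makes $N_X(p)[p\to q_i]$ equal ``$N_X$ with a simple pole in the $q_i$-direction'' rather than introducing a skyscraper quotient. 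Once these points are settled, the global isomorphism follows by gluing the local identifications along $U$, where all sheaves in sight are canonically $N_X$.
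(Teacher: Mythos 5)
The paper itself gives no proof of this lemma --- it is quoted directly from Hartshorne--Hirschowitz \cite[Corollary~3.2]{HH} --- and your argument is correct and is essentially the standard proof underlying that result: dualizing the conormal comparison map for $X \subset X \cup Y \subset \pp^r$ embeds $N_X$ into $N_{X \cup Y}|_X$ with torsion-free quotient of length one at each node, and the local computation $I_{X \cup Y} = (x_1 x_2, x_3, \dots, x_r)$, giving the matrix $\operatorname{diag}(t, 1, \dots, 1)$, identifies the polar direction at $p_i$ with the image of $T_{p_i}Y$ in $N_X|_{p_i}$, i.e.\ with $N_{X \to q_i}|_{p_i}$. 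Your two flagged caveats are exactly the right points of care: the modification of the twisted bundle must be taken toward the induced (saturated) sub-line-bundle $N_{X \to q_i}(\textstyle\sum p_j)$, and the reduction of a general branch $Y$ to the coordinate-axis model is cleanest via a local (formal or \'etale) automorphism fixing $X$ pointwise with identity derivative at $p_i$, which shows $N_{X\cup Y}|_X$ depends on $Y$ only through its tangent lines at the nodes.
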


Given a vector bundle \(E\) on a reducible nodal curve \(X \cup Y\), restriction to the component \(X\) yields an exact sequence
\begin{equation}\label{eq:vb_rest}
 0 \to E|_Y(-X \cap Y) \to E \to E|_X \to 0.\end{equation}
If the vector bundle \(E\) is the normal bundle of the union \(N_{X \cup Y}\), then we can make this explicit using
Lemma \ref{lem:hh}.  Write \(q_{i,Y}\) for a choice of point in \(T_{p_i}Y\) and \(q_{i,X}\) for a choice of point in \(T_{p_i}X\).  Then \eqref{eq:vb_rest} yields the sequence
\begin{equation}\label{eq:rest_comp}
0 \to N_Y[p_1 \to q_{1,X}]\cdots [p_n \to q_{n,X}] \to N_{X \cup Y} \to N_X(p_1 + \cdots + p_n)[p_1 \to q_{1,Y}] \cdots [p_n \to q_{n,Y}] \to 0.\end{equation}

We will now illustrate how information about the normal bundle (such as presentations in exact sequences) can be combined with the data of modifications towards pointing bundles with the following three examples.  In future similar situations, we will point the reader to these examples and omit the details.

\begin{example}\label{ex:pointing_sub}
For a curve \(C \subset \pp^3\) and general points \(p, q \in C\),
consider the modified normal bundle \(N_C[p \to q]\).
Combining the sequence \eqref{eq:proj_exact_seq} coming from projection from \(q\) with \eqref{modifications_ses}, we see that this bundle sits in the exact sequence
\[0 \to \left[N_{C \to q} \simeq \O_C(1)(2q)\right] \to N_C[p \to q] \to \pi_q^* N_{\pi_q}(q-p) \to 0.\]
\end{example}

\begin{example}\label{ex:line}
Consider a line \(L \subset \pp^3\), points \(p_1, p_2 \in L\) and \(p_1', p_2' \in \pp^3\), such that the four points \(p_1, p_1', p_2, p_2'\) span \(\pp^3\).  Write \(H_i\) for the span of \(L\) and the point \(p_i'\).  The line \(L\) is the complete intersection of \(H_1\) and \(H_2\), and so
\[N_L \simeq N_{L/H_1} \oplus N_{L/H_2} \simeq \O_{\pp^1}(1) \oplus \O_{\pp^1}(1).\]
Furthermore, the pointing bundle \(N_{L \to p_i'}\) is isomorphic to \(N_{L/H_i}\) from which we conclude, using \eqref{eq:direct_sum}, that
\begin{align*}
N_L(p_1)[p_1 \to p_1'] & \simeq N_{L/H_1}(p_1) \oplus N_{L/H_2} \simeq \O_{\pp^1}(2) \oplus \O_{\pp^1}(1),\\
N_L(p_1 + p_2)[p_1 \to p_1'][p_2 \to p_2'] &\simeq N_{L/H_1}(p_1) \oplus N_{L/H_2}(p_2) \simeq \O_{\pp^1}(2) \oplus \O_{\pp^1}(2).
\end{align*}
\end{example}

\begin{example}\label{ex:enc}
Suppose that \(D\) is an elliptic normal curve (i.e., the complete intersection of two quadrics in \(\pp^3\)) and let \(p\) and \(q\) be general points on \(D\).  We will consider the modified normal bundle \(N_D[p \to q]\).  Since it is one condition for a quadric containing \(p\) and \(q\) to contain the line \(\bar{p,q}\), there must be a quadric \(Q\) in the pencil containing \(D\) that also contains the line \(\bar{p,q}\).  The line \(\bar{p,q}\) is contained in \(T_pQ\) and therefore the normal directions \(N_{D/Q}|_p\) and \(N_{D \to q}|_p\) coincide in \(N_D|_p\).  Combining the normal bundle exact sequence \eqref{eq:normal_bundle_exact_seq} for \(D \subset Q\) with \eqref{modifications_ses} yields the exact sequence of vector bundles
\[0 \to N_{D/Q} \to N_D[p \to q] \to N_Q|_D(-p) \to 0.\]
In fact, this exact sequence is split; choosing another quadric \(Q'\) from the pencil defining \(D\) gives the complement \(N_{D/Q'}(-p)\) to \(N_{D/Q}\).
Let \(L = \bar{p,q}\) be the \(2\)-secant line to \(D\) joining \(p\) and \(q\).  Combining the above discussion with Lemma \ref{lem:hh}, we see that
\[ N_{D \cup L}|_D \simeq N_D(p+q)[p \to q][q \to p] \simeq N_{D/Q}(p+q) \oplus N_{D/Q'}.\]
\end{example}

More generally, to deal with other curves on quadric surfaces and multiple modifications \([np \to q]\) 
in the course of our degenerations, we will make use of the following lemma, which computes that \(k=1\) in applying \eqref{modifications_ses}
to the normal bundle exact sequence.

\begin{lem}\label{lem:3pts}
Let $D$ be a (smooth) curve of type $(a,b)$ on a smooth quadric surface $Q$.
If $q$ is a general point of $D$, then
inside $\pp N_D$, the two sections coming from the line subbundles $N_{D \to q}$ and $N_{D / Q}$ meet transversely at $a+b-2$ points. 
\end{lem}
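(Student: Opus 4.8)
The plan is to realize both sections as sub-line-bundles of $N_D$ and to measure their coincidence as the vanishing of a single map of line bundles. Recall the exact sequence
\[
0 \to N_{D/Q} \to N_D \to N_{Q/\pp^3}|_D \to 0,
\]
and let $\psi$ be the composite $N_{D\to q}\hookrightarrow N_D \twoheadrightarrow N_{Q/\pp^3}|_D$. Inside $\pp N_D$, the sections determined by $N_{D\to q}$ and $N_{D/Q}$ agree at a point $p$ precisely when $N_{D\to q}|_p = N_{D/Q}|_p$ as lines in $N_D|_p$, i.e.\ precisely when $\psi(p)=0$; moreover the local intersection multiplicity of the two sections at $p$ equals $\ord_p \psi$, so they meet transversely exactly where $\psi$ vanishes simply. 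Since the two subbundles differ at a general point, $\psi$ is not identically zero, and it is a section of the line bundle $N_{Q/\pp^3}|_D \otimes N_{D\to q}^\vee$; hence its total number of zeros counted with multiplicity is
\[
\deg N_{Q/\pp^3}|_D - \deg N_{D\to q} = 2(a+b) - (a+b+2) = a+b-2,
\]
using $N_{Q/\pp^3}=\O_Q(2)$ and the degree $\deg N_{D\to q}=a+b+2$ discussed below. It therefore suffices to exhibit $a+b-2$ distinct zeros of $\psi$.

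Next I would identify the zeros of $\psi$ geometrically. Since $T_pD \subset T_pQ$, the line $N_{D/Q}|_p$ is the image in $N_D|_p$ of any direction in $T_pQ$ transverse to $D$, while $N_{D\to q}|_p$ is the image of the tangent direction at $p$ to the secant line $\overline{pq}$. These coincide if and only if $T_p\overline{pq}\subset T_pQ$, i.e.\ if and only if the line $\overline{pq}$ is tangent to $Q$ at $p$; as $p\in Q$, this holds exactly when $q\in T_pQ$, that is, when $q\in D\cap(T_pQ\cap Q)$. Because $Q$ is smooth, $T_pQ\cap Q$ is the union of the two ruling lines through $p$, so the condition is precisely that $p$ and $q$ lie on a common line of $Q$. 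For a general $q\in D$, the two rulings through $q$ meet $D$ in $q$ together with $a-1$ and $b-1$ further points respectively (since the rulings meet $D$ in $a$ and $b$ points), and for general $q$ these $a+b-2$ points are distinct, distinct from $q$, and meet the relevant ruling transversely, so each is a genuine zero of $\psi$.

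Finally I would combine the two counts: $\psi$ has at least the $a+b-2$ distinct zeros just produced, yet its total vanishing order is exactly $a+b-2$; hence these are all of its zeros and each is simple. Therefore the two sections meet transversely at exactly these $a+b-2$ points, as claimed.

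The main obstacle is the input $\deg N_{D\to q}=a+b+2$ for $q$ a point \emph{on} $D$ (one more than the genus-agnostic guess, and the source of the ``$-2$''). One clean way to obtain it is to recall the classical fact $N_{D\to q'}\cong \O_D(1)$ for a general $q'\notin D$ and to analyze the limit as $q'\to q\in D$: the saturated limit of the corresponding sub-line-bundles gains exactly $\O_D(2q)$, raising the degree from $a+b$ to $a+b+2$; equivalently one may identify $N_{D\to q}\cong N_{D/S}$ for $S$ the cone over $D$ with vertex $q$ and compute the self-intersection there. A secondary point needing the genericity of $q$ is to rule out an accidental coincidence at $p=q$ itself, where $N_{D\to q}$ is defined only by extension across $q$, and to ensure the $a+b-2$ ruling points are reduced; both hold for general $q$. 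If one prefers to avoid the degree input entirely, the geometric characterization above is an ``if and only if'', so one could instead verify transversality at each of the $a+b-2$ points by a direct first-order computation of how the secant direction to $q$ separates from the $Q$-normal direction as $p$ moves along $D$.
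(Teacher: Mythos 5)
Your proof is correct and is essentially the paper's argument: you identify the coincidence locus by the same criterion ($q \in T_pQ$, i.e.\ $p$ lies on one of the two rulings through $q$, giving $a+b-2$ points for $q$ general), and your count of zeros of $\psi \in H^0\bigl(N_Q|_D \otimes N_{D\to q}^\vee\bigr)$ is just a repackaging of the paper's intersection number $c_1(N_D) - c_1(N_{D\to q}) - c_1(N_{D/Q}) = a+b-2$, with the same final step of matching the set-theoretic and multiplicity counts to force transversality. The paper likewise takes $\deg N_{D\to q} = a+b+2$ as an input (it follows from $N_{D \to q} \simeq \O_D(1)(2q)$ for $q \in D$, per \cite{aly}), so your discussion of that point introduces no gap.
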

\begin{proof}
The fibers of $N_{D \to q}$ and $N_{D/Q}$ agree at $p$ if and only if $q$ is contained in $T_pQ$.  This occurs exactly at the points $p$ where the two lines through $q$ in $Q$ meet $D$.  Since $D$ is of type $(a,b)$ on $Q$, for $q$ general this happens at $a + b - 2$ points of $D$.

On the other hand, with multiplicity, the intersection number of these two sections is
\[c_1(N_D) - c_1(N_{D \to q}) - c_1(N_{D/Q}) = (2ab + 2a + 2b) - (a+ b +2) - (2ab) = a+b -2.\]
Therefore, when $q$ is general, these sections intersect transversely at exactly $a+ b -2$ points.
\end{proof}

It is a classical fact that the normal bundle of a rational normal (i.e.\ $(d, g) = (3, 0)$) or elliptic normal
(i.e.\ $(d, g) = (4, 1)$) curve is semistable,
which we record in the following lemma:

\begin{lem} \label{lem:01}
Let $C$ be a general Brill--Noether curve of degree $d$ and genus $g$,
where $(d, g) = (3, 0)$ or $(4, 1)$.
Then $N_C$ is semistable.
\end{lem}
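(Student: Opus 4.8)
The plan is to exhibit, in each of the two cases, the normal bundle $N_C$ explicitly as a direct sum of two line bundles of the same degree $\mu(N_C)$, and then deduce semistability from the elementary observation that any line subbundle $L \subset N_C$ maps nontrivially to one of the two summands (or else is contained in the other), which bounds $\deg L \leq \mu(N_C)$. Since $\rk N_C = 2$ and $\mu(N_C) = 2d + g - 1$ (equal to $5$ when $(d,g) = (3,0)$ and to $8$ when $(d,g) = (4,1)$), this is exactly the required inequality for all line subbundles. In both cases the curve is projectively rigid --- the twisted cubic is unique up to $\PGL_4$, and a degree-$4$ genus-$1$ Brill--Noether curve is the elliptic normal curve embedded by a complete linear system --- so ``general'' only means ``smooth and nondegenerate,'' and we may work with any convenient model.

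For $(d,g) = (4,1)$, the embedding is by a degree-$4$ line bundle $L$ on the elliptic curve, and Riemann--Roch gives $h^0(L) = 4$ and $h^0(L^{\otimes 2}) = 8$. As elliptic normal curves are projectively normal in every characteristic, $h^0(\I_C(2)) = h^0(\O_{\pp^3}(2)) - h^0(\O_C(2)) = 10 - 8 = 2$, so $C$ lies on a pencil of quadrics; comparing degrees shows that $C$ is the complete intersection of two of them. The normal bundle of a complete intersection splits as $N_C \cong \O_C(2) \oplus \O_C(2)$, a sum of two line bundles of degree $2d = 8 = \mu(N_C)$, and the projection argument above yields semistability.

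For $(d,g) = (3,0)$ we have $C \cong \pp^1$ with $\O_C(1) = \O_{\pp^1}(3)$, so $N_C$ is a rank-$2$ bundle of degree $10$ on $\pp^1$. Semistability is then equivalent to the single vanishing $H^0(N_C(-6)) = 0$: a line subbundle of degree at least $6$ is the same as a nonzero section of $N_C(-6)$, and degree $6 > 5 = \mu(N_C)$ is the only way to destabilize. I would establish this from the defining sequences, twisting $0 \to T_C \to T_{\pp^3}|_C \to N_C \to 0$ by $\O_{\pp^1}(-6)$ and using the Euler sequence \eqref{euler_seq} (also twisted): one first checks $H^0(T_{\pp^3}|_C(-6)) = 0$, after which the long exact sequence identifies $H^0(N_C(-6))$ with the kernel of a map $H^1(T_C(-6)) \to H^1(T_{\pp^3}|_C(-6))$, which one verifies is injective. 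This forces $N_C \cong \O_{\pp^1}(5)^{\oplus 2}$. Equivalently, placing $C$ as a curve of type $(1,2)$ on a smooth quadric $Q$ gives $0 \to N_{C/Q} \to N_C \to N_{Q/\pp^3}|_C \to 0$, i.e.\ $0 \to \O_{\pp^1}(4) \to N_C \to \O_{\pp^1}(6) \to 0$ (degrees $2ab = 4$ and $2d = 6$); since $\operatorname{Ext}^1(\O_{\pp^1}(6), \O_{\pp^1}(4)) = H^1(\O_{\pp^1}(-2))$ is one-dimensional, $N_C$ is either $\O(4) \oplus \O(6)$ or $\O(5)^{\oplus 2}$, and the vanishing $H^0(N_C(-6)) = 0$ rules out the former.

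The only real point requiring care is characteristic-independence, in particular characteristics $2$ and $3$. For $g = 0$ this is harmless because $d = 3$ is odd, so the obstruction of Lemma~\ref{lem:char2_obs} does not apply, and the cohomology of the twisted sequences above is insensitive to the characteristic; for $g = 1$ it rests only on projective normality of the elliptic normal quartic and on the splitting of the normal bundle of a complete intersection, both of which hold over any algebraically closed field. I expect the main (though modest) obstacle to be the bookkeeping of the twisted cohomology groups in the $g = 0$ computation --- specifically confirming the injectivity of the $H^1$ map --- to ensure no stray section of $N_C(-6)$ survives.
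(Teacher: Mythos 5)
Your proof is correct; the $(4,1)$ case is exactly the paper's argument (complete intersection of two quadrics, so $N_C \simeq \O_C(2)^{\oplus 2}$), but for $(3,0)$ you take a genuinely different route. The paper projects from a point $p \in C$ to a conic $\bar{C} \subset \pp^2$ and uses the pointing-bundle sequence $0 \to N_{C \to p} \to N_C \to N_{\bar{C}}(p) \to 0$, in which both outer terms are $\O_{\pp^1}(5)$; this exhibits $N_C$ as an extension of two line bundles of slope $\mu(N_C) = 5$ and gives semistability with no cohomology computation at all, at the cost of invoking the pointing-bundle formalism of \cite{aly}. Your route --- reducing semistability on $\pp^1$ to the single vanishing $H^0(N_C(-6)) = 0$ and extracting it from the twisted Euler and normal bundle sequences --- is more elementary and self-contained, and the one step you deferred does close, uniformly in the characteristic. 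Indeed, $H^0(T_{\pp^3}|_C(-6)) = 0$ because the dual map $H^0(\O_{\pp^1}(1))^{\oplus 4} \to H^0(\O_{\pp^1}(4))$, $(f_i) \mapsto \sum f_i m_i$ with $(m_0, \dots, m_3) = (s^3, s^2t, st^2, t^3)$, is visibly surjective with unit coefficients; and the injectivity of $H^1(T_C(-6)) \to H^1(T_{\pp^3}|_C(-6))$ is Serre-dual to the surjectivity of $H^0(\Omega_{\pp^3}|_C(4)) \to H^0(\Omega_C(4))$, where the source is spanned by the Koszul syzygies $(t,-s,0,0)$, $(0,t,-s,0)$, $(0,0,t,-s)$ of the $m_i$; in the chart $s = 1$ these pull back to $-dt$, $(1-2)\,t\,dt$, and $(2-3)\,t^2\,dt$, all with coefficient $-1$, so the map is surjective over any field and $N_C \simeq \O_{\pp^1}(5)^{\oplus 2}$. (In your quadric packaging as a $(1,2)$ curve, note that in characteristic $2$ you could alternatively rule out the split extension $\O_{\pp^1}(4) \oplus \O_{\pp^1}(6)$ by Lemma \ref{lem:char2_obs}, since $d = 3$ forces odd splitting degrees.) The trade-off: the paper's projection argument is shorter and structural, while yours makes the characteristic-independence completely explicit at the price of the coordinate bookkeeping you anticipated.
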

\begin{proof} For $(d, g) = (3, 0)$,
let $p$ be a point on $C$, and write $\bar{C} \subset \pp^2$
for the image of $C$ under projection from $p$
(which is a conic). Then the semistability of $N_C$ follows from the
exact sequence
\[0 \to [N_{C \to p} \simeq \O_{\pp^1}(5)] \to N_C \to [N_{\bar{C}}(p) \simeq \O_{\pp^1}(5)] \to 0.\]

For $(d, g) = (4, 1)$, we note that $C$ is the complete intersection of two quadrics;
hence $N_C \simeq \O_C(2) \oplus \O_C(2)$ is semistable.
\end{proof}

\subsection{Modifications in families}\label{sec:mod_in_fam}
While arguing by degeneration, we will need the following technical
result, explained in Remark~3.4 of~\cite{aly}.
Suppose that we have a modification of a rank \(2\) vector bundle \(E\) towards two sub line bundles \(F_1\) and \(F_2\):
\[E[p_1 \to F_1][p_2 \to F_2].\]
Let \(p_1\) and \(p_2\) limit to a common point \(p\)
in a degeneration parameterized by a base \(B\).
More precisely, let \(p_1\) and \(p_2\) be sections
of \(C \times B \to B\) that intersect at \(p\) in the central fiber.
Write \(\pi \colon C \times B \to C\) for the projection, and
let \(F_1\) and \(F_2\) be subbundles of \(\pi^* E\).
\begin{itemize}
\item If \(F_1 = F_2 = F\), then the limit is \(E[2p \to F]\).
\item If \(F_1|_p\) and \(F_2|_p\) are linearly independent, then the limit
is \(E(-p)\).
\end{itemize}
This can be seen by constructing the modification
\((\pi^*E)[p_1 \to F_1][p_2 \to F_2]\) as a vector bundle on \(C \times B\),
and using that such modifications respect pullback \cite[Proposition 2.8]{aly}.
Sections~2 and~3 of \cite{aly} discuss this setup in much greater generality
for modifications that are treelike,
a condition that generalizes the assumption
that \(F_1 = F_2 = F\) or \(F_1|_p\) and \(F_2|_p\) are linearly independent.

We illustrate this here with an example.
Let \(C \subset \pp^3\)
be a curve, and \(p, q, u, v \in C\) be general points.
We consider the modified
normal bundle
\[N_C[p \to q][q \to p][u \to v][v \to u].\]
As we limit \(v\) to \(q\), the flat limit of these bundles is
\[N_C[p \to q][u \to q](-q).\]
If we further limit \(u\) to \(p\), then the flat limit is
\[N_C[2p \to q](-q).\]
(Note that this is not symmetric in \(p\) and \(q\); it depends on the
order of the limits.)

\subsection{Deformation theory of reducible curves}
In this section, we collect some basic facts about deformations
of reducible curves that we will need. For additional details,
the reader may consult a textbook on deformation theory
such as \cite{Hartshorne, Sernesi}.

Let \(C \subset \pp^r\) be any local complete intersection curve.
Write \(N_C = N_{C / \pp^r} = N_{C \hookrightarrow \pp^r}\) for the normal
bundle of \(C\), or equivalently for the normal sheaf of the inclusion
\(C \hookrightarrow \pp^r\). Then first-order deformations of \(C\)
are parameterized by \(H^0(N_C)\), and obstructions to lifting deformations
lie in \(H^1(N_C)\).

Now suppose that \(C\) is nodal, and write \(p\) for a node of \(C\).
We consider deformations of \(C\) that remain \defi{equisingular} at \(p\).
Equivalently, write \(\pi \colon \tilde{C} \to C\) for the partial normalization of \(C\) at \(p\),
and \(p_1\) and \(p_2\) for the points lying over \(p\) in \(\tilde{C}\).
Then equisingular deformations of \(C\) are deformations of the pointed
map \((\tilde{C}, p_1, p_2) \to \pp^r\) such that the deformations of
\(p_1\) and \(p_2\) map to the same point.
Such equisingular deformations are controlled by
a certain sheaf \(N\) on \(C\), that can be constructed in two ways:
\begin{enumerate}
\item 
The sheaf \(N\) can be constructed as the kernel of the natural map from \(N_C\)
to the deformation space \(T^1_{p^\infty}\) of the formal neighborhood
\(p^\infty\) of \(p\) in \(C\),
i.e.\ in symbols:
\[N = \ker(N_C \to T^1_{p^\infty}).\]
\item 
Alternatively, we can push forward the normal sheaf \(N_{\tilde{C} \to \pp^r}\)
of the map \(\tilde{C} \to \pp^r\) along the map \(\pi\). Evaluation at \(p_1\) and \(p_2\)
then defines a map
\[\pi_* N_{\tilde{C} \to \pp^r} \to \frac{T_p \pp^r}{T_{p_1} \tilde{C}} \oplus \frac{T_p \pp^r}{T_{p_2} \tilde{C}} \to \left(\frac{T_p \pp^r}{T_p C}\right)^2;\]
the sheaf \(N\) is then the preimage of the diagonal in \(\pi_* N_{\tilde{C} \to \pp^r}\).
\end{enumerate}
Then first-order deformations of \(C\) that fail to smooth the node \(p\) correspond
to \(H^0(N)\), and obstructions to lifting such deformations lie in \(H^1(N)\).

\begin{rem}
Away from \(p\), there is a natural isomorphism
between \(N\) and \(N_C\). Working locally,
a similar construction can be done for any subset of the nodes of \(C\).
Using the set of all nodes (so the deformations are equisingular
at all nodes rather than simply equisingular at \(p\)),
this construction appears in Section~4.7.1 of \cite{Sernesi}, in which
it is referred to as the \defi{equisingular normal sheaf}.
\end{rem}

Now suppose that \(C = X \cup Y\) is a reducible nodal curve,
with two smooth components \(X\) and \(Y\), and that \(p \in X \cap Y\)
is a node. Then the restrictions of \(N_C = N_{X \cup Y}\)
to \(X\) and \(Y\) are given in Lemma~\ref{lem:hh}.
Moreover, since \(N_{X \cup Y}\)
is a vector bundle on \(X \cup Y\), restriction to a component
defines an exact sequence (c.f.\ \eqref{eq:rest_comp}):
\[0 \to N_{X \cup Y}|_Y (-X \cap Y) \to N_{X \cup Y} \to N_{X \cup Y}|_X \to 0.\]
From either description given above for the sheaf \(N\),
we deduce analogous statements for \(N\). Namely,
\(N|_X\) is the modification of \(N_X\) where all modifications
appearing in Lemma~\ref{lem:hh} are performed
\emph{except} the one at \(p\).
The sequence for restriction to \(X\) takes a slightly different form
(since \(N\) is not a vector bundle in a neighborhood of \(p\)),
where the subbundle appearing in the sequence involves the restriction of
the ordinary normal bundle to \(Y\) (rather than the restriction of \(N\) to \(Y\)):
\begin{equation} \label{N-res}
0 \to N_{X \cup Y}|_Y (-X \cap Y) \to N \to N|_X \to 0.
\end{equation}

\subsection{Reducible Brill--Noether curves}\label{sec:defs}
In this section we show that the basic degenerations we will employ in the proof of Theorem \ref{thm:main} are in the Brill--Noether component of the Hilbert scheme.

We say that two curves $X$ and $Y$ meet \defi{quasi-transversely} at a set of points $\Gamma \subset \pp^r$ if for each $p \in \Gamma$, the tangent lines $T_pX$ and $T_p Y$ meet only in the isolated point $p$.  (If $r \geq 3$, two curves never meet transversely!)
The following Lemma is a special case of results of \cite{rbn},
but we include a characteristic-independent proof of this special case.

\begin{lem}\label{lem:rightcomp} Let $C$ be a general Brill--Noether curve of degree $d$ and genus $g$ and let $R$ be one of the following 
\begin{enumerate}[(i)]
\item\label{lem:rightcomp_1sec} a $1$-secant line meeting $C$ quasi-transversely at $p$,
\item\label{lem:rightcomp_2sec} a $2$-secant line meeting $C$ quasi-transversely at $p$ and $q$,
\item\label{lem:rightcomp_4sec} a $4$-secant conic meeting $C$ quasi-transversely at four coplanar points $p_1, \dots, p_4$.
\end{enumerate}
In case \eqref{lem:rightcomp_4sec} suppose that \(\rho(g,r,d) \geq 1\).
Then $C \cup R$ is a Brill--Noether curve of degree and genus \eqref{lem:rightcomp_1sec} $(d+1,g)$, \eqref{lem:rightcomp_2sec} $(d+1,g+1)$, \eqref{lem:rightcomp_4sec} $(d+2,g+3)$.
\end{lem}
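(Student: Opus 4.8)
The plan is to treat the three cases uniformly. For each, write $X = C \cup R$ and $\Gamma = C \cap R$, and let $\mathcal{Z}$ denote the component of the Hilbert scheme containing $[X]$. I will establish two facts: (a) $X$ is smoothable to a smooth, irreducible, nondegenerate curve of the asserted degree $d'$ and genus $g'$; and (b) $\mathcal{Z}$ dominates $\overline{M}_{g'}$. Since the Brill--Noether component is, by definition, the \emph{unique} component of the Hilbert scheme whose general member is smooth and nondegenerate and which dominates the relevant moduli space, statements (a) and (b) together force $\mathcal{Z}$ to be the Brill--Noether component, proving that $X$ is a Brill--Noether curve. Throughout I use that quasi-transversality guarantees $X$ is nodal, hence a local complete intersection, so that $N_X$ is genuinely a rank-two vector bundle.

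For (a), the key point is the vanishing $H^1(X, N_X) = 0$. To establish it I would use the gluing sequence
\[ 0 \to N_X \to N_X|_C \oplus N_X|_R \to N_X|_\Gamma \to 0. \]
Lemma~\ref{lem:hh} identifies $N_X|_C$ with $N_C(\Gamma)$ after performing elementary modifications at the points of $\Gamma$ toward the tangent directions of $R$. Since $C$ is a general Brill--Noether curve its deformations are unobstructed, that is $H^1(N_C) = 0$; twisting up by the effective divisor $\Gamma$ preserves this, and the quasi-transversality together with the generality of the tangent directions at the nodes is exactly what is needed to ensure the subsequent degree-lowering modifications do not reintroduce higher cohomology. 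On the rational curve $R$ the modified normal bundle is a direct sum of line bundles of nonnegative degree, so its $H^1$ also vanishes. Combined with surjectivity of $H^0(N_X|_C) \oplus H^0(N_X|_R) \to N_X|_\Gamma$ (which follows from the same vanishing after twisting down by $\Gamma$), the long exact sequence yields $H^1(N_X) = 0$. By the standard smoothing theory for nodal curves, $X$ is then unobstructed, the Hilbert scheme is smooth at $[X]$ of dimension $h^0(N_X) = \chi(N_X) = 4d'$, and because $H^1(N_X)=0$ each node smooths independently; hence the general deformation is smooth. It is irreducible because $X$ is connected, and nondegenerate because $X$ already spans $\pp^3$ and nondegeneracy is open.

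For (b), I would determine the stable model of $X$ as $C$ ranges over the Brill--Noether component and the attaching data vary, and check that these sweep out $\overline{M}_{g'}$ after smoothing. In case~\eqref{lem:rightcomp_1sec} the rational tail $R$ is unstable and contracts, so the stable model is $C$; since $C$ already sweeps out $\overline{M}_g$, the component $\mathcal{Z}$ dominates $\overline{M}_g$. In case~\eqref{lem:rightcomp_2sec} contracting the rational bridge $R$ identifies $p$ with $q$, yielding an irreducible one-nodal curve whose normalization is $C$; for general $C$ and general $p,q$ this is a general point of the boundary divisor $\Delta_0 \subset \overline{M}_{g+1}$, which smooths to a general point of $\overline{M}_{g+1}$. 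In case~\eqref{lem:rightcomp_4sec} the component $R$ carries four nodes and is already stable, so $X$ is a general point of the codimension-four stratum of $\overline{M}_{g+3}$ parametrizing a genus-$g$ curve glued to a rational curve at four points; smoothing the four nodes moves transversally off this stratum to a general point of $\overline{M}_{g+3}$. In each case $\mathcal{Z}$ therefore dominates $\overline{M}_{g'}$.

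The main obstacle is the characteristic-independent proof of $H^1(N_X) = 0$: one must control the elementary modifications on $C$ predicted by Lemma~\ref{lem:hh} using only the generality of the attaching points and tangent directions, without appealing to characteristic-zero genericity. The conic case~\eqref{lem:rightcomp_4sec} is the most delicate, both because the normal bundle computation involves four simultaneous modifications and because the domination statement requires the four nodes to deform independently so as to cover all four transverse directions to a codimension-four stratum of $\overline{M}_{g+3}$.
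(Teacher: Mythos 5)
Your overall skeleton --- smoothability through a smooth point of the Hilbert scheme, plus dominance of moduli, which together pin down the Brill--Noether component --- is sound, and in cases \eqref{lem:rightcomp_1sec} and \eqref{lem:rightcomp_2sec} it can be carried out: there $N_{C\cup R}|_R$ is $\O(1)\oplus\O(2)$, resp.\ $\O(2)\oplus\O(2)$, your Mayer--Vietoris sequence gives $H^1(N_{C\cup R})=0$, and your stable-model argument for dominance works because the image of the moduli map is irreducible, contains a dense subset of the boundary divisor, and contains smooth curves, so it must be everything. (Even here, note that ``$H^1(N_X)=0$ implies the nodes smooth independently'' is not formal --- it requires surjectivity of $H^0(N_X)$ onto the local smoothing spaces at the nodes, which is a theorem of Hartshorne--Hirschowitz rather than an immediate consequence.) The genuine gaps are both in case \eqref{lem:rightcomp_4sec}. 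First, your $H^1$ vanishing on the conic rests on $N_{C\cup R}|_R$ being balanced (or at least having no summand of degree $\leq -2$ after twisting down by $\Gamma$), and ``generality of the tangent directions at the nodes'' is not available: the conic is constrained to the plane $H$, so the four modifications are toward directions inside $H$. In the paper this balancedness is exactly Lemma~\ref{lem:4_sec_conic}, whose proof \emph{uses} the present lemma; quoting it here would be circular, and you offer no independent argument.

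Second, and more seriously, the dominance claim in case \eqref{lem:rightcomp_4sec} is false as stated. Since no four general points of a nondegenerate space curve are coplanar, the attaching points $p_1,\dots,p_4$ are cut out by a plane and vary in only a $3$-parameter family, plus one more parameter for the pencil of conics through them; the stable models you can reach therefore form a $(3g+1)$-dimensional family inside the $(3g+2)$-dimensional boundary stratum, a proper divisor in it, so $X$ is \emph{not} a general point of that stratum. As the stratum has codimension $4$ in $\overline{M}_{g+3}$, the ``irreducible image containing a divisor'' trick from case \eqref{lem:rightcomp_2sec} gives nothing, and the assertion that smoothing the four nodes ``moves transversally off this stratum'' is precisely the unproved content: it amounts to surjectivity of the differential of the Hilbert-to-moduli map at $[X]$, whose standard sufficient criterion is $H^1(T_{\pp^3}|_X)=0$ --- and this cannot be proved componentwise for a $4$-secant conic, since by \eqref{rest_tang_es} one would need $H^1(T_{\pp^3}|_R(-p_1-\cdots-p_4))=0$, while $T_{\pp^3}|_R(-p_1-\cdots-p_4)\simeq \O\oplus\O(-2)^{\oplus 2}$ has $H^1\neq 0$. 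This is exactly why the paper argues differently: it works with $H^1(T_{\pp^3}|_{C\cup R})=0$ throughout (which yields both smoothing into the correct component and dominance in one stroke, characteristic-independently), and in case \eqref{lem:rightcomp_4sec} it circumvents the failure of the naive vanishing by specializing $C$ to $C'\cup L$ with $L$ a $2$-secant line meeting $R$, proving $H^1(N)=0$ for the sheaf controlling deformations of $L\cup R$ fixing the five points $\Gamma$ and preserving the node at $p$, smoothing $L\cup R$ to a twisted cubic $R'$ through $\Gamma$ (for which the twisted tangent bundle \emph{does} have vanishing $H^1$), and transporting the conclusion back through the smooth Hilbert-scheme point $[C'\cup L\cup R]$. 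Your proposal has no replacement for this step, and you yourself flag this as the delicate point without resolving it.
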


\begin{proof}
By deformation theory, it suffices to show that $H^1(T_{\pp^3}|_{C \cup R})=0$, so that the map $C \cup R \to \pp^3$ may be lifted as $C \cup R$ is deformed to a general curve.  Moreover, if $C$ is general, then $H^1(T_{\pp^3}|_C) = 0$ by the Gieseker-Petri Theorem.
Using \eqref{eq:vb_rest}, we have an exact sequence 
\begin{equation}\label{rest_tang_es}0 \to T_{\pp^3}|_R(-R \cap C) \to T_{\pp^3}|_{C \cup R} \to T_{\pp^3}|_C \to 0.\end{equation}
In cases \eqref{lem:rightcomp_1sec} and \eqref{lem:rightcomp_2sec}, $T_{\pp^3}|_R \simeq \O(2) \oplus \O(1)^{\oplus 2}$.  Hence $H^1(T_{\pp^3}|_R(-p)) = 0$, respectively $H^1(T_{\pp^3}|_R(-p-q)) = 0$, and therefore, by \eqref{rest_tang_es} and the Gieseker-Petri Theorem for $C$, we have that $H^1(T_{\pp^3}|_{C\cup R}) = 0$.

For part \eqref{lem:rightcomp_4sec}, using that \(\rho(g,r,d) \geq 1\), we may apply part \eqref{lem:rightcomp_2sec} to specialize $C$ to the union of a Brill--Noether curve $C'$ of degree $d-1$ and genus $g-1$ and a $2$-secant line $L$, such that $R$ meets $C'$ at three points and meets $L$ at one point $p$.  Let $\Gamma \colonequals (L \cup R) \cap C'$, denoted by solid dots below.

\vspace{-0.15in}

\begin{center}
\begin{tikzpicture}[scale=1.5]
\draw (1, 2) .. controls (0.5, 2) and (-0.5, 1.5) .. (0, 1);
\draw (0, 1) .. controls (1, 0) and (1, 2) .. (0.1, 1.1);
\draw (-0.1, 0.9) .. controls (-0.5, 0.5) and (0.5, -0.3) .. (1, -0.3);
\draw (0.145, 1.53) -- (0.845, 0.53);
\draw (0.4838, 0.22) .. controls (-0.0362, 1.3328) and (0.67, 1.6628) .. (1.19, 0.55);
\draw (0.4838, 0.22) .. controls (1.0038, -0.8928) and (1.71, -0.5628) .. (1.19, 0.55);
\draw (0.39, 1.18) circle[radius=0.03];
\draw (0.9, 0.5) node{$L$};
\draw (1.38, 0.43) node{$R$};
\draw (0.39, 1.01) node{$p$};
\draw (1.14, 2) node{$C'$};
\filldraw (0.793, -0.263) circle[radius=0.02];
\filldraw (0.67, 0.78) circle[radius=0.02];
\filldraw (0.31, 0.767) circle[radius=0.02];
\filldraw (0.32, 1.277) circle[radius=0.02];
\filldraw (0.745, 1.155) circle[radius=0.02];
\end{tikzpicture}
\end{center}

\vspace{-0.3in}

First, we show that (a) $C' \cup L \cup R$ is a smooth point of the Hilbert scheme and (b) we can smooth $L \cup R$ to a twisted cubic $R'$ that continues to pass through the $5$-points $\Gamma$. Let $N$ be the subsheaf of $N_{L \cup R}(-\Gamma)$ whose sections fail to smooth the node at $p$ (as discussed in Section \ref{sec:defs}).
Applying \eqref{N-res} in the case of restriction to $L$ gives the exact sequence 
\begin{equation}\label{eq:rest_to_L}
0 \to [N_{L \cup R}|_R(-p - \Gamma) \simeq \O\oplus \O(-1)] \to N \to [N|_L \simeq \O(-1)^{\oplus 2}] \to 0,
\end{equation}
where the isomorphisms within come from the explicit descriptions of \(R\) and \(L\) as complete intersections (as in Example \ref{ex:line}).
Hence, by the long exact sequence associated to \eqref{eq:rest_to_L}, we have $H^1(N) = 0$.
By deformation theory, statement (b) follows directly from $H^1(N) = 0$; this vanishing also implies $H^1(N_{C' \cup L \cup R})=0$ (and hence, by deformation theory, statement (a)).

To complete the proof, $T_{\pp^3}|_{R'}(-R'\cap C') \simeq \O(-1)^{\oplus 3}$ has no higher cohomology and so \eqref{rest_tang_es} and the Gieseker-Petri Theorem for $C'$ show that $H^1(T_{\pp^3}|_{C'\cup R'})=0$.  Therefore $C' \cup R'$ is in the Brill--Noether component.  Since $C' \cup L \cup R$ is a smooth point of the Hilbert scheme and both $C' \cup R'$ and $C \cup R$ are deformations of this, they are in the same component; in particular, $C \cup R$ is in the Brill--Noether component.
\end{proof}

\section{The Unstable Cases}\label{sec-unstable}

\subsection*{Arbitrary characteristic}
In two cases --- $(d, g) \in \{(5, 2), (6, 4)\}$ --- Theorem~\ref{thm:main}
asserts that, over a field of any characteristic, $N_C$ is unstable. In both of these cases, $C$ lies on a
quadric $Q$, and from the normal bundle exact sequence (c.f., \eqref{eq:normal_bundle_exact_seq}),
\begin{equation} \label{inQ}
0 \to [N_{C/Q} \simeq K_C(2)] \to N_C \to [N_Q|_C \simeq \O_C(2)] \to 0,
\end{equation}
we have that $N_C$ has a subbundle $N_{C/Q}$ of slope $2d + 2g - 2$.  If $(d,g) = (5,2)$ (respectively $(6,4)$) then $\mu(N_{C/Q}) = 12$ (respectively $18$), which is strictly more than $\mu(N_C) = 11$ (respectively $15$).

In fact, we can say more.
Note that $\operatorname{Ext}^1(\O_C(2), K_C(2)) \simeq H^1(K_C)$ is $1$-dimensional;
therefore there are only two such extensions up to isomorphism
(the split extension, and a unique nontrivial extension).

When $(d, g) = (6, 4)$, such curves $C$ are the complete intersection
of a quadric and cubic surface, and so \eqref{inQ} is split.
When $(d, g) = (5, 2)$, the following lemma
is equivalent to the assertion that \eqref{inQ} is nonsplit:

\begin{lem}\label{lem:52_sections_from_quadric}
Let $D$ be a Brill--Noether curve of degree $5$ and genus $2$ and let $Q$ be the unique quadric containing it.
The inclusion $K_D \simeq N_{D/Q}(-2) \subseteq N_D(-2)$ induces an isomorphism on global sections
\[H^0(K_D) \simeq H^0(N_D(-2)). \]
\end{lem}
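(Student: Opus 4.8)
The plan is to twist the defining sequence \eqref{inQ} and read the statement off a short cohomology computation, which reduces everything to the non-splitting of \eqref{inQ}. Tensoring \eqref{inQ} with $\O_D(-2)$ and using $N_{D/Q}(-2) \simeq K_D$ and $N_Q|_D(-2) \simeq \O_D$ gives
\begin{equation*}
0 \to K_D \to N_D(-2) \to \O_D \to 0.
\end{equation*}
The inclusion $H^0(K_D) \hookrightarrow H^0(N_D(-2))$ is automatically injective, so the claim is that it is surjective. Since $D$ has genus $2$ we have $h^0(K_D) = 2$, $h^0(\O_D) = 1$, and $h^1(K_D) = 1$, so in the associated long exact sequence the inclusion is an isomorphism if and only if the connecting map $\delta \colon H^0(\O_D) \to H^1(K_D)$ is nonzero. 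As both spaces are $1$-dimensional and $\delta(1)$ is the extension class of the displayed sequence (equivalently of \eqref{inQ}), this is exactly the assertion that \eqref{inQ} is non-split, matching the remark preceding the lemma.

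It remains to prove that \eqref{inQ} does not split; this is the heart of the matter. I would first record the geometric reformulation: a section of $N_D(-2)$ not lying in $H^0(K_D)$ has nowhere-vanishing image in the quotient $\O_D$, hence is itself nowhere vanishing and defines a splitting $\O_D \hookrightarrow N_D(-2)$. Thus $h^0(N_D(-2)) > 2$ precisely when $N_Q|_D \simeq \O_D(2)$ is a direct summand of $N_D$ complementary to $N_{D/Q}$; equivalently, when $N_D \simeq K_D(2) \oplus \O_D(2)$. (One can also phrase this through endomorphisms: since $N_{D/Q}$ is the maximal destabilizing subbundle, every endomorphism preserves it, and a dimension count shows $\dim \operatorname{End}(N_D)$ equals $3$ in the non-split case and $4$ in the split case.) So I must rule out this splitting.

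The hard part is the non-vanishing of the extension class $e \in \operatorname{Ext}^1(\O_D(2), K_D(2)) \simeq H^1(K_D)$. I would establish it using the explicit model of $D$ as a curve of type $(2,3)$ on $Q \cong \pp^1 \times \pp^1$, under which $K_D \simeq \O_Q(0,1)|_D$ realizes the hyperelliptic pencil; computing $e$ as a \v{C}ech cocycle in terms of the bidegree-$(2,3)$ equation of $D$ should show it is nonzero. Alternatively --- and more in keeping with the degenerative method of this paper --- it suffices by upper-semicontinuity of $h^0$ to exhibit a single smooth curve $D_0$ of degree $5$ and genus $2$ with $h^0(N_{D_0}(-2)) = 2$, since $h^0(N_D(-2)) \geq h^0(K_D) = 2$ holds for every such $D$. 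The conceptual point distinguishing this case from $(d,g) = (6,4)$ is that there $D$ is a complete intersection of a quadric and a cubic, forcing $N_D \simeq \O_D(2) \oplus \O_D(3)$ and a split sequence, whereas here $D$ is only linked to a line on $Q$ and is not a complete intersection; the lemma asserts that this genuinely non-complete-intersection geometry obstructs the splitting.
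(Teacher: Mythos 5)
Your reduction of the lemma to the non-splitting of \eqref{inQ} is correct: twisting by $\O_D(-2)$, the map on $H^0$ is an isomorphism iff the connecting map $\delta \colon H^0(\O_D) \to H^1(K_D)$ is nonzero, and $\delta(1)$ is indeed the extension class in $\operatorname{Ext}^1(\O_D, K_D) \simeq H^1(K_D)$. This matches the equivalence the paper itself records just before the lemma. But that equivalence is the easy direction, and your proposal stops exactly where the actual content begins. Neither of your two suggested routes to the non-splitting is carried out: the \v{C}ech-cocycle computation of the extension class for a type $(2,3)$ curve on $\pp^1 \times \pp^1$ is asserted to ``show it is nonzero'' with no computation, and this nonvanishing is not formal --- for the neighboring case $(d,g)=(6,4)$, a type $(3,3)$ curve on the same quadric, the analogous class \emph{vanishes}, so any honest computation must genuinely distinguish the two bidegrees. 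Your alternative route correctly identifies what would suffice (a single curve in the family with $h^0(N_{D_0}(-2)) = 2$, plus semicontinuity), but you never exhibit such a curve; producing one is precisely the hard step, and it is where the paper spends its entire proof.

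The paper's argument fills this gap by degeneration: it specializes $D$ to $E \cup L$, where $E$ is an elliptic normal quartic and $L$ a $2$-secant line meeting $E$ at $p$ and $q$ (a Brill--Noether curve by Lemma~\ref{lem:rightcomp}\eqref{lem:rightcomp_2sec}). Using Lemma~\ref{lem:hh}, one gets $N_{E \cup L}(-2)|_L \simeq \O_L^{\oplus 2}$, whence restriction embeds $H^0(N_{E\cup L}(-2))$ into $H^0(N_{E\cup L}(-2)|_E)$; writing $E = Q_1 \cap Q_2$ with $L \subset Q_1$, the latter space is $H^0(\O_E(p+q) \oplus \O_E)$, of dimension $3$, and a careful analysis of the node-smoothing data shows the section of the $\O_E$ summand (coming from $N_{E/Q_2}$) does not extend across $L$, since $N_L(-2) \simeq \O_L(-1)^{\oplus 2}$ has no sections. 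This gives $h^0 = 2$ at the degenerate curve, and semicontinuity concludes. Note also that the reducible curve is what makes this tractable --- your insistence on a \emph{smooth} $D_0$ is both unnecessary (semicontinuity applies in the flat family through the nodal curve) and self-defeating, since verifying $h^0 = 2$ directly on a smooth quintic of genus $2$ is the original problem. So: correct framing, but the theorem-bearing step is missing.
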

\begin{proof}
As $H^0(K_D) \hookrightarrow H^0(N_D(-2))$, it suffices to show that $h^0(N_D(-2)) = 2$.
We will prove this by degenerating the curve $D$ to the union of an elliptic normal curve $E$ of degree $4$ and genus $1$ and a general $2$-secant line $L$ meeting $E$ quasi-transversely at $p$ and $q$, which is a Brill--Noether curve by Lemma \ref{lem:rightcomp}\eqref{lem:rightcomp_2sec}.

Since the tangent lines to \(E\) at \(p\) and \(q\) span \(\pp^3\), combining Lemma \ref{lem:hh} with Example \ref{ex:line}, we see that
$N_{E \cup L}(-2)|_L \simeq \O_L \oplus \O_L$
has a $2$-dimensional space of  global sections.   Furthermore, since $H^0(N_{E \cup L}(-2)|_L(-p-q)) = 0$, we have that
\begin{equation}\label{union_inclusion}
H^0(N_{E\cup L}(-2)) \hookrightarrow H^0(N_{E \cup L}(-2)|_{E}). 
\end{equation}

As in Example \ref{ex:enc}, choosing quadrics \(Q_1\) and \(Q_2\) whose intersection is \(E\) and such that \(Q_1\) contains \(L\), we see that
the normal bundle restricted to $E$
\[N_{E\cup L}(-2)|_E \simeq N_{E/Q_1}(-2)(p + q) \oplus N_{E/Q_2}(-2) \simeq \O_E(p+q) \oplus \O_E, \]
has a $3$-dimensional space of global sections.  It remains to show that one of these sections is not in the image of \eqref{union_inclusion}.

We claim that the unique (up to scaling) section of $\O_E$ is not in the image of \eqref{union_inclusion}.
Indeed, since $L$ is transverse to $Q_2$, this section fails to smooth both nodes;
if it extended across $L$, it must extend to a section in $H^0(N_L(-2)) \subset H^0(N_{E \cup L}|_L(-2))$.
But $N_L(-2) \simeq \O_L(-1) \oplus \O_L(-1)$ has no global sections, so any extension across would have to vanish
identically along $L$, and in particular at $p$ and $q$
(which this section does not).
\end{proof}

\subsection*{Characteristic 2}
Theorem \ref{thm:main} asserts that, in characteristic $2$, there are infinitely many pairs $(d,g) = (2k,0)$ for which the normal bundle of a general Brill--Noether curve is unstable.  This is the first case of a more general phenomena occurring only in characteristic $2$.

Let $C \subset \pp^r$ be a Brill--Noether curve.  In any characteristic, the Euler sequence \eqref{euler_seq} shows that the bundle $N_C^\vee(1)$ sits in an exact sequence
\begin{equation}\label{prin_parts} 0 \to N_C^\vee(1) \to \O_C^{\oplus r+1} \to \sP^1(\O_C(1)) \to 0,\end{equation}
where $\sP^1(\O_C(1))$ is the first bundle of principal parts of the line bundle $\O_C(1)$.

Now assume that $\chara(k) =2$ and let $\pi \colon C \to C^{(2)}$ denote the (relative) Frobenius morphism.  Given a reduced point $c \in C$, the fiber of $\pi$ containing $c$ is the nonreduced point $2c$.  Therefore 
\[\sP^1(\O_C(1)) \simeq \pi^*\pi_* \O_C(1).\]
Thus $N_C^\vee(1) \simeq \pi^* K$ is isomorphic to the pullback of a vector bundle $K$ under Frobenius.  Using this, we have the following.

\begin{lem}\label{lem:char2_obs}
Assume that $\chara(k)=2$ and let $C \simeq \pp^1$ be a rational curve of degree $d$ in $\pp^r$ over $k$.  Then the normal bundle splits as
\[N_C \simeq \bigoplus_i \O_{\pp^1}(a_i), \]
for integers $a_i \equiv d \pmod{2}$.
\end{lem}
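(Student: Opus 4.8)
The plan is to leverage the isomorphism $N_C^\vee(1) \simeq \pi^* K$ established in the discussion just preceding the lemma, where $\pi \colon C \to C^{(2)}$ is the relative Frobenius and $K$ is a vector bundle on $C^{(2)}$. Since $C \simeq \pp^1$, the Frobenius twist $C^{(2)}$ is again isomorphic to $\pp^1$, so by Grothendieck's splitting theorem $K \simeq \bigoplus_i \O_{\pp^1}(b_i)$ for some integers $b_i$. The entire remaining content of the lemma is then to track what Frobenius pullback does to degrees and to undo the twist by $\O_C(1)$.

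First I would record the key numerical input: the relative Frobenius $\pi \colon \pp^1 \to \pp^1$ in characteristic $2$ is a finite morphism of degree $2$, and hence $\pi^* \O_{\pp^1}(b) \simeq \O_{\pp^1}(2b)$ for every integer $b$. Applying this to each summand of $K$ gives
\[ N_C^\vee(1) \simeq \pi^* K \simeq \bigoplus_i \O_{\pp^1}(2b_i), \]
so every line-bundle summand of $N_C^\vee(1)$ has even degree.

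Finally I would untwist. Since $C \subset \pp^r$ has degree $d$, the hyperplane restriction is $\O_C(1) \simeq \O_{\pp^1}(d)$. Writing $N_C \simeq \bigoplus_i \O_{\pp^1}(a_i)$ (which exists again by Grothendieck), we obtain $N_C^\vee(1) \simeq \bigoplus_i \O_{\pp^1}(d - a_i)$. Comparing with the previous display, each $d - a_i$ is even, i.e.\ $a_i \equiv d \pmod 2$, as desired.

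There is essentially no serious obstacle once the isomorphism $N_C^\vee(1) \simeq \pi^* K$ is in hand, as the argument is pure degree bookkeeping. The only points requiring care are the two places where a twist enters: correctly observing that Frobenius on $\pp^1$ multiplies the degree of a line bundle by $p = 2$ (rather than fixing it), and remembering that $\O_C(1)$ is $\O_{\pp^1}(d)$ rather than $\O_{\pp^1}(1)$. Both facts are routine, but conflating either one would destroy the parity computation.
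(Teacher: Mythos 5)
Your proposal is correct and follows essentially the same route as the paper: both start from the isomorphism $N_C^\vee(1) \simeq \pi^* K$ established in the preceding discussion, split $K$ by Grothendieck's theorem, use that Frobenius pullback doubles degrees on $\pp^1$, and untwist by $\O_C(1) \simeq \O_{\pp^1}(d)$ to conclude $a_i \equiv d \pmod 2$. The paper's proof is just a more compressed version of exactly this bookkeeping, writing $N_C \simeq \bigoplus \O_{\pp^1}(d - 2k_i)$ in one step.
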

\begin{proof}
If $\chara(k)=2$, then $N_C^\vee(1) \simeq \pi^*K$ for some vector bundle $K$ on $\pp^1$. Write $K \simeq \bigoplus \O_{\pp^1}(k_i)$.
Since $\pi^* \O_{\pp^1}(a) \simeq \O_{\pp^1}(2a)$, we have
$N_C \simeq \bigoplus \O_{\pp^1}(d - 2k_i)$ as desired.
\end{proof}

\begin{cor}\label{cor:char2_stab}
Let $C$ be a general rational curve in $\pp^r$ of degree $d \geq r$.
Then $N_C$ is semistable only if $2d \equiv 2 \pmod{r-1}$;
in characteristic~$2$, this can be strengthened to $d \equiv 1 \pmod{r - 1}$.
\end{cor}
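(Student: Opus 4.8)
The plan is to reduce everything to two inputs: the degree of $N_C$ and the classification of semistable split bundles on $\pp^1$. Since $C \simeq \pp^1$ is rational, $N_C$ splits as $\bigoplus_{i=1}^{r-1}\O_{\pp^1}(a_i)$, and the degree computation from the Euler sequence (with $g=0$) gives $\sum_i a_i = (r+1)d - 2$, so $\mu(N_C) = \frac{(r+1)d-2}{r-1}$.

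First I would recall the elementary fact that a split vector bundle $\bigoplus_i \O_{\pp^1}(a_i)$ on $\pp^1$ is semistable if and only if all the $a_i$ are equal: if some $a_j$ strictly exceeds the average, then the summand $\O_{\pp^1}(a_j)$ is a line subbundle of slope strictly greater than $\mu(N_C)$, contradicting semistability. Hence semistability of $N_C$ forces $a_i = \mu(N_C)$ for every $i$, which in particular requires the slope $\frac{(r+1)d-2}{r-1}$ to be an integer, i.e. $(r-1)\mid (r+1)d - 2$. Reducing modulo $r-1$ using $(r+1)\equiv 2$, this divisibility is exactly the condition $2d \equiv 2 \pmod{r-1}$, which gives the first assertion.

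For the characteristic $2$ refinement, I would invoke Lemma~\ref{lem:char2_obs}, which gives $a_i \equiv d \pmod{2}$ for every $i$. If $N_C$ is semistable, then by the above each $a_i$ equals the common integer value $a = \mu(N_C)$, and the congruence becomes $a \equiv d \pmod 2$. Since
\[
a - d = \frac{(r+1)d - 2}{r-1} - d = \frac{2(d-1)}{r-1},
\]
and $a-d$ is already an integer, the requirement that $a-d$ be \emph{even} is precisely the requirement that $\frac{d-1}{r-1}$ be an integer, i.e. $d \equiv 1 \pmod{r-1}$.

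I do not expect any genuine obstacle: the corollary is a direct arithmetic consequence of Lemma~\ref{lem:char2_obs} together with the fact that a split bundle on $\pp^1$ is semistable exactly when its summands have equal degree. The only step needing a little care is the modular bookkeeping---tracking the parity and divisibility conditions---and in particular combining the characteristic-$2$ congruence $a \equiv d \pmod 2$ with the slope formula so as to land on $d \equiv 1 \pmod{r-1}$ rather than the weaker $2d \equiv 2 \pmod{r-1}$.
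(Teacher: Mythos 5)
Your proposal is correct and follows essentially the same route as the paper: semistability of the split bundle $N_C \simeq \bigoplus \O_{\pp^1}(a_i)$ forces integrality of $\mu(N_C) = d + \frac{2d-2}{r-1}$, giving $2d \equiv 2 \pmod{r-1}$, and in characteristic $2$ Lemma~\ref{lem:char2_obs} forces $\mu(N_C) - d = \frac{2(d-1)}{r-1}$ to be even, which is exactly $d \equiv 1 \pmod{r-1}$. Your modular bookkeeping matches the paper's (more tersely stated) argument step for step.
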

\begin{proof}
In any characteristic, $N_C$ can only be semistable if $\mu(N_C) = d + \frac{2d-2}{r-1}$ is an integer.
In characteristic~$2$,
Lemma~\ref{lem:char2_obs} implies that furthermore $\mu(N_C) - d$ must be an \emph{even} integer.
\end{proof}

\begin{rem}
When $r=3$, we prove in Section
\ref{sec:reduction_finite}
that Corollary \ref{cor:char2_stab} gives the only obstruction to semistability for the normal bundle of a rational curve in characteristic $2$.
With a little more work, one can show the same in any projective space.
\end{rem}

\section{Stability and degeneration I}\label{sec:stab_degen1}

In this section, by specializing to the union of a general Brill--Noether curve and a $4$-secant conic, we reduce Theorem~\ref{thm:main} to the cases $g \leq 8$.  Our main tool will be the following first basic lemma proving stability by degeneration.


\begin{lem} \label{lem:naive}
Suppose that $C  = X \cup Y$ is a reducible curve and $E$ is a vector bundle on $C$ such that $E|_X$ and $E|_Y$ are semistable.
Then $E$ is semistable.
Furthermore, if one of $E|_X$ or $E|_Y$ is stable, then $E$ is stable.
\end{lem}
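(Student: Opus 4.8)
The plan is to reduce the semistability of $E$ on $C$ to that of its restrictions by splitting the adjusted slope of a subbundle across the two components. Write $\nu \colon \tilde{C} \to C$ for the normalization; since normalization separates the components and resolves all nodes, $\tilde{C} = \tilde{X} \sqcup \tilde{Y}$ is the disjoint union of the normalizations of $X$ and $Y$. Given a subbundle $F \subset \nu^* E$ of rank $s$, set $F_X \colonequals F|_{\tilde{X}}$ and $F_Y \colonequals F|_{\tilde{Y}}$; because $\tilde{X}$ and $\tilde{Y}$ are open and closed in $\tilde{C}$, these are subbundles of $\nu^*E|_{\tilde{X}} = \nu_X^*(E|_X)$ and $\nu^*E|_{\tilde{Y}} = \nu_Y^*(E|_Y)$ respectively, both of rank $s$. (The fact that the two ranks coincide is exactly what makes the codimension terms at the connecting nodes $\Gamma \colonequals X \cap Y$ well-defined.) I would also record at the outset the identity $\mu(E) = \mu(E|_X) + \mu(E|_Y)$, which holds because the degree of a bundle is additive over the components of a nodal curve while the rank $\rk E$ serves as the common denominator throughout.

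The heart of the argument is to partition the nodes of $C$ into those internal to $X$, those internal to $Y$, and the connecting nodes $\Gamma$, and to read off the corresponding split of the sum defining $\mu^{\text{adj}}_C$. Using $\deg F = \deg F_X + \deg F_Y$ and recognizing the internal-node corrections as precisely those appearing in $\mu^{\text{adj}}_X$ and $\mu^{\text{adj}}_Y$, this gives
\[
\mu^{\text{adj}}_C(F) = \mu^{\text{adj}}_X(F_X) + \mu^{\text{adj}}_Y(F_Y) - \frac{1}{s}\sum_{p \in \Gamma} \codim_F\!\left(F_X|_{\tilde{p}_1} \cap F_Y|_{\tilde{p}_2}\right).
\]
Each term in the final sum is a codimension, hence nonnegative, so it can only decrease the left-hand side. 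Discarding it and invoking the semistability of $E|_X$ and $E|_Y$ yields
\[
\mu^{\text{adj}}_C(F) \leq \mu^{\text{adj}}_X(F_X) + \mu^{\text{adj}}_Y(F_Y) \leq \mu(E|_X) + \mu(E|_Y) = \mu(E),
\]
so $E$ is semistable.

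For the sharper statement, I would take $F$ to be a proper subbundle, so that $0 < s < \rk E$; then $F_X$ and $F_Y$ are simultaneously proper and nonzero on their respective components. If, say, $E|_X$ is stable, the inequality $\mu^{\text{adj}}_X(F_X) < \mu(E|_X)$ is strict, and the displayed chain becomes strict as well, giving $\mu^{\text{adj}}_C(F) < \mu(E)$ for every proper $F$, which is stability. I do not expect a serious obstacle here: the content is entirely in organizing the node contributions correctly, and the only points that demand a moment's care are (i) that a subbundle of $\nu^* E$ carries a common rank on both components, so that the $\Gamma$-codimensions make sense, and (ii) the additivity $\mu(E) = \mu(E|_X) + \mu(E|_Y)$, which is what makes the slope bound close up exactly. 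The nonnegativity of the connecting-node corrections is precisely what sends the inequality in the correct direction, and it is also the source of the strictness in the stable case.
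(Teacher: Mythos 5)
Your proposal is correct and follows essentially the same route as the paper, whose proof is exactly the one-line chain $\mu^{\text{adj}}(F) \leq \mu(F|_X) + \mu(F|_Y) \leqpar \mu(E|_X) + \mu(E|_Y) = \mu(E)$, with the strict version when one restriction is stable. Your writeup merely makes explicit the details the paper leaves implicit --- the common rank of $F$ on both components, the splitting of the node corrections into internal ones (absorbed into $\mu^{\text{adj}}_X$ and $\mu^{\text{adj}}_Y$) and nonnegative connecting ones at $X \cap Y$ that are discarded, and the additivity $\mu(E) = \mu(E|_X) + \mu(E|_Y)$ coming from the shared rank --- all of which is sound.
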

\begin{proof}
Write $\nu \colon \tilde{X} \sqcup \tilde{Y} \to C$ for the normalization map.
For any subbundle $F \subseteq \nu^* E$ we have
\[\mu^{\text{adj}} \left( F \right) \leq  \mu^{\text{adj}}_X\left( F|_{\tilde{X}} \right) + \mu^{\text{adj}}_Y\left( F|_{\tilde{Y}} \right) \leqpar \mu\left( E|_X \right) + \mu\left( E|_Y \right) =  \mu\left( E \right).  \qedhere\]
\end{proof}

\subsection*{\boldmath $4$-secant conic degenerations} 
Let $C$ be a Brill--Noether curve of degree $d \geq 4$ and genus $g$ in~$\pp^3$.  Let $H \subset \pp^3$ be a $2$-plane meeting $C$ transversely; let $p_1, \dots, p_4$ be four points in $C\cap H$.  For $R \subset H$ a conic through $p_1, \dots, p_4$, the union $C\cup R$ is a Brill--Noether curve of degree $d+2$ and genus $g + 3$ by Lemma \ref{lem:rightcomp}\eqref{lem:rightcomp_4sec}, provided that \(\rho(g,3, d) \geq 1\).

\begin{center}
\begin{tikzpicture}[scale=1]
\draw (1, 3.5) .. controls (0.5, 3.5) and (-1, 3) .. (0, 2);
\draw (0, 2) .. controls (0.5, 1.5) and (1, 1) .. (2, 1);
\draw (0.1, 2.1) .. controls (0.5, 2.5) and (1, 3) .. (2, 3);
\draw (1, 0.5) .. controls (0.5, 0.5) and (-1, 1) .. (-0.1, 1.9);
\draw (2, 1) .. controls (4, 1) and (4, 3) .. (2, 3);
\draw (2.25, 2) ellipse (0.5 and 1.25);
\filldraw (1.95,2.99) circle[radius=0.03];
\filldraw (1.95,1) circle[radius=0.03];
\filldraw (2.58,2.93) circle[radius=0.03];
\filldraw (2.58,1.06) circle[radius=0.03];
\draw (1.8,3.15) node{$p_1$};
\draw (1.8,0.85) node{$p_4$};
\draw (2.77,3.1) node{$p_2$};
\draw (2.75,0.9) node{$p_3$};
\draw (1.25,3.6) node{$C$};
\draw (2.25,.55) node{$R$};
\end{tikzpicture}
\end{center}

\begin{lem}\label{lem:4_sec_conic}
In the above setup, if $C$ is a general Brill--Noether curve with $(d,g) \neq (3, 0)$ or $(4,1)$, then
\[N_{C \cup R}|_R \simeq \O_{\pp^1}(5) \oplus \O_{\pp^1}(5) \]
is semistable.
\end{lem}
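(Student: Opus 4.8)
The plan is to compute $N_{C \cup R}|_R$ directly from the Hartshorne--Hirschowitz formula and then show that the resulting rank-two bundle on $R \simeq \pp^1$ is balanced. First I would apply Lemma~\ref{lem:hh} with $X = R$ and $Y = C$: choosing $q_i \in T_{p_i}C$, this gives
\[N_{C \cup R}|_R \simeq N_R(p_1 + \cdots + p_4)[p_1 \to q_1] \cdots [p_4 \to q_4].\]
Since $R \subset H \simeq \pp^2$ is a smooth conic, the in-plane and normal-to-$H$ directions give the canonical sequence $0 \to [N_{R/H} \simeq \O_{\pp^1}(4)] \to N_R \to [N_{H/\pp^3}|_R \simeq \O_{\pp^1}(2)] \to 0$, so $N_R \simeq \O_{\pp^1}(4) \oplus \O_{\pp^1}(2)$ and $N_R(p_1 + \cdots + p_4) \simeq \O_{\pp^1}(8) \oplus \O_{\pp^1}(6)$. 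Writing $V \colonequals N_{C\cup R}|_R$, each of the four modifications drops the degree by one, so $V$ has rank $2$ and degree $10$. As every bundle on $\pp^1$ splits, it suffices to prove that $V$ is balanced, which for a degree-$10$ rank-$2$ bundle is equivalent to the single vanishing $H^0(V(-6)) = 0$: a sub-line-bundle of degree $\geq 6$ exists precisely when $V(-6)$ has a nonzero section.

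Next I would translate this vanishing into linear algebra. Twisting commutes with the modifications, so
\[H^0(V(-6)) = \{\, s \in H^0\big(N_R(p_1 + \cdots + p_4)(-6)\big) : s(p_i) \in \ell_i \text{ for all } i \,\},\]
where $\ell_i \subset N_R|_{p_i}$ is the line spanned by the direction towards $q_i$. Here $N_R(p_1 + \cdots + p_4)(-6) \simeq \O_{\pp^1}(2) \oplus \O_{\pp^1}(0)$ has a four-dimensional space of sections, and each membership condition $s(p_i) \in \ell_i$ is a single linear constraint, so $H^0(V(-6))$ is cut out by four linear conditions on a four-dimensional space. The key geometric input is that $C$ meets $H$ transversely, so $T_{p_i}C \not\subset H$ and hence $\ell_i$ is never the in-plane direction $N_{R/H}|_{p_i} = \O_{\pp^1}(8)|_{p_i}$. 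Writing $t$ for a coordinate on $\pp^1$ with $p_i = t_i$, trivializing $\O_{\pp^1}(2)$ by quadratics $f$ and $\O_{\pp^1}(0)$ by a constant $c$, and letting $\alpha_i$ record the slope of $\ell_i$, the four conditions become $f(t_i) = \alpha_i c$. The associated $4 \times 4$ matrix has rows $(1, t_i, t_i^2, -\alpha_i)$; expanding along the last column exhibits its determinant as a linear form in $(\alpha_1, \dots, \alpha_4)$ whose coefficients are nonzero $3\times 3$ Vandermonde determinants in the distinct $t_i$. Hence the determinant is not identically zero, and $H^0(V(-6)) = 0$ whenever the configuration $(t_i, \alpha_i)$ is general.

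Finally I would verify that generality of $C$ produces a general configuration. As $C$ and the transverse plane $H$ vary, the four chosen points of $C \cap H$ become general points of $H \simeq \pp^2$, hence distinct points $t_i$ on a general conic $R$ through them; likewise the tangent lines $T_{p_i}C$ become general lines transverse to $H$, so the slopes $\alpha_i$ become general and mutually independent. This is where the hypothesis $(d,g) \neq (3,0), (4,1)$ enters: in smaller degree there are not four general coplanar points, and the tangent directions are constrained. Combining this with the determinant computation gives $H^0(V(-6)) = 0$, so $V \simeq \O_{\pp^1}(5) \oplus \O_{\pp^1}(5)$, which is (strictly) semistable.

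I expect the main obstacle to be this last step: rigorously establishing that a general Brill--Noether curve forces the combinatorial data $(t_i, \alpha_i)$ to be general, i.e.\ that the tangent directions of $C$ at four general plane-section points vary independently and are unconstrained modulo the transversality already imposed. The elementary-modification bookkeeping --- tracking how each $\ell_i$ sits relative to the in-plane subbundle --- is routine but must be handled carefully to confirm that the vanishing is governed by exactly the stated four conditions.
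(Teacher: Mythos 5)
The reduction in your first two paragraphs is correct, and it is a genuinely different route from the paper's: via Lemma~\ref{lem:hh} and the splitting $N_R \simeq \O_{\pp^1}(4) \oplus \O_{\pp^1}(2)$ induced by $H$, semistability of $N_{C\cup R}|_R$ becomes the vanishing $H^0(V(-6)) = 0$, which your Vandermonde computation converts into the nonvanishing of a determinant that is a nontrivial linear form in the slopes $(\alpha_1,\dots,\alpha_4)$ once the $t_i$ are distinct. But the final step --- that a general Brill--Noether curve yields a configuration $(t_i,\alpha_i)$ avoiding this determinant hypersurface --- is a genuine gap, and it is the entire content of the lemma. You assert that the tangent lines $T_{p_i}C$ "become general and mutually independent," but this is an interpolation-type statement about sections of $N_C$ moving tangent directions at four coplanar points independently; nothing in the proposal proves it, and it cannot be waved through "by generality": the determinant hypersurface is exactly the locus of quadric-type destabilizations and is hit by honest geometric configurations. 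Indeed, for $(d,g) = (4,1)$ the curve $C\cup R$ lies on a quadric for \emph{every} choice of $R$ (the Remark following the lemma), so the configurations coming from elliptic quartics lie entirely inside your determinant locus --- the hypothesis $(d,g) \neq (4,1)$ must therefore do real work in precisely the step you leave open. The issue is compounded by the fact that the paper works over a field of arbitrary characteristic, where "tangent lines at general points are general" can genuinely fail (uniform position and reflexivity break down in characteristic $p$; in characteristic $2$ all tangent lines of a plane conic are concurrent), so even the heuristic needs care.

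The paper closes exactly this gap by degeneration rather than by a generality claim. Since $N_{C\cup R}|_R$ depends on $C$ only through the points $p_i$ and the tangent directions $q_i$, and since balancedness is an open condition on an irreducible family, it suffices to exhibit one (possibly reducible) Brill--Noether curve per $(d,g)$ with a good configuration: the paper peels off $1$-secant lines, $2$-secant lines, and $4$-secant conics via Lemma~\ref{lem:rightcomp} to reduce to $(4,0)$, $(5,2)$, $(6,4)$; disposes of $(5,2)$ and $(6,4)$ by specializing to a curve of type $(3,1)$ union lines of type $(0,1)$ on a quadric; and settles $(4,0)$ by an explicit computation with a rational normal curve $C'$ plus a $1$-secant line, where the balancedness $N_{C'} \simeq \O_{\pp^1}(5)^{\oplus 2}$ is what legitimizes choosing the tangent directions at the three points $z_i$ to be general. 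Your determinant criterion could serve as the endpoint of such an argument --- you would only need one explicit configuration in each case with nonzero determinant --- but as written the proposal stops where the proof has to begin.
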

\begin{proof}
We will prove this lemma by degeneration of $C$.
If $C$ admits a degeneration to $X \cup Y$, where $\deg X \geq 4$, then we may consider degenerations $X \cup Y \cup R$ of $C \cup R$ where the conic $R$ meets $X$ alone; this reduces the case of $C$ to the case of $X$.

By repeatedly applying Lemma~\ref{lem:rightcomp} to
pull off $1$-secant lines, $2$-secant lines,
or $4$-secant conics, we thus reduce to the case where
$(d, g)$ satisfies
\begin{equation} \label{foo}
\rho(g,3,d) \geq 0, \quad g \geq 0, \quad \text{and} \quad (d, g) \neq (3, 0), (4, 1),
\end{equation}
but $(d', g')$ fails to satisfy \eqref{foo}
for each of $(d', g') = (d - 1, g), (d - 1, g - 1)$, and $(d - 2, g - 3)$.

By inspection, this is only possible if $(d, g) = (4, 0), (5, 2),$ or $(6, 4)$.
(Indeed, if $g \geq 5$, then $(d', g') = (d - 2, g - 3)$
satisfies \eqref{foo};
if $g \leq 4$ and $d \geq 7$, then $(d', g') = (d - 1, g)$
satisfies \eqref{foo};
the finitely many cases with $g \leq 4$ and $d \leq 6$ are easily
verified.)

In these cases, $C$ is of type $(3, d - 3)$ on a quadric.
Specializing $C$ to the union of a curve of type $(3, 1)$
with $d - 4$ lines of type $(0, 1)$,
it thus remains only to consider the case $(d, g) = (4, 0)$.

When $C$ is a rational quartic curve, we specialize $C$ to
$C' \cup L$ where $C'$ is a rational normal curve and $L$ is a $1$-secant
line meeting $C'$ at a point $x$. Since $C$ has degree $4$, we must specialize $R$ to meet $L$
in one point $y$ and $C'$
in a set $\{z_1, z_2, z_3\}$
of three points:

\begin{center}
\begin{tikzpicture}[scale=2]
\draw (1, 2) .. controls (0.5, 2) and (-0.5, 1.5) .. (0, 1);
\draw (0, 1) .. controls (1, 0) and (1, 2) .. (0.1, 1.1);
\draw (-0.1, 0.9) .. controls (-0.5, 0.5) and (0.5, -0.3) .. (1, -0.3);
\draw (0.4838, 0.22) .. controls (-0.0362, 1.3328) and (0.67, 1.6628) .. (1.19, 0.55);
\draw (0.4838, 0.22) .. controls (1.0038, -0.8928) and (1.71, -0.5628) .. (1.19, 0.55);
\draw (0.85, 0.5) node{$L$};
\draw (1.35, 0.43) node{$R$};
\draw (1.12, 2) node{$C'$};
\filldraw (0.793, -0.263) circle[radius=0.02];
\filldraw (0.31, 0.767) circle[radius=0.02];
\filldraw (0.745, 1.155) circle[radius=0.02];
\draw (0.8, 0.5) -- (-0.3, 0.61);
\filldraw (0.36, 0.544) circle[radius=0.02];
\filldraw (-0.18, 0.598) circle[radius=0.02];
\draw (0.72, -0.33) node{$z_3$};
\draw (0.3, 0.45) node{$y$};
\draw (-0.25, 0.52) node{$x$};
\draw (0.42, 0.81) node{$z_1$};
\draw (0.84, 1.22) node{$z_2$};
\end{tikzpicture}
\end{center}

Since $N_{C'} \simeq \O_{\pp^1}(5) \oplus \O_{\pp^1}(5)$,
we may arrange for $C'$ to have general tangent directions
at the points $z_i$. Thus, $N_{C' \cup R}|_R \simeq \O_{\pp^1}(5) \oplus \O_{\pp^1}(4)$.
In particular, we have a distinguished subspace of $N_R|_y$ given by the positive subbundle
$\O_{\pp^1}(5)|_y \subset N_{C' \cup R}|_y \simeq N_R|_y$ ---
or equivalently, a distinguished plane $\Lambda \supset T_y R$.
Since $x \in C'$ is general, we have $x \notin \Lambda$.
Thus
\[N_{C' \cup L \cup R}|_R \simeq N_{C' \cup R}|_R(y)[y \to x] \simeq \O_{\pp^1}(5) \oplus \O_{\pp^1}(5). \qedhere\]
\end{proof}

\begin{rem}
For $(d, g) = (4, 1)$, the conclusion
of Lemma~\ref{lem:4_sec_conic} is false: For any $R$, the curve $C$ lies on a quadric $Q$ containing $R$, and $N_{(C \cup R)/Q}|_R$ is destabilizing.
\end{rem}

\noindent
Let $p_i'$ be a point on $T_{p_i}R \smallsetminus p_i$.  Then by Lemma \ref{lem:4_sec_conic} combined with Lemma~\ref{lem:naive}, stability for 
\[N_C[p_1 \to p_1'][p_2 \to p_2'][p_3 \to p_3'][p_4 \to p_4']\]
implies stability for $N_{C\cup R}$, and hence for the normal bundle of a general Brill--Noether space curve of degree $d+2$ and genus $g+3$.

\subsection*{\boldmath Deformations of $r$-secant rational curves}

In our application of the above degeneration to reduce to a finite list of genera, we will specialize to the union of a Brill--Noether \(D\) and two quasitransverse \(4\)-secant conics through \emph{the same} set of \(4\) points.  
To employ this degeneration, we must know that such conics can be suitably deformed while preserving the incidence conditions with $D$.

In greater generality, let 
$D$ be a Brill--Noether curve, and $R$ be a rational curve meeting $D$
at distinct points $p_1, p_2, \ldots, p_r$.
%
The following key assumption generalizes the conclusion of
Lemma~\ref{lem:4_sec_conic}:

\begin{assumption}\label{assump:balanced_slope}
The restricted normal bundle
$N_{D\cup R}|_R$
is perfectly balanced with slope
\[\mu(N_{D \cup R}|_R) \geq r + 1.\]
\end{assumption}

\begin{lem}\label{lem:deform_delta}
Under assumption \ref{assump:balanced_slope}, there exists
a deformation $R(t)$ of $R$, and $p_i(t)$ of $p_i$,
such that the rational curve $R(t)$ meets $D$ quasi-transversely in $p_1(t), p_2(t), \ldots, p_r(t)$,
and $p_i(t)$ has nonzero derivative at $t = 0$ for all $i$.
\end{lem}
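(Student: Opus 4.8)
The plan is to produce the family $R(t)$ by integrating a single first-order deformation of $R$ that preserves incidence with $D$ and moves all $r$ intersection points at once. First I would set up the infinitesimal theory. A first-order deformation of $R$ is a section $s \in H^0(N_R)$; working in a local coordinate near a point $p_i$, with $T_{p_i}R$ and $T_{p_i}D$ chosen as distinct coordinate axes (here I use that $R$ meets $D$ quasi-transversely at $p_i$, so that the image below is genuinely a line), a direct computation shows that the deformation keeps $R$ meeting $D$ near $p_i$ precisely when the image of $s(p_i)$ in $N_R|_{p_i}$ lies in
\[\ell_i := \Im\big(T_{p_i}D \to N_R|_{p_i}\big),\]
and that under such a deformation the intersection point $p_i$ moves with nonzero derivative exactly when $s(p_i)$ has nonzero image in $N_R|_{p_i}$. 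Thus the incidence-preserving first-order deformations are the sections of the subsheaf
\[M := \{\, s \in N_R : s(p_i)\in \ell_i \text{ for all } i \,\} \subseteq N_R,\]
and I want a section of $M$ whose image in $N_R|_{p_i}$ is nonzero for every $i$.

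The next step is to compute $M$ from Assumption \ref{assump:balanced_slope}. Since the pointing subbundle $N_{R\to q_i}$ has fiber $\ell_i$ at $p_i$, twisting and elementary modification commute to give $M = N_R[\{p_i \to N_{R\to q_i}\}]$; combined with Lemma \ref{lem:hh} this yields the identification
\[M \simeq \big(N_{D\cup R}|_R\big)(-p_1 - \cdots - p_r).\]
Since $N_{D\cup R}|_R$ is perfectly balanced of slope $\geq r+1$, twisting down by the degree-$r$ divisor $p_1 + \cdots + p_r$ shows that $M$ is perfectly balanced of slope $\geq 1$, so every summand of $M$ has degree $\geq 1$. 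In particular $M$ is globally generated and $H^1(M)=0$.

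With this in hand I would finish as follows. Standard deformation theory shows that the space of deformations of $R$ preserving incidence with $D$ is governed by $M$, with obstructions lying in $H^1(M)=0$; hence this space is smooth at $[R]$ with tangent space $H^0(M)$, and any section of $M$ integrates to an honest one-parameter family $R(t)$. To move all points simultaneously, consider the evaluation
\[\ev \colon H^0(M) \longrightarrow \bigoplus_{i=1}^r \ell_i, \qquad s \longmapsto \big(\text{image of } s(p_i)\text{ in } N_R|_{p_i}\big)_i.\]
Global generation of $M$ implies each component map $H^0(M)\to \ell_i$ is surjective, so the image of $\ev$ is a linear subspace contained in no coordinate hyperplane; therefore a general $s\in H^0(M)$ has all components of $\ev(s)$ nonzero. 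Integrating such an $s$ produces $R(t)$ and $p_i(t)$ with $p_i'(0)\neq 0$ for all $i$, and since meeting quasi-transversely at $r$ distinct points is an open condition, $R(t)$ meets $D$ quasi-transversely for small $t$.

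The main obstacle is the bundle-theoretic heart of the argument: the identification $M \simeq (N_{D\cup R}|_R)(-p_1-\cdots-p_r)$ and the extraction, from Assumption \ref{assump:balanced_slope}, that $M$ is globally generated with $H^1(M)=0$ — this is exactly where perfect balancedness and the slope bound $\geq r+1$ enter, and it is what simultaneously yields unobstructedness and the freedom to move every point. The subtlest points to get right are the local analysis distinguishing which sections preserve incidence from which ones actually move the node (equivalently, pinning down the line $\ell_i$ and the kernel of $M|_{p_i}\to N_R|_{p_i}$), and the linear-algebra observation that surjectivity onto each $\ell_i$ separately already suffices to avoid all the coordinate hyperplanes at once.
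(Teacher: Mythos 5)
Your proof is correct and follows essentially the same route as the paper: both arguments identify the deformations of $R$ preserving incidence with $D$ as governed by a sheaf isomorphic (up to the relevant twist) to $N_{D \cup R}|_R(-p_1 - \cdots - p_r)$, and both extract the needed cohomology vanishing from Assumption~\ref{assump:balanced_slope} by twisting the perfectly balanced bundle of slope $\geq r+1$ down by degree $r+1$. The only cosmetic difference is that the paper lifts a prescribed motion of each point $p_i$ separately, with obstructions in $H^1(N_i(-p_1 - \cdots - p_r))$ for auxiliary glued bundles $N_i$, whereas you move all points simultaneously via a general section of the single sheaf $M$; the paper's vanishing $H^1\big(N_{D \cup R}|_R(-p_1 - \cdots - 2p_i - \cdots - p_r)\big) = 0$ is precisely the surjectivity of your component evaluation maps $H^0(M) \to \ell_i$.
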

%

\begin{proof}
For any $i$, let $N_i$ denote the vector bundle on $R$ obtained by making elementary modifications
to \(N_R\) at all points of \(D \cap R\) except \(p_i\) in the direction of \(D\) (i.e.\ the vector
bundle obtained by gluing the vector bundles
$N_{R \cup D}|_{R \smallsetminus p_i}$ and $N_R|_{R \smallsetminus \{p_1, \ldots, \hat{p_i}, \ldots, p_r\}}$ along the natural isomorphism 
$N_{R \cup D}|_{R \smallsetminus \{p_1, \ldots, p_r\}} \simeq N_R|_{R \smallsetminus \{p_1, \ldots, p_r\}}$).  This bundle \(N_i\) controls the deformations of \(D \cup R\) along \(D\) that remain equisingular at \(p_i\) (c.f., the discussion in Section~\ref{sec:defs}).
Obstructions to lifting deformations of $p_i$ to deformations of $R$ that preserve the incidence conditions
with $D$ at the $p_j$
lie in $H^1(N_i(-p_1-\cdots-p_r))$; it thus suffices to show
\[H^1(N_i(-p_1-\cdots - p_r)) = 0.\]
The bundle $N_i(-p_1-\cdots - p_r)$ fits in an exact sequence
\[0 \to N_{R \cup D}|_R(-p_1- \cdots - p_{i - 1} - 2 p_i - p_{i+1} - \cdots - p_r) \to N_i(-p_1-\cdots - p_r) \to \O_{p_i} \to 0,\]
The long exact sequence of cohomology implies the desired vanishing since by assumption 
$N_{D \cup R}|_R$ is perfectly balanced with slope
$\mu(N_{D \cup R}|_R) \geq r + 1$, hence
\[H^1(N_{R \cup D}|_R(-p_1- \cdots - p_{i - 1} - 2 p_i - p_{i+1} - \cdots - p_r)) = 0. \qedhere\] 
\end{proof}

\subsection*{Reduction to a finite list of genera}

\begin{lem}\label{lem:pancake}
Suppose that Theorem \ref{thm:main} is true for all $g\leq 8$.  Then it is true for all $g$.
\end{lem}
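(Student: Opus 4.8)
The plan is to prove the lemma by strong induction on the genus, using the $4$-secant conic degeneration developed in this section to descend from genus $g$ to genus $g-3$. The first observation is that for $g \geq 9$ the content of Theorem~\ref{thm:main} collapses to a single assertion: since $g \geq 9 > 1$, part~(2) is vacuous; since the exceptional pairs $(5,2)$ and $(6,4)$ have genus $<9$ and the characteristic-$2$ instability requires $g=0$, part~(3) is also vacuous. Hence for $g \geq 9$ it suffices to prove that $N_{C'}$ is stable for $C'$ a general Brill--Noether space curve of degree $d$ and genus $g$. I would therefore set up the induction so that the base cases $g \leq 8$ are covered by hypothesis, and for $g \geq 9$ reduce the stability of $N_{C'}$ to a statement in genus $g-3$. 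Note that $g-3 \geq 6$, so the target genus is safely inside the stable range (avoiding the genera $\leq 4$ where the exceptional and low-genus phenomena occur); moreover $\rho(g-3,3,d-2) = \rho(g,3,d)+1 \geq 0$, so the reduced pair $(d-2,g-3)$ is again a valid Brill--Noether pair.

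For the reduction itself I would degenerate $C'$ to $C \cup R$, where $C$ is a general Brill--Noether space curve of degree $d-2$ and genus $g-3$ and $R$ is a $4$-secant conic meeting $C$ quasi-transversely at four coplanar points $p_1,\ldots,p_4$. By Lemma~\ref{lem:rightcomp}\eqref{lem:rightcomp_4sec} the union $C \cup R$ lies in the Brill--Noether component, and by Lemma~\ref{lem:4_sec_conic} we have $N_{C \cup R}|_R \simeq \O_{\pp^1}(5) \oplus \O_{\pp^1}(5)$, which is perfectly balanced of slope $5$, meeting the hypothesis $\mu \geq r+1 = 5$ of Assumption~\ref{assump:balanced_slope} with $r=4$; thus Lemma~\ref{lem:deform_delta} guarantees that $R$ deforms while preserving the four incidences. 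Consequently $C \cup R$ deforms to a general smooth Brill--Noether curve of genus $g$, and by Proposition~\ref{prop:stab-open}\eqref{stab-open} it is enough to prove stability of $N_{C \cup R}$. Combining Lemma~\ref{lem:4_sec_conic} with Lemma~\ref{lem:naive}, and using that twisting by a line bundle commutes with elementary modification, this reduces to the stability of the modified bundle
\[M \colonequals N_C[p_1 \to p_1'][p_2 \to p_2'][p_3 \to p_3'][p_4 \to p_4'],\]
with $p_i' \in T_{p_i}R \smallsetminus p_i$, exactly as recorded after Lemma~\ref{lem:4_sec_conic}.

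It then remains to prove that $M$ is stable, and here I would invoke the inductive hypothesis in genus $g-3$: the unmodified bundle $N_C$ is stable, so every line subbundle has degree at most $\mu(N_C)-1 = 2(d-2)+(g-3)-2$. Since each of the four modifications lowers the slope by $\tfrac12$, we have $\mu(M) = \mu(N_C)-2$, and stability of $M$ asks that every line subbundle have degree at most $\mu(M)-1 = 2(d-2)+(g-3)-4$. Given a line subbundle $L' \subset M$, I would saturate it to a line subbundle $L \subset N_C$ and compare degrees: writing $\ell_i = N_{C \to p_i'}|_{p_i}$ for the modification direction at $p_i$, one has $\deg L' = \deg L - \#\{\,i : L|_{p_i} \neq \ell_i\,\}$. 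If at least two of the fibers $L|_{p_i}$ differ from $\ell_i$, the stability bound on $\deg L$ immediately yields $\deg L' \leq \mu(M)-1$, and we are done.

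The main obstacle is therefore the complementary case, in which a line subbundle $L$ of (near-)maximal degree agrees with the conic's tangent direction $\ell_i$ at three or more of the points $p_i$---the sharpest instance being a maximal-degree subbundle aligned at exactly three points. To rule this out I would exploit the freedom in the degeneration: since by Proposition~\ref{prop:stab-open} it suffices to exhibit a single good configuration, I would argue that for a general Brill--Noether curve $C$, a general secant plane $H$, and a general conic $R$ through the four chosen points of $C \cap H$, no (near-)maximal-degree line subbundle of $N_C$ is aligned with the tangent directions $\ell_i = T_{p_i}R$ at three of the four points. Concretely, the maximal-slope line subbundles of the stable bundle $N_C$ form a bounded family, and imposing agreement of such a subbundle's fiber with the prescribed general direction $\ell_i$ is a codimension-one condition in the $\pp^1$ of directions at $p_i$; tracking these conditions against the parameters of the pair $(H,R)$ and using the genericity of $C$ to control---and ideally eliminate---maximal-degree subbundles altogether is the delicate point, and I expect it to be the crux of the argument. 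Once this alignment is excluded, every line subbundle of $M$ satisfies the required degree bound, so $M$---and hence $N_{C'}$---is stable, completing the induction.
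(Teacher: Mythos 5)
Your reduction is fine as far as it goes: the single $4$-secant conic degeneration $(d,g) \rightsquigarrow (d-2,g-3)$, the verification that $(d-2,g-3)$ is a valid Brill--Noether pair avoiding the exceptional cases, and the reduction via Lemmas~\ref{lem:rightcomp}, \ref{lem:4_sec_conic}, \ref{lem:deform_delta}, \ref{lem:naive} and Proposition~\ref{prop:stab-open} to the stability of $M = N_C[p_1 \to p_1'][p_2 \to p_2'][p_3 \to p_3'][p_4 \to p_4']$ all match the framework of the paper. But the final step is a genuine gap, and you correctly sense it is the crux: stability of $N_C$ only bounds saturations $L \subset N_C$ by $\deg L \leq \mu(N_C)-1$, which leaves open exactly the case of a near-maximal subbundle aligned with the modification directions at three or four of the $p_i$, and your proposed genericity argument to exclude this does not close. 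The problem is that the data $(p_i, \ell_i)$ is far from free: the four points are coplanar points of $C \cap H$, and the directions $\ell_i$ are the tangent directions of a \emph{single} conic through them, so as $R$ varies in the pencil of conics through $p_1,\dots,p_4$ the $4$-tuple $(\ell_1,\dots,\ell_4)$ moves in only a one-parameter family; meanwhile the family of near-maximal line subbundles of $N_C$ is bounded but not otherwise controlled, and varying $H$ moves the points $p_i$ along with it, so there is no clean parameter count that rules out alignment. Nothing in the paper's toolkit up to this point lets you exclude it directly.

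The paper sidesteps this alignment problem entirely with a different degeneration: it drops the genus by $6$ rather than $3$, attaching \emph{two} general $4$-secant conics $R_1, R_2$ through the \emph{same} four points $p_1,\dots,p_4$ of a curve $D$ of degree $d-4$ and genus $g-6$ (possible since $g \geq 9$ gives $g-6 \geq 2$ and $\rho(g-6,3,d-4) = \rho(g,3,d)+2 \geq 0$). Using Lemma~\ref{lem:deform_delta}, the conics are deformed so that $R_i(t)$ meets $D$ at points $p_{ij}(t)$ with $p_{1j}(t)$ and $p_{2j}(t)$ having distinct derivatives at $t=0$. By Lemmas~\ref{lem:naive} and \ref{lem:4_sec_conic} it suffices to prove stability of $N_D[p_{ij}(t) \to p_{ij}'(t)]$ for generic $t$, and the point of the construction is that as $t \to 0$ these eight modifications collide in pairs with transverse directions, so (by the discussion in Remark~3.4 of \cite{aly}) the bundles fit into a family over $\Delta$ whose central fiber is the \emph{twist} $N_D(-p_1-p_2-p_3-p_4)$. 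A twist of the stable bundle $N_D$ is automatically stable, and openness of stability (Proposition~\ref{prop:stab-open}) finishes the proof --- no analysis of which subbundles align with which directions is ever needed. If you want to salvage your single-conic route, you would need an independent proof that a $4$-point modification of a stable normal bundle at constrained directions remains stable, which is precisely the difficulty the two-conic trick was designed to avoid.
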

\begin{proof}
If $\rho(g,3,d) \geq 0$ and $g \geq 9$, then
\begin{equation}\label{rho}\rho(g-6, 3, d-4) = \rho(g,3, d) + 2 \geq 0 \quad \text{and} \quad  \ g - 6 \geq 2 \quad \text{and} \quad (d - 4, g - 6) \notin \{(5, 2), (6, 4)\},\end{equation}
\begin{equation}\label{dminus4}
\text{and}\qquad d-4 \geq 4.
\end{equation}

By \eqref{rho}, a general Brill--Noether curve $D$ of degree $d-4$ and genus $g-6$ has $N_D$ stable by induction. Let $H$ be a general hyperplane; by \eqref{dminus4}, we may let $R_1 \subseteq H$ and $R_2 \subseteq H$ be general $4$-secant conics,
both of which meet $D$ at $p_1, \ldots, p_4$:

\begin{center}
\begin{tikzpicture}[scale=.8]
{\small
\draw (0, 0) ellipse (2 and 1);
\draw (0, 0) ellipse (1 and 2);
\filldraw (0.892, 0.892) circle[radius=0.03];
\filldraw (0.892, -0.892) circle[radius=0.03];
\filldraw (-0.892, 0.892) circle[radius=0.03];
\filldraw (-0.892, -0.892) circle[radius=0.03];
\draw (0.892, 0.892) .. controls (0, 0) .. (-0.892, 0.892);
\draw (0.892, -0.892) .. controls (0, 0) .. (-0.892, -0.892);
\draw (-0.892, 0.892) .. controls (-3, 3) and (-3, -3) .. (-0.892, -0.892);
\draw (0.892, 0.892) .. controls (1.5, 1.5) .. (1.75, 2);
\draw (0.892, -0.892) .. controls (1.5, -1.5) .. (1.75, -2);
\draw (0, -2.25) node{$R_2$};
\draw (2.25, 0) node{$R_1$};
\draw (1.9, 2) node{$D$};
\draw (1.03, 1.3) node{$p_1$};
\draw (1.03, -1.3) node{$p_4$};
\draw (-1, 1.3) node{$p_2$};
\draw (-1, -1.3) node{$p_3$};
}
\end{tikzpicture}
\end{center}

By Lemma~\ref{lem:deform_delta}, we may deform $R_i$ to $4$-secant
conics $R_i(t)$ meeting $D$ at $p_{i1}(t)$, $p_{i2}(t)$, $p_{i3}(t)$, and $p_{i4}(t)$,
such that $p_{1j}(t)$ and $p_{2j}(t)$ have distinct derivatives:
\begin{center}
\begin{tikzpicture}[scale=0.5]
\draw (0, 0) ellipse (2 and 1);
\draw[densely dotted] (0, 0) ellipse (2.2 and 1.1);
\draw (0, 0) ellipse (1 and 2);
\draw[densely dotted] (0, 0) ellipse (0.9 and 1.8);
\draw (0.892, 0.892) .. controls (0, 0) .. (-0.892, 0.892);
\draw (0.892, -0.892) .. controls (0, 0) .. (-0.892, -0.892);
\draw (-0.892, 0.892) .. controls (-3, 3) and (-3, -3) .. (-0.892, -0.892);
\draw (0.892, 0.892) .. controls (1.5, 1.5) .. (1.75, 2);
\draw (0.892, -0.892) .. controls (1.5, -1.5) .. (1.75, -2);
\end{tikzpicture}
\end{center}
Combining lemmas~\ref{lem:naive} and \ref{lem:4_sec_conic},
it remains to show the stability of
$N_C[p_{ij}(t) \to p_{ij}'(t)]$ for $t \in \Delta$ general,
where $p_{ij}'(t)$ denotes a point on $T_{p_{ij}(t)} C \smallsetminus p_{ij}(t)$.
By the discussion in Section \ref{sec:mod_in_fam}, these vector bundles fit together
to form a vector bundle over $D \times \Delta$ whose fiber over $0 \in \Delta$
is the bundle $N_D(-p_1-p_2-p_3-p_4)$ --- which is stable since we have already seen that
$N_D$ is stable by induction.
\end{proof}

%

\section{Stability and Degeneration II: Gluing Data}\label{sec-gluing}

In order to settle the base cases $g \leq 8$, we will need to use degenerations of $C$
to reducible curves $X \cup Y$ where neither $N_{X \cup Y}|_X$ nor $N_{X \cup Y}|_Y$
are necessarily stable.
The basic idea is to compare destabilizing subbundles of $N_{X \cup Y}|_X$ and $N_{X \cup Y}|_Y$,
and show that they cannot agree sufficiently over $X \cap Y$.

\subsection*{\boldmath $1$-secant degenerations}

In some cases, 
we can construct a \emph{modification} of the restriction $N_{X \cup Y}|_X$
whose stability rules out a destabilizing subbundle of $N_{X \cup Y}|_X$ that could agree sufficiently with a destabilizing
subbundle of $N_{X \cup Y}|_Y$. 
This technique works well when we can understand the geometry of $Y$ explicitly.
Here we apply this technique
when $Y = L$ is a $1$-secant line.

Let $D$ be a smooth Brill--Noether curve and $L$ a quasi-transverse $1$-secant line meeting $D$ at $p$.
Although $N_{D \cup L}|_L$ is not semistable, so we cannot apply Lemma~\ref{lem:naive},
we can identify the unique destabilizing subbundle of $N_{D \cup L}|_L$,
and construct a modification of $N_{D \cup L}|_D$ as described above.

For inductive arguments it will be more useful to consider a slightly more general setup:
Let $N_{D \cup L}'$ be any vector bundle equipped with an isomorphism
with $N_{D \cup L}$ over an open set $U$ of $D \cup L$ containing $L$,
and write $N_D'$ for the bundle obtained by gluing $N_D|_U$ to $N_{D \cup L}'|_{D \smallsetminus p}$
along the isomorphism
$N_D|_{U \smallsetminus p} \simeq N_{D \cup L}|_{U \smallsetminus p} \simeq N_{D \cup L}'|_{U \smallsetminus p}$.
To state the lemma, let $q \in L \smallsetminus p$.

\begin{center}
\begin{tikzpicture}[scale=1]
\draw (1, 3.5) .. controls (0.5, 3.5) and (-1, 3) .. (0, 2);
\draw (0, 2) .. controls (0.5, 1.5) and (1, 1) .. (2, 1);
\draw (0.1, 2.1) .. controls (0.5, 2.5) and (1, 3) .. (2, 3);
\draw (1, 0.5) .. controls (0.5, 0.5) and (-1, 1) .. (-0.1, 1.9);
\draw (2, 1) .. controls (4, 1) and (4, 3) .. (2, 3);
\draw (1.25,3.6) node{$D$};
\draw (3.3,3.3) -- (1.5,1.5);
\draw (3.3,3.3) node[right]{$L$};
\filldraw (2.84,2.84) circle[radius=0.03];
\draw (2.9,3.05) node{$p$};
\filldraw (2,2) circle[radius=0.03];
\draw (2.15,1.85) node{$q$};
\end{tikzpicture}
\end{center}

\begin{lem}\label{lem:1_sec_balanced}
In the above setup, 
if $N_D'[p \to q][p \to q] \simeq N'_D[2p \to q]$ is (semi)stable,
then $N'_{D \cup L}$ is also (semi)stable.
\end{lem}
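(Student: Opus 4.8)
The plan is to test (semi)stability directly on the normalization. Write $\nu\colon D \sqcup L \to D\cup L$ for the normalization and let $F \subseteq \nu^* N'_{D\cup L}$ be an arbitrary subbundle, with $F_D \colonequals F|_D$ and $F_L \colonequals F|_L$. The cases $\rk F \in \{0,2\}$ are immediate, so all the content is in $\rk F = 1$, where $F_D, F_L$ are line subbundles and
\[\mu^{\text{adj}}(F) = \deg F_D + \deg F_L - \delta, \qquad \delta \colonequals \codim_F\!\big(F_D|_p \cap F_L|_p\big) \in \{0,1\}.\]
Two applications of Lemma~\ref{lem:hh} supply the structural input: on the line, $N'_{D\cup L}|_L = N_{D\cup L}|_L \simeq N_L(p)[p\to q'] \simeq \O_L(2)\oplus\O_L(1)$ for $q'\in T_pD$, while on the curve the gluing definition of $N'_D$ is exactly arranged so that $N'_{D\cup L}|_D \simeq N'_D(p)[p\to q]$. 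From $\deg(N'_{D\cup L}|_D) = \deg N'_D + 1$ and $\deg(N'_{D\cup L}|_L)=3$ one gets $\mu(N'_{D\cup L}) = \tfrac12\deg N'_D + 2$, whereas $\mu(N'_D[2p\to q]) = \tfrac12\deg N'_D - 1$.

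On the line side, every line subbundle satisfies $\deg F_L \le 2$, with equality exactly when $F_L$ is the distinguished $\O_L(2) = N_{L\to q'}(p)$, whose fiber at $p$ is the image of $T_pD$. On the curve side, I would compare local models at $p$: choosing $e_1$ to span $N_{D\to q}|_p$ (the image of $T_pL$, since $q\in L$) and a complement $e_2$, one finds $N'_{D\cup L}|_D = \langle t^{-1}e_1,\, e_2\rangle$ and $N'_D[2p\to q] = \langle e_1,\, t^2 e_2\rangle$, so the latter is a subsheaf of the former of colength $3$, equal away from $p$. Saturating $F_D$ inside $N'_D[2p\to q]$ then produces a genuine line subbundle $G \subseteq N'_D[2p\to q]$ with $\deg G = \deg F_D - \epsilon$, where a one-line local computation gives $\epsilon = 1$ when $F_D|_p = \langle \overline{t^{-1}e_1}\rangle$ and $\epsilon = 2$ otherwise.

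Feeding the hypothesis $\deg G \leqpar \tfrac12\deg N'_D - 1$ into these relations yields
\[\mu^{\text{adj}}(F) \leqpar \tfrac12\deg N'_D - 1 + \big(\epsilon + \deg F_L - \delta\big),\]
so it suffices to establish the combinatorial inequality $\epsilon + \deg F_L - \delta \le 3$; combined with the $\leqpar$ coming from the hypothesis, strictness (hence stability versus semistability) then propagates automatically. Since $\epsilon\le 2$, $\deg F_L \le 2$ and $\delta \ge 0$, the only configuration that could violate this is the extremal one with $\deg F_L = 2$, $\delta = 0$, and $\epsilon = 2$, and ruling it out is the crux of the argument. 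In that configuration $F_L$ is forced to be $\O_L(2)$, whose fiber at $p$ is the class of $T_pD$ on the $L$-branch, while $\delta = 0$ forces $F_D|_p$ to equal this same class under the node identification. The key geometric point, which I expect to be the main obstacle, is that the degree-maximizing direction on each branch is the common ``in-plane'' normal direction, namely the image of the $2$-plane $\langle T_pD, T_pL\rangle$: on the $L$-branch this is the image of $T_pD$, on the $D$-branch it is the image of $T_pL = \langle \overline{t^{-1}e_1}\rangle$, and the gluing of $N'_{D\cup L}$ at the node identifies them. Hence $\delta = 0$ forces $F_D|_p = \langle\overline{t^{-1}e_1}\rangle$, so $\epsilon = 1$ rather than $2$, and $\epsilon + \deg F_L - \delta = 3$; the inequality holds (with equality, consistent with strict semistability being the boundary), completing the proof.
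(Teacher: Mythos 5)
Your proposal is correct and follows essentially the same route as the paper: bound $\deg F_L$ via the splitting $N'_{D \cup L}|_L \simeq \O_L(2) \oplus \O_L(1)$, bound $\deg F_D$ by comparison with $N'_D(p)[2p \to q]$ (your saturation computation with $\epsilon \in \{1,2\}$ is exactly the paper's case division according to whether $F|_{\tilde{p}_2}$ equals $N_{D \to q}(p)|_{\tilde{p}_2}$), and rule out the extremal configuration via the node-gluing identification of the two distinguished directions. The one step you assert rather than prove --- that the $\O_L(2)$-direction at the node glues to $N_{D \to q}(p)|_{\tilde{p}_2}$, both being the image of the plane spanned by $T_pD$ and $T_pL$ --- is precisely Lemma~8.5 of \cite{aly}, which the paper likewise invokes without proof, so your heuristic identifies exactly the needed fact (though a fully self-contained argument would still require the local computation there, since the twists and modifications at $p$ alter the fibers being compared).
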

\begin{proof}
Write $\nu \colon D \sqcup L \to D \cup L$ for the normalization map,
and $\tilde{p}_1$ and $\tilde{p}_2$ for the points above $p$ on $L$ and $D$
respectively.
Suppose that $F \subseteq \nu^* N'_{D\cup L}$ is a line subbundle.

First, we consider the restriction of $F$ to $L$.
Let $x$ be a point on $T_pD$ and let $\Lambda$ be the plane spanned by $x$ and $L$.  Let $H$ be another plane such that $L = \Lambda \cap H$.  Then by Lemma \ref{lem:hh} and Example \ref{ex:line}, 
\[N_{D\cup L}'|_L \simeq N_L(p)[p \to x] \simeq  N_{L/H} \oplus N_{L/\Lambda}(p) \simeq  \O_{\pp^1}(1) \oplus \O_{\pp^1}(2).\]
Consequently,
\begin{equation} \label{fl}
\mu(F|_L) \leq \begin{cases}
2 & \text{if $F|_{\tilde{p}_1} = N_{L/\Lambda}(p)|_{\tilde{p}_1}$;} \\
1 & \text{otherwise.}
\end{cases}
\end{equation}

Second, we consider the restriction of $F$ to $D$.
If $F|_{\tilde{p}_2} = N_{D \to q}(p)|_{\tilde{p}_2}$, then, by Remark \ref{rem:factoring}, $F|_D$ is a subbundle
of $N'_{D \cup L}|_D[p \to q] \simeq N'_D(p)[2p \to q]$;
otherwise $F|_D(-\tilde{p}_2)$ is a subbundle of $N'_D(p)[2p \to q]$. Because $N'_D[2p \to q]$
is (semi)stable by assumption and of slope $\mu(N_D') - 1$, it follows that $N'_D(p)[2p \to q]$
is (semi)stable of slope $\mu(N_D')$. Consequently,
\begin{equation} \label{fd}
\mu(F|_D) \leqpar \begin{cases}
\mu(N'_D) + 1 & \text{if $F|_{\tilde{p}_2} \neq N_{D \to q}(p)|_{\tilde{p}_2}$;} \\
\mu(N'_D) & \text{otherwise.}
\end{cases}
\end{equation}

Finally, by \cite[Lemma 8.5]{aly}, the subspace $N_{L/\Lambda}(p)|_{\tilde{p}_1}$ glues to the subspace $N_{D \to q}(p)|_{\tilde{p}_2}$.
Consequently,
\begin{equation} \label{fp}
\codim_{F} (F|_{\tilde{p}_1} \cap F|_{\tilde{p}_2}) \geq \begin{cases}
1 & \text{if $F|_{\tilde{p}_1} = N_{L/\Lambda}(p)|_{\tilde{p}_1}$ and $F|_{\tilde{p}_2} \neq N_{D \to q}(p)|_{\tilde{p}_2}$;} \\
0 & \text{otherwise.}
\end{cases}
\end{equation}
To finish the proof,
we simply combine \eqref{fl}, \eqref{fd}, and \eqref{fp}, to obtain
\[\mu^{\text{adj}}(F) = \mu(F|_L) + \mu(F|_D) - \codim_{F} (F|_{\tilde{p}_1} \cap F|_{\tilde{p}_2}) \leqpar \mu(N'_D) + 2 = \mu(N'_{D \cup L}). \qedhere\]
\end{proof}


\begin{lem}\label{lem:1_sec}
Assume that the characteristic of the ground field is not $2$.  Suppose that $N_D$ is (semi)stable.  If $q \in \pp^3$ is a general point and $p \in D$ has ordinary ramification, then the elementary modification $N_D[2p \to q]$ is (semi)stable.
\end{lem}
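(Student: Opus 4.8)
The plan is to compare line subbundles of $E := N_D[2p \to q]$ with line subbundles of $N_D$, reducing (semi)stability of $E$ to a transversality statement for the pointing section $N_{D\to q}$ that I then establish for general $q$. Note first that $E$ has slope $\mu(E) = \mu(N_D) - 1 = 2d + g - 2 \in \zz$. Given a line subbundle $M \subseteq E$, let $\bar M \subseteq N_D$ be the saturation of the image of $M$ under the inclusion $E \hookrightarrow N_D$, and let $k \in \zz_{\geq 0} \cup \{\infty\}$ be the order of contact at $p$ of the two sections of the $\pp^1$-bundle $\pp N_D$ cut out by $\bar M$ and by $N_{D\to q}$ (that is, the order of vanishing at $p$ of their difference). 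A local computation at $p$, in the spirit of the modification calculus of \cite[\S2--3]{aly}, gives
\[ \deg M = \deg \bar M - 2 + \min(k, 2). \]

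Since $N_D$ is (semi)stable, every line subbundle satisfies $\deg \bar M \leqpar \mu(N_D)$, the maximal permissible degree being $\mu(N_D)$ in the semistable case and $\mu(N_D) - 1$ in the stable case (as $\mu(N_D)$ is an integer). Feeding this into the displayed formula, the only way a subbundle $M \subseteq E$ can violate the (semi)stability bound $\deg M \leqpar \mu(E) = \mu(N_D) - 1$ is if $\bar M$ attains this maximal degree \emph{and} $k \geq 2$; in every other case $\deg M \leq \mu(N_D) - 2 \leqpar \mu(E)$. Thus it suffices to prove: for $q$ general, no sub-line-bundle $\bar M \subseteq N_D$ of maximal degree is tangent to $N_{D\to q}$ at $p$ (i.e.\ has contact order $k \geq 2$ there).

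To prove this I work with $1$-jets at $p$ of sections of $\pp N_D$, which form a $2$-dimensional space $J$; tangency of the pointing section $s_q$ (determined by $N_{D\to q}$) to a section $s_{\bar M}$ at $p$ is exactly the condition that their $1$-jets at $p$ coincide. The sub-line-bundles of maximal degree form a bounded family $\mathcal M$ — in the semistable case this family is at most one-dimensional (for instance, when $N_D \simeq L \oplus L$ it is the $\pp^1$ of constant sub-line-bundles, all of whose sections are \emph{constant} and so have vanishing derivative at $p$) — so their $1$-jets at $p$ sweep out a proper subvariety $\Sigma \subsetneq J$. The key analytic input, established in the last paragraph, is that for general $q$ the jet $j^1_p s_q$ is general in $J$, in particular with nonzero derivative, so that $j^1_p s_q \notin \Sigma$. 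Concretely, the incidence locus $\{(\bar M, q) : \bar M \in \mathcal M,\ k \geq 2\}$ is controlled by noting that for each fixed $\bar M$ the locus of such $q$ lies in the plane through $T_pD$ determined by $\bar M|_p$ and is one-dimensional there; combined with $\dim \mathcal M \leq 1$ this shows the bad locus projects to a proper closed subset of $\pp^3$. (The transversality being exploited here is the same phenomenon as in Lemma~\ref{lem:3pts}, now for an arbitrary subbundle in place of $N_{D/Q}$.)

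The crux, and the only place the characteristic enters, is the claim that $s_q'(p) \neq 0$ for general $q$. Parametrizing $D$ near $p$ by $\gamma(t)$ and writing $s_q(t)$ as the class of $q - \gamma(t)$ in $N_D|_{\gamma(t)} = T_{\pp^3}|_{\gamma(t)} / \langle \gamma'(t)\rangle$, one finds that the leading coefficient of $s_q'(0)$, viewed as a function of $q$, is a nontrivial linear form carrying an explicit factor of $2$ arising from the second-order Taylor coefficient $\gamma''(0)$. That $p$ has ordinary ramification guarantees $\gamma'(0)$ and $\gamma''(0)$ are independent, so this form is genuinely nonzero and hence nonvanishing for general $q$ \emph{in characteristic $\neq 2$}; in characteristic $2$ the factor of $2$ annihilates it, forcing $s_q'(p) = 0$ for every $q$ and making $s_q$ tangent at $p$ to a maximal constant sub-line-bundle. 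This is precisely the second-order degeneration responsible for the genuine characteristic-$2$ failures recorded in Lemma~\ref{lem:char2_obs}. The main obstacle to write out carefully is therefore this first-order computation of $s_q$ in terms of $q$: one must pin down $j^1_p s_q$ accurately enough to see simultaneously that it is general for general $q$ and that it avoids the $1$-jets coming from the bounded family $\mathcal M$ of maximal sub-line-bundles.
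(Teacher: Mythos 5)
Your overall strategy is sound, and it is genuinely different from the paper's. The paper never classifies subbundles at all: it specializes $q = q(s)$ along a pencil of lines through $p$ limiting to $T_pD$, shows via properness of $\operatorname{Gr}(2,4)$ and the same local expansion you sketch that the flat limit of the family $N_D[2p \to q(s)]$ is the twist $N_D(-p)$ (this is exactly where the coefficient $2$ and ordinary ramification enter: the limiting subspace of $N_D|_{2p}$ is $m_0 = n_0 = 0$ only when $2 \neq 0$), and then concludes by openness of (semi)stability (Proposition~\ref{prop:stab-open}). Your route instead makes the subbundle bookkeeping explicit: the contact-order formula $\deg M = \deg \bar M - 2 + \min(k,2)$ is correct, the reduction to ``no maximal-degree sub-line-bundle of $N_D$ is tangent to $N_{D \to q}$ at $p$'' is correct in both the stable and semistable cases, and your jet computation for the pointing section agrees with the paper's local calculation, including the diagnosis that $s_q'(p) \neq 0$ for general $q$ precisely when $\operatorname{char}(k) \neq 2$. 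What your approach buys, when complete, is a description of exactly which $q$ fail; what the paper's approach buys is that it sidesteps any statement about the family of maximal subbundles.

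That is also where your proposal has a genuine gap: the bound $\dim \mathcal{M} \leq 1$. In the semistable case your parenthetical only treats $N_D \simeq L \oplus L$, but that case is easily completed: if $M_1 \neq M_2$ are two subbundles of degree $\mu(N_D)$, then $M_1 \oplus M_2 \to N_D$ is an injective sheaf map between bundles of equal degree, hence an isomorphism, so either $N_D$ splits with at most two maximal subbundles, or $N_D \simeq L \oplus L$ with its $\pp^1$ of constant subbundles. In the stable case, however --- maximal degree $\mu(N_D) - 1$, i.e.\ Segre invariant $2$ --- this trick fails (the cokernel of $M_1 \oplus M_2 \to N_D$ has length $2$), and the naive tangent-space estimate $\operatorname{Hom}(\bar M, N_D/\bar M) = H^0$ of a degree-$2$ line bundle only gives $\dim \mathcal{M} \leq 2$, with potential equality exactly when $(N_D/\bar M) \otimes \bar M^{-1}$ is a $g^1_2$ --- a live possibility on the genus-$2$ curves to which the lemma is actually applied in the paper. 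The bound is not a removable convenience: if a one-parameter family of fiber directions $\ell \in \pp N_D|_p$ each carried a one-parameter family of maximal subbundles realizing all derivatives, your incidence count would produce bad $q$'s filling a dense subset of $\pp^3$, not a proper closed one. So in the stable case you must either invoke a Lange--Narasimhan-type theorem (the variety of maximal subbundles of a stable rank-$2$ bundle with Segre invariant $s$ has dimension at most $s - 1$) or supply an ad hoc argument; as written, that case of your proof is unproven, whereas the paper's degeneration handles stable and semistable uniformly.
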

\begin{proof}
Let $\Lambda\subset \pp^3$ be a $2$-plane containing $T_pD$ that is not the osculating $2$-plane to $D$ at $p$.  For parameter $s \in \pp^1$, let $L_s$ be the pencil of lines through $p$ in $\Lambda$ specializing to $T_pD$ when $s=0$ and let $q(s)$ be a choice of point on $L_s \smallsetminus p$.

\begin{center}
\begin{tikzpicture}[scale=1]
\draw (1, 3.5) .. controls (0.5, 3.5) and (-1, 3) .. (0, 2);
\draw (0, 2) .. controls (0.5, 1.5) and (1, 1) .. (2, 1);
\draw (0.1, 2.1) .. controls (0.5, 2.5) and (1, 3) .. (2, 3);
\draw (1, 0.5) .. controls (0.5, 0.5) and (-1, 1) .. (-0.1, 1.9);
\draw (2, 1) .. controls (4, 1) and (4, 3) .. (2, 3);
\draw (1.25,.5) node{$D$};
\draw (1.5,3.7)  -- (4.5, 1.855);
\draw (4.5, 1.855) node[below]{$T_pD$};
\filldraw (3.05,2.74675) circle[radius=0.03];
\draw (3.05,2.74675) node[above]{$p$};
\draw[densely dotted] (3.05 +1.5, 1.99675) -- (3.05-1.5, 3.49675);
\draw[densely dotted] (3.05 +1.5, 2.14675) -- (3.05-1.5, 3.34675);
\draw[densely dotted] (3.05 +1.5, 2.29675) -- (3.05-1.5, 3.19675);
\draw[densely dotted] (3.05 +1.5, 2.44675) -- (3.05-1.5, 3.04675);
\end{tikzpicture}
\end{center}

As (semi)stability is open, and $N_D(-p)$ is (semi)stable by assumption,
it suffices to show that the modifications $N_D[2p \to q(s)]$ for $s\neq 0$ fit together into a flat family specializing to $N_D(-p)$ when $L_s = T_pD$.
To do this, we first observe that, for $s\neq 0$,
\[N_D[2p \to q(s)] \colonequals \ker \left( N_D \to \frac{N_D|_{2p}}{N_{D \to q(s)}|_{2p}} \right) \]
is determined by the $2$-dimensional subspace $N_{D \to q(s)}|_{2p}$  of the $4$-dimensional space $N_D|_{2p}$.  As the Grassmannian $\operatorname{Gr}(2,4)$ is separated and proper, there is a unique limit of these spaces as $s \to 0$.
It suffices to prove, by a calculation in local coordinates,
that this subspace is $N_D(-p)|_{2p} \subseteq N_D|_{2p}$.

Choose an affine neighborhood $ \aa_{xyz}^3 \subseteq \pp^3$ and coordinates such that $p = (0,0,0)$, the tangent line $T_pD$ is $y = z = 0$, the osculating two-plane is $z = 0$, and $\Lambda$ is $y = 0$.  Let $q(s) = (1, 0, s)$ so that $L_s$ is the line through $(1,0,s)$ and $(0,0,0)$.

Let $t$ be an \'etale local coordinate at $p$ for $D$.  Then in an \'etale neighborhood of $p$, the curve $D$ is given parametrically by
\[D(t) = \begin{pmatrix} t \\ t^2 + a_3 t^3 + \cdots \\ b_3 t^3 + \cdots \end{pmatrix}.\]
We trivialize $N_D$ in a neighborhood of $p$ by $\del/\del y$ and $\del/\del z$.  A section of $N_D$ is then given by
\begin{equation}\label{section_mn} (m_0 + m_1 t + m_2 t^2 + \cdots ) \frac{\del}{\del y} +  (n_0 + n_1 t + n_2 t^2 + \cdots ) \frac{\del}{\del z}. \end{equation}
We must determine the conditions on the $m_i$ and $n_i$ such that this section points towards $q(s)$ to second order in $t$.  The vector from $D(t)$ on $D$ to $q(s)$
\[ D(t) - q(s)= \begin{pmatrix} t-1 \\ t^2 + a_3t^3 + \cdots \\ b_3t^3 + \cdots - s \end{pmatrix}\]
is equivalent as a section of $N_D$ to its translate by a tangent vector
\[ D(t) - q(s) - (t-1)D'(t) = \begin{pmatrix} t-1 \\ t^2 + a_3t^3 + \cdots \\ b_3t^3 + \cdots - s \end{pmatrix} -  \begin{pmatrix} t-1 \\ (t-1)(2t + 3a_3t^2 + \cdots) \\ (t-1)(3b_3t^2 + \cdots) \end{pmatrix} = \begin{pmatrix} 0 \\ 2t + (3a_3 - 1)t^2 + \cdots \\ -s - 3b_3t^2 + \cdots \end{pmatrix}. \]
This normal vector now corresponds to the section
\[(2t + (3a_3 - 1)t^2 + \cdots) \frac{\del}{\del y} + (-s - 3b_3t^2 + \cdots) \frac{\del}{\del z} \]
under our chosen trivialization.  The condition on the $m_i$ and $n_i$ for a section as in \eqref{section_mn} to point towards $q(s)$ at $2p$ is that
\[\det \begin{pmatrix} 2t + \cdots & m_0 + m_1t  + \cdots \\ -s + \cdots & n_0 + n_1 t + \cdots \end{pmatrix} = -sm_0 +(2n_0 + sm_1)t + \cdots\]
vanish to second order in $t$.  When $s\neq 0$, this cuts out the $2$-dimensional subspace $m_0= 2n_0 + sm_1 = 0$ in the four dimensional vector space with coordinates $m_0, m_1, n_0, n_1$.

\emph{In characteristic distinct from~$2$}, the limit as $s \to 0$ of this subspace
is simply $m_0 = n_0 = 0$, i.e.\ the subspace $N_D(-p)|_{2p} \subset N_D|_{2p}$
as claimed.
\end{proof}

\begin{cor} \label{cor-dplus}
Suppose that $N_D$ is (semi)stable for $D$ a general Brill--Noether curve
of degree $d$ and genus $g$ in $\pp^3$.
Then $N_C$ is (semi)stable for $C$ a general Brill--Noether curve of degree $d + \epsilon$
and genus $g$ in $\pp^3$, where
\[\epsilon = \begin{cases}
1 & \text{if $\chara(k) \neq 2$;} \\
2 & \text{if $\chara(k) = 2$.}
\end{cases}\]
\end{cor}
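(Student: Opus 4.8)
The plan is to prove both cases by specializing a general Brill--Noether curve $C$ of degree $d+\epsilon$ and genus $g$ to the union of $D$ with $\epsilon$ general $1$-secant lines, and then peeling these lines off one at a time via Lemma~\ref{lem:1_sec_balanced}. By Lemma~\ref{lem:rightcomp}\eqref{lem:rightcomp_1sec}, attaching a quasi-transverse $1$-secant line to a Brill--Noether curve of degree $d'$ and genus $g$ yields a Brill--Noether curve of degree $d'+1$ and the same genus $g$; iterating, $D \cup L_1 \cup \cdots \cup L_\epsilon$ lies in the Brill--Noether component in degree $d+\epsilon$ and genus $g$. By Proposition~\ref{prop:stab-open}\eqref{stab-open}, it therefore suffices to prove that the normal bundle of this reducible curve is (semi)stable.

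When $\chara(k) \neq 2$ I would take $\epsilon = 1$, attaching a single general $1$-secant line $L$ at a point $p \in D$ and choosing $q \in L \smallsetminus p$. Applying Lemma~\ref{lem:1_sec_balanced} with $N_D' = N_D$, the (semi)stability of $N_{D \cup L}$ reduces to that of the modification $N_D[2p \to q]$. Since $N_D$ is (semi)stable by hypothesis, and since for a general curve $D$ and general point $p$ the point $p$ has ordinary ramification (in characteristic $\neq 2$) while $q$ may be taken general in $\pp^3$, Lemma~\ref{lem:1_sec} applies directly to conclude that $N_D[2p \to q]$ is (semi)stable. This settles the case $\chara(k) \neq 2$.

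When $\chara(k) = 2$ I would take $\epsilon = 2$, attaching general $1$-secant lines $L_1, L_2$ at points $p_1, p_2 \in D$ with chosen $q_i \in L_i \smallsetminus p_i$. Because the proof of Lemma~\ref{lem:1_sec_balanced} is local at the node of attachment, it may be applied twice --- peeling off $L_2$ and then $L_1$, feeding the modified bundle $N'$ produced at each stage into the next application --- so that the (semi)stability of $N_{D \cup L_1 \cup L_2}$ reduces to that of the double modification $N_D[2p_1 \to q_1][2p_2 \to q_2]$. The essential difficulty is that Lemma~\ref{lem:1_sec} genuinely fails here: by the parity obstruction of Lemma~\ref{lem:char2_obs} (applied to rational $D$), the single modification $N_D[2p \to q]$ is \emph{not} semistable in characteristic $2$, which is precisely why the degree must be raised by $2$ rather than by $1$.

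The main obstacle is thus to prove directly that $N_D[2p_1 \to q_1][2p_2 \to q_2]$ is (semi)stable in characteristic $2$. Following the philosophy announced for \S\ref{sec-gluing}, I would first analyze the single modification $M_1 \colonequals N_D[2p_1 \to q_1]$, which is unstable but of rank $2$ and hence has a \emph{unique} maximal destabilizing line subbundle $G$; the content of the parity computation of Lemma~\ref{lem:char2_obs}, together with the local calculation in the proof of Lemma~\ref{lem:1_sec}, is that $G$ exceeds the slope $\mu(M_1)$ by exactly the minimal forced amount, so that $\deg G = \mu(M_1) + 1$. Since $q_2$ is general, at $p_2$ the fiber of $G$ is transverse to the pointing direction $N_{D \to q_2}$, so the second modification $[2p_2 \to q_2]$ replaces $G$ by $G(-2p_2)$, whose slope drops to exactly $\mu\bigl(N_D[2p_1 \to q_1][2p_2 \to q_2]\bigr)$ --- no longer destabilizing. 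It then remains to rule out any other destabilizing subbundle; the pointing subbundles $N_{D \to q_i}$ are harmless after the modifications, and a general line subbundle is controlled by the (semi)stability of $N_D$. I expect the hardest step to be establishing the precise degree of $G$, i.e.\ that the characteristic-$2$ instability of a single modification is as mild as possible, since this is exactly the point at which the two modifications conspire to restore the balanced regime.
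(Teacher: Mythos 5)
Your setup and your characteristic-$\neq 2$ argument coincide with the paper's: specialize to $D$ union $\epsilon$ general $1$-secant lines, apply Lemma~\ref{lem:1_sec_balanced} (iteratively, using the flexibility of $N'$ built into its statement) to reduce to the (semi)stability of $N_D[2p \to q]$, resp.\ $N_D[2p_1 \to q_1][2p_2 \to q_2]$, and invoke Lemma~\ref{lem:1_sec} when $\chara(k) \neq 2$. The gap is in your characteristic-$2$ half. Your plan rests on the claim that $M_1 = N_D[2p_1 \to q_1]$ is unstable with maximal destabilizing line subbundle $G$ of degree \emph{exactly} $\mu(M_1)+1$, justified by ``the parity computation of Lemma~\ref{lem:char2_obs}.'' But Lemma~\ref{lem:char2_obs} applies only to rational curves, whereas the corollary must be proved for every genus $g$; for $g \geq 1$ no parity obstruction exists in the paper (indeed Theorem~\ref{thm:main} asserts stability in characteristic $2$ for $g \geq 2$), so neither the instability of $M_1$ nor the precise degree of $G$ is established, and the paper proves no such statement anywhere. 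Even granting that structure, your concluding step --- that after the second modification ``a general line subbundle is controlled by the (semi)stability of $N_D$'' --- does not close the argument: an arbitrary line subbundle $F$ of the double modification satisfies only $\deg F \leq \mu(N_D) = \mu(N_D[2p_1 \to q_1][2p_2 \to q_2]) + 2$, so one must rule out \emph{all} subbundles of $N_D$ of the two intermediate degrees aligning with the modification data at $2p_1$ and $2p_2$, not just the single bundle $G$; this is precisely the delicate interpolation-type analysis you flag as the hardest step and do not carry out.

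The paper avoids this entirely with one specialization you did not consider: limit $p_1$ and $p_2$ together to a common point $p$. Since $q_1$ and $q_2$ are general, the two pointing subbundles $N_{D \to q_1}$ and $N_{D \to q_2}$ are transverse along $2p$, so (by the discussion in Remark~3.4 of \cite{aly}) the bundles $N_D[2p_1 \to q_1][2p_2 \to q_2]$ fit into a flat family whose central fiber is the full twist $N_D(-2p)$ --- which is (semi)stable because $N_D$ is, by hypothesis. Openness of (semi)stability, Proposition~\ref{prop:stab-open}(1), then gives (semi)stability for general $p_1, p_2$, with no need to know anything about the single modification $N_D[2p \to q]$ in characteristic $2$. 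This is the missing idea; without it (or a genuine proof of your claims about $G$), your characteristic-$2$ case is incomplete.
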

\begin{proof}
We specialize $C$ to the union of a
general Brill--Noether curve $D$ with $\epsilon$ one-secant lines.
Applying Lemma~\ref{lem:1_sec_balanced}, it suffices to show that
$N_D[2p \to q]$ (respectively $N_D[2p_1 \to q_1][2p_2 \to q_2]$) is (semi)stable,
where the $p_i$ denote general points on $D$, and the $q_i$ denote general points in $\pp^3$.

As we limit $p_1$ and $p_2$ together to a common point $p$, the
vector bundles $N_D[2p_1 \to q_1][2p_2 \to q_2]$ fit together
to form a vector bundle with central fiber $N_D(-2p)$
(c.f.\ the discussion in Section \ref{sec:mod_in_fam}) --- which is (semi)stable by assumption.

In characteristic distinct from $2$, we apply
Lemma~\ref{lem:1_sec} to conclude that $N_D[2p \to q]$
is (semi)stable as desired.
\end{proof}

\section{Reduction to a finite list of $(d,g)$}\label{sec:reduction_finite}

In this section we combine the results
of the previous section to reduce the proof of Theorem~\ref{thm:main} to a finite list of base cases.

\begin{prop}\label{prop:reduction_finite}
Suppose that Theorem~\ref{thm:main} holds for
curves of degree $d$ and genus $g$ with
\begin{multline}\label{finite_list}
(d,g) \in \{(3, 0), (4, 1), (5, 1), (6,2), (7,2), (6,3), (7,3), \\
(7,4), (8,4), (7,5), (8,5), (8,6), (9,6), (9,7), (10,7), (9,8), (10,8) \}. 
\end{multline}
Then Theorem~\ref{thm:main} holds in all cases.
If the characteristic of the ground field is not $2$, then it suffices to replace list \eqref{finite_list} with
\begin{equation}\label{finite_list_not2}
(d,g) \in \{(3, 0), (4, 1), (6,2), (6,3),  (7,4),  (7,5), (8,6),  (9,7),  (9,8) \}. 
\end{equation}
\end{prop}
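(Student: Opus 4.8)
The plan is to use the three basic degenerations established in the earlier sections---namely Corollary~\ref{cor-dplus} (pulling off one-secant lines, with $\epsilon = 1$ in characteristic $\neq 2$ and $\epsilon = 2$ in characteristic $2$) together with Lemma~\ref{lem:pancake} (reducing to $g \leq 8$)---to argue that stability, semistability, and instability for a general Brill--Noether space curve of any $(d,g)$ can be deduced from the finitely many base cases listed. By Lemma~\ref{lem:pancake}, it suffices to treat $g \leq 8$, so the whole problem is reduced to a bounded region of the $(d,g)$-plane. Within this region, for each fixed genus $g$, the relevant degrees $d$ range over an interval: the minimum degree is governed by the Brill--Noether inequality $\rho(g,3,d) \geq 0$, and there is no upper bound a priori, so the key mechanism is to climb in degree from the base cases using Corollary~\ref{cor-dplus}.

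First I would organize the argument genus by genus. For each $g$ with $2 \leq g \leq 8$, I would identify the smallest admissible degree $d_{\min}(g)$ with $\rho(g,3,d) \geq 0$, and observe that the base-case list \eqref{finite_list} contains, for each such $g$, two consecutive degrees (e.g.\ $(6,2),(7,2)$ for $g=2$, or $(9,8),(10,8)$ for $g=8$). Having two consecutive degrees is exactly what Corollary~\ref{cor-dplus} needs: in characteristic $2$, the corollary increases the degree by $2$, so starting from two consecutive base degrees $d_0$ and $d_0 + 1$ lets us reach every degree $d \geq d_0$ by repeatedly adding $2$ to one of the two starting values. In characteristic $\neq 2$ the corollary increases degree by $1$, so a single base degree per genus suffices---this is why the shorter list \eqref{finite_list_not2} works when $\chara(k) \neq 2$. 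The cases $g = 0, 1$ are handled separately via $(3,0)$ and $(4,1)$ together with the explicit splitting results (Lemma~\ref{lem:01}, Lemma~\ref{lem:char2_obs}, and Corollary~\ref{cor:char2_stab}) already in hand.

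The main obstacle is bookkeeping: I must verify that the two base degrees supplied for each genus in \eqref{finite_list} are genuinely consecutive and that they sit at or above $d_{\min}(g)$, so that the inductive degree-climbing via Corollary~\ref{cor-dplus} covers \emph{every} admissible $(d,g)$ with that genus and no admissible pair is skipped. I would present this as a finite check, tabulating for each $g \in \{2,\dots,8\}$ the value $d_{\min}(g)$ and confirming that the list contains $d_{\min}(g)$ and $d_{\min}(g)+1$ (and that the exceptional unstable pairs $(5,2)$ and $(6,4)$ are consistent with the pattern). I would also need to confirm that the instability conclusion for $(5,2)$ and $(6,4)$---which are \emph{below} or at the start of their genus ranges---propagates correctly, though in fact these are precisely the isolated unstable cases and do not feed into the stable induction; the degree-climbing from the genuinely stable base cases of each genus is what establishes stability for all larger degrees. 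Assembling these per-genus verifications, together with the separate low-genus analysis, completes the reduction.
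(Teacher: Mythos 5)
Your proposal is correct and follows essentially the same route as the paper: reduce to $g \leq 8$ via Lemma~\ref{lem:pancake}, then climb in degree using Corollary~\ref{cor-dplus} (step $1$ in characteristic $\neq 2$, requiring one base degree per genus; step $2$ in characteristic $2$, requiring the two smallest degrees per genus), with the unstable pairs $(5,2)$, $(6,4)$ and the characteristic-$2$ even-degree rational curves handled directly rather than through the induction. The paper's proof is terser but identical in substance, including your observation that $(4,0)$ can be omitted from the list because its instability in characteristic $2$ was already established in \S\ref{sec-unstable}.
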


\begin{proof}
We will prove this by induction on $d$ and $g$.
By Lemma \ref{lem:pancake}, it suffices to prove this when $g \leq 8$.
If the characteristic is not equal to $2$,
then by Corollary~5.3, it suffices to check (semi)stability for the smallest
degree in each genus
for which Theorem~\ref{thm:main} asserts that the normal bundle
is (semi)stable.
Similarly, if the characteristic is $2$,
it suffices to check (semi)stability for the two smallest degrees.

Note that, for rational curves of even degrees in characteristic $2$,
we have already established
that the normal bundles are unstable.
Thus we do not need to include $(4, 0)$ in our list \eqref{finite_list}.
%
%
\end{proof}

\begin{rem}
By Lemma~\ref{lem:01}, we already know semistability for
$(d, g) = (3, 0)$ and $(4, 1)$.
This establishes Theorem~\ref{thm:main} for curves of genus $0$
in any characteristic, and for curves of genus $1$ in characteristic
distinct from $2$.
\end{rem}

\begin{rem}
The reason that the cases $(6, 2)$ and $(7, 4)$
appeared in our list \eqref{finite_list} of remaining cases
is that the cases $(5, 2)$ and $(6, 4)$
were exceptions to Theorem~\ref{thm:main},
and so our induction on the degree broke down.
In fact, one cannot degenerate such curves to the union
of a Brill--Noether curve $D$ of degree $d - 1$ and genus $g$
with a $1$-secant line and apply Lemma~\ref{lem:1_sec_balanced}
(even without applying Lemma~\ref{lem:1_sec});
in both cases, $N_D[2p \to q]$
is unstable (if $Q$ denotes the unique quadric containing $D$
then $N_{D/Q}(-2p) \subset N_D[2p \to q]$ is destabilizing).
\end{rem}

\section{Base Cases: Applications of Gluing Data}

In this section, we establish those base cases appearing 
in Proposition~\ref{prop:reduction_finite} which can be studied
using the techniques of Section~\ref{sec-gluing}.

\subsection*{\boldmath The case $(d, g) = (5, 1)$}
We degenerate to the union of an elliptic normal curve $C$
with a $1$-secant line.
By Lemma~\ref{lem:1_sec_balanced},
it suffices to show $N_C[2u \to v]$ is semistable, where $u \in C$ and $v \in \pp^3$ are general.
Fix a quadric $Q$ containing $C$,
and specialize $v$ to a general point on $C$.
By Lemma~\ref{lem:3pts}, there are exactly two points on $C$
at which the fibers of $N_{C \to v}$ and $N_{C/Q}$ meet transversely;
specialize $u$ to one of them. Then applying \eqref{modifications_ses} with \(k=1\) (by virtue of Lemma~\ref{lem:3pts}) to the normal bundle exact sequence \eqref{eq:normal_bundle_exact_seq} for \(C \subset Q\), we see that $N_C[2u \to v]$ fits in an exact sequence
\[0 \to [N_{C/Q}(-u) \simeq \O_C(2)(-u)] \to N_C[2u \to v] \to [N_Q|_C(-u) \simeq \O_C(2)(-u)] \to 0,\]
so is semistable as desired.

\subsection*{\boldmath The cases $(d, g) = (9, 7), (10, 7), (9, 8)$, and $(10, 8)$}

When $(d,g) =(10,7)$, or $(10,8)$, respectively, we first degenerate the curve to the union of a general Brill--Noether curve $C$ of degree $9$ and genus $7$ or $8$, respectively, and general $1$-secant line $M$, meeting $C$ at $u$.
Choose a point $v \in M \smallsetminus u$ so $M = \overline{uv}$.
By Lemma \ref{lem:1_sec_balanced}, it suffices to show that $N_C(u)[2u \to v]$ is stable.

Therefore, in order to deal with all of our cases $(d, g) \in \{(9, 7), (10,7), (9, 8), (10,8)\}$, we  begin with a curve $C$ of degree $9$ and genus $7$ or $8$.  We will degenerate $C$
to the union of a general canonical curve $D$ (of degree $6$ and genus $4$)
and a union $R$ of rational curves meeting $D$ quasi-transversely at a set $\Gamma$ of $6$ points
(three general $2$-secant lines when $g=7$, and the union of 
a general $2$-secant line with a general $4$-secant conic when $g=8$, respectively).  
\begin{center}
\begin{tikzpicture}
\draw (1, 3.5) .. controls (0.5, 3.5) and (-1, 3) .. (0, 2);
\draw (0, 2) .. controls (0.5, 1.5) and (1, 1) .. (2, 1);
\draw (0.1, 2.1) .. controls (0.5, 2.5) and (1, 3) .. (2, 3);
\draw (1, 0.5) .. controls (0.5, 0.5) and (-1, 1) .. (-0.1, 1.9);
\draw (2, 1) .. controls (4, 1) and (4, 3) .. (2, 3);
\draw (1,3.1)--(1.5,.75);
\draw (2,3.2)--(2,.75);
\draw (3,3.1)--(2.5,.75);
\draw (1, 0.25) node{$D$};
\draw [decorate, decoration={brace, mirror, amplitude=0.75ex}] (1.3, 0.7) -- (2.7, 0.7);
\draw (2, 0.35) node{$R$};
\draw (1.5, -0.7) node{$(d, g) = (9, 7)$};
\end{tikzpicture}
\begin{tikzpicture}
\draw (1, 3.5) .. controls (0.5, 3.5) and (-1, 3) .. (0, 2);
\draw (0, 2) .. controls (0.5, 1.5) and (1, 1) .. (2, 1);
\draw (0.1, 2.1) .. controls (0.5, 2.5) and (1, 3) .. (2, 3);
\draw (1, 0.5) .. controls (0.5, 0.5) and (-1, 1) .. (-0.1, 1.9);
\draw (2, 1) .. controls (4, 1) and (4, 3) .. (2, 3);
\draw (2.25, 2) ellipse (0.5 and 1.25);
\draw (0,2.75)--(1.7,.75);
\draw (1, 0.25) node{$D$};
\draw [decorate, decoration={brace, mirror, amplitude=0.75ex}] (1.3, 0.7) -- (2.7, 0.7);
\draw (2, 0.35) node{$R$};
\draw (1.5, -0.7) node{$(d, g) = (9, 8)$};
\end{tikzpicture}
\end{center}

Write $Q$ for the unique quadric containing $D$.
In both cases, the tangent lines to $R$ at $\Gamma$ are transverse to $Q$,
and so applying \eqref{modifications_ses} with \(k=0\) to the normal bundle exact sequence \eqref{eq:normal_bundle_exact_seq} for \(D \subset Q\), we see that the restricted normal bundle $N_{D \cup R}|_D$ fits into a balanced exact sequence:
\begin{equation} \label{dseq}
0 \to [N_{D/Q} \simeq \O_D(3)] \to N_{D \cup R}|_D \to [N_Q|_D(\Gamma) \simeq \O_D(2)(\Gamma)] \to 0.
\end{equation}
In particular, $N_{D \cup R}|_D$ is strictly semistable,
and $N_{D/Q}$ gives a destabilizing line bundle.

Similarly, after specializing $v$ to a point on $D$, Lemma \ref{lem:3pts} asserts that there are $4$ points $u$ on $D$ where the fibers $N_{D \to v}|_u$ and $N_{D/Q}|_u$ coincide to first order.  Specializing $u$ to one of these points, and  applying \eqref{modifications_ses} with \(k=1\), we again have a balanced exact sequence
\begin{equation} \label{dseq_2}
0 \to N_{D/Q} \to N_{D \cup R}|_D(u)[2u \to v] \to N_Q|_D(\Gamma) \to 0.
\end{equation}
In particular, $N_{D \cup R}|_D(u)[2u \to v]$ is strictly semistable,
and $N_{D/Q}$ gives a destabilizing line bundle.

Let $L$ be a line component of $R$,
meeting $D$ at $p_1$ and $p_2$ with 
 $p_i' \in T_{p_i}D \smallsetminus p_i$, and denote by
 $\Lambda_i$ the plane spanned by $p_i'$ and $L$.  Then
 \[N_{D \cup R}|_L \simeq N_{L / \Lambda_1}(p_1) \oplus N_{L / \Lambda_2}(p_2) \simeq \O_{\pp^1}(2) \oplus \O_{\pp^1}(2). \] 
Combining this with Lemma~\ref{lem:4_sec_conic},
the restriction of $N_{D \cup R}$ (resp.~$N_{D \cup R}(u)[2u \to v]$) to
each of the components of $R$ is also strictly semistable.

In particular, writing $\nu \colon D \sqcup R \to D \cup R$
for the normalization, any destabilizing subbundle $F \subset \nu^* N_{D \cup R}$ (resp.~$F \subset \nu^* N_{D \cup R}(u)[2u \to v]$)
must be destabilizing
on every component and agree at the points lying over the nodes $D \cap R$.
The key observation is that, because $N_{D/Q}$ is a subbundle of $N_D$ as well,
its fiber 
at each of the points of $\Gamma$ is exactly the subspace that does not smooth that node.
On the other hand, if $L$ denotes a component of $R$ which is a line,
then any destabilizing $\O(2)$
has a fiber at one or more of the nodes that fails to smooth it
(otherwise it would be a subbundle of $N_L \simeq \O(1) \oplus \O(1)$).
It thus remains to show that $N_{D/Q}$ is the \emph{unique} destabilizing
subbundle of $N_{D \cup R}|_D$ (resp.~$N_{D \cup R}|_D(u)[2u \to v]$), or equivalently:

\begin{lem}
The sequences  \eqref{dseq} and \eqref{dseq_2} are nonsplit, i.e.
\[H^0(N_{D \cup R}|_D(-2)(-\Gamma)) = 0 \qquad\text{and} \qquad H^0(N_{D \cup R}|_D(-2)(-\Gamma)(u)[2u \to v]) = 0.\]
\end{lem}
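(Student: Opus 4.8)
The plan is to twist both sequences down to slope $0$ and reduce each vanishing to the (non)splitting of the corresponding extension, which I would then resolve using that $D$, being a genus-$4$ canonical curve, is a complete intersection. Since $\O_D(1) \simeq K_D$, twisting \eqref{dseq} by $\O_D(-2)(-\Gamma)$ produces
\[0 \to K_D(-\Gamma) \to N_{D \cup R}|_D(-2)(-\Gamma) \to \O_D \to 0,\]
and twisting \eqref{dseq_2} gives the analogous sequence with middle term $N_{D\cup R}|_D(u)[2u \to v](-2)(-\Gamma)$. The first step is to observe that $H^0(K_D(-\Gamma)) = 0$: a section of $K_D \simeq \O_D(1)$ vanishing on $\Gamma$ is a plane through the six points of $\Gamma$, and $\Gamma$ is not contained in a plane (for the conic, four points lie on $H$, but the remaining points of $\Gamma$ do not). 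Hence $h^0$ of either middle term is at most $1$, and equals $1$ precisely when the extension splits; so it remains to prove each sequence is nonsplit.

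For the nonsplitting I would pass to the complete-intersection description of $D$. As $D$ is the intersection of the quadric $Q$ with a cubic, the analogue of \eqref{inQ} for $N_D$ itself is split, so $N_D \simeq \O_D(3) \oplus \O_D(2)$. By Lemma~\ref{lem:hh} the restricted bundle is a sequence of elementary modifications of $N_D(\Gamma)$ towards the pointing directions $q_i \in T_{p_i}R$; untwisting, this identifies
\[N_{D\cup R}|_D(-2)(-\Gamma) \simeq (K_D \oplus \O_D)[p_1 \to q_1]\cdots[p_6 \to q_6],\]
in which the $K_D$ summand is $N_{D/Q}(-2)$, the $\O_D$ summand is $N_Q|_D(-2)$, and each $q_i$ is transverse to $Q$. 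A global section is then a pair $(s,t)$ with $s \in H^0(\O_D(1))$ a linear form and $t \in H^0(\O_D)$ a constant, subject to the six conditions that $(s(p_i),t)$ lie on the line $\langle q_i\rangle$ in the fiber at $p_i$. This exhibits $H^0$ as the kernel of the evaluation map from the $5$-dimensional space $H^0(K_D \oplus \O_D)$ to the $6$-dimensional space $\bigoplus_i \big((K_D \oplus \O_D)|_{p_i}/\langle q_i\rangle\big)$.

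To show this kernel vanishes I would argue in two cases. If $t = 0$, then transversality of each $q_i$ to $Q$ (so that the line $\langle q_i\rangle$ has nonzero $\O_D$-component) forces $s(p_i) = 0$ for all $i$, whence $s$ is a plane through $\Gamma$ and $s = 0$ by the first step. If $t \neq 0$, normalize $t = 1$; the conditions then prescribe the value of $s$ at each $p_i$ to be a fixed $c_i$ determined by $q_i$. Since $\Gamma$ spans $\pp^3$, evaluation of linear forms at $\Gamma$ is injective with $4$-dimensional image, so such an $s$ exists only if $(c_1,\dots,c_6)$ lies in that image. The sequence \eqref{dseq_2} is handled identically: the extra modification $[2u \to v]$ at $u$ is aligned with the $N_{D/Q} \simeq K_D$ direction (by the choice of $u$ via Lemma~\ref{lem:3pts}), so it imposes only one further condition compatible with $N_{D/Q}$ and merely tightens the constraints.

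The main obstacle is this final incidence claim, namely that $(c_1,\dots,c_6)$ does \emph{not} lie in the image of evaluation, i.e.\ that the six pointing conditions coming from the transverse tangent directions of $R$ are independent enough to cut $H^0(K_D \oplus \O_D)$ down to zero. This is the genuine gluing-data input and cannot be read off from the numerics alone. I expect to establish it by exhibiting a single explicit configuration of $D \cup R$---respecting that the conic's four points are coplanar and that the $q_i$ are transverse to $Q$---for which the matrix of conditions has full rank $5$; upper semicontinuity of $h^0$ then yields the vanishing for the general $D \cup R$ relevant to the induction.
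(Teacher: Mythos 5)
Your preliminary reductions are all sound: twisting \eqref{dseq} down to $0 \to K_D(-\Gamma) \to N_{D\cup R}|_D(-2)(-\Gamma) \to \O_D \to 0$, observing $H^0(K_D(-\Gamma)) = 0$ because $\Gamma$ is not coplanar, using that $D$ is a $(2,3)$ complete intersection so $N_D \simeq \O_D(3)\oplus\O_D(2)$ splits, and recasting $H^0$ of the modified bundle as the kernel of an evaluation map from the $5$-dimensional space $H^0(K_D\oplus\O_D)$ into a $6$-dimensional target, with the $t=0$ and $t\neq 0$ case analysis handled correctly. But there is a genuine gap exactly where you flag it: the claim that $(c_1,\dots,c_6)$ avoids the $4$-dimensional image of evaluation \emph{is} the entire content of the lemma, and ``I expect to establish it by exhibiting a single explicit configuration'' is a promissory note, not a proof. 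No configuration is exhibited, and producing one is not routine: the $q_i$ are far from free (the two pointing directions attached to each $2$-secant line must both lie along that line, and the conic ties together its four directions), so the vector $(c_i)$ is heavily constrained and could a priori be forced into the image; moreover the paper works over an algebraically closed field of \emph{arbitrary} characteristic, so a single explicit example would have to be verified to have full rank in every characteristic. Your treatment of \eqref{dseq_2} is also too quick: twisting up by $u$ enlarges the section space to $H^0(K_D(u))\oplus H^0(\O_D(u))$ and $[2u\to v]$ imposes two further conditions, so ``merely tightens the constraints'' does not follow and the matrix analysis would need to be redone from scratch for that case.

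The paper closes this gap by a degeneration rather than an explicit example: it collides two points of $\Gamma$ to a common point $p$ of $D$. By the discussion in Remark~3.4 of \cite{aly}, the bundles $N_{D\cup R}|_D$ (resp.\ $N_{D\cup R}|_D(u)[2u\to v]$) fit into a family whose central fiber is a bundle $N$ (resp.\ $N(u)[2u\to v]$) sitting in $0 \to N_{D/Q}(p) \to \cdot \to N_Q|_D(\Gamma-p) \to 0$; twisting by $\O_D(-2)(-\Gamma)$ turns the sub into $\O_D(1)(-(\Gamma-p))$ and the quotient into $\O_D(-p)$, and both have vanishing $H^0$ because the five points of $\Gamma-p$ are in linear general position --- so the limit $h^0$ is zero outright, with no rank computation and no characteristic hypotheses, and semicontinuity (the very mechanism you invoke at the end) concludes for the general configuration, including the $[2u\to v]$ variant. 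Your argument could be repaired by substituting this collision degeneration for the hoped-for explicit configuration; as written, however, the decisive step is missing.
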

\begin{proof}
To show the desired vanishing,
we degenerate
two points of $\Gamma$ together to a common point $p$ on $D$:
\begin{center}
\begin{tikzpicture}
\draw (1, 3.5) .. controls (0.5, 3.5) and (-1, 3) .. (0, 2);
\draw (0, 2) .. controls (0.5, 1.5) and (1, 1) .. (2, 1);
\draw (0.1, 2.1) .. controls (0.5, 2.5) and (1, 3) .. (2, 3);
\draw (1, 0.5) .. controls (0.5, 0.5) and (-1, 1) .. (-0.1, 1.9);
\draw (2, 1) .. controls (4, 1) and (4, 3) .. (2, 3);
\draw[densely dotted] (1,3.25)--(1.5,.75);
\draw (0.87,3.25)--(2.125,.75);
\draw (2,3.25)--(2,.75);
\draw (3,3.25)--(2.5,.75);
\filldraw (2,.995) circle[radius=0.03];
\draw (1.8,.98) node[below]{$p$};
\end{tikzpicture}
\begin{tikzpicture}
\draw (1, 3.5) .. controls (0.5, 3.5) and (-1, 3) .. (0, 2);
\draw (0, 2) .. controls (0.5, 1.5) and (1, 1) .. (2, 1);
\draw (0.1, 2.1) .. controls (0.5, 2.5) and (1, 3) .. (2, 3);
\draw (1, 0.5) .. controls (0.5, 0.5) and (-1, 1) .. (-0.1, 1.9);
\draw (2, 1) .. controls (4, 1) and (4, 3) .. (2, 3);
\draw (2.25, 2) ellipse (0.5 and 1.25);
\draw[densely dotted] (0,2.75)--(1.7,.75);
\draw (0,2.63)--(2.19,.8);
\filldraw (1.95,.995) circle[radius=0.03];
\draw (1.85,.98) node[below]{$p$};
\end{tikzpicture}
\end{center}
Let $N$ denote the bundle obtained by gluing $N_{D \cup R}|_{D \smallsetminus p}$
to $N_D(p)|_{D \smallsetminus (\Gamma \smallsetminus p)}$ along the natural isomorphism
$N_{D \cup R}|_{D \smallsetminus \Gamma} \simeq N_D(p)|_{D \smallsetminus \Gamma}$.
By Section \ref{sec:mod_in_fam}, the bundles $N_{D \cup R}|_D$ (resp.~$N_{D \cup R}|_D(u)[2u \to v]$) fit together to form a bundle whose
central fiber is the bundle $N$ (resp.~$N(u)[2u \to v]$). 
It thus remains to show
\[H^0(N(-2)(-\Gamma)) = 0 \qquad\text{and} \qquad H^0(N(u)[2u \to v](-2)(-\Gamma)) = 0.\]
To do this, we use the exact sequences coming from applying \eqref{modifications_ses} (with \(k=1\) for the modification at \(u\) in the second case) to the normal bundle sequence for \(D \subset Q\):
\begin{gather*}
0 \to [N_{D/Q}(p) \simeq \O_D(3)(p)] \to N \to [N_Q|_D(\Gamma - p) \simeq \O_D(2)(\Gamma - p)] \to 0 \\
0 \to [N_{D/Q}(p) \simeq \O_D(3)(p)] \to N(u)[2u \to v] \to [N_Q|_D(\Gamma - p) \simeq \O_D(2)(\Gamma - p)] \to 0;
\end{gather*}
twisting these sequences by $\O_D(-2)(-\Gamma)$ and taking global sections, it remains to check that
\[H^0(\O_D(1)(-(\Gamma - p))) = H^0(\O_D(-p)) = 0.\]
This is clear since the five points of $\Gamma - p = \Gamma^{\text{red}}$
are in linear general position.
\end{proof}

\section{Stability and degeneration III: Limits of Gluing Data}\label{sec:limit_gluing}

As in the previous section, 
we again want to degenerate to reducible curves $X \cup Y$ where neither $N_{X \cup Y}|_X$ nor $N_{X\cup Y}|_Y$ are necessarily stable, but the destabilizing subbundles on each component do not agree at $X\cap Y$.
The fundamental difficulty we address in this section is that it is often difficult to compute the destabilizing subbundles on each component without further degeneration.
We therefore study the agreement conditions at $X \cap Y$ as the points of $X \cap Y$ come together.

Let $D$ be a Brill--Noether curve.  Fix distinct points $q, p_{11}, \ldots, p_{1r_1}, p_{21}, \ldots, p_{2r_2} \in D$.
Let $R_i$ be a rational curve meeting $D$ quasi-transversely exactly at $q, p_{i1}, \ldots, p_{ir_i}$,
such that the tangent directions at $q$ to $D$, $R_1$, and $R_2$ span $\pp^3$.

\begin{center}
\begin{tikzpicture}
\draw (1, 3.5) .. controls (0.5, 3.5) and (-1, 3) .. (0, 2);
\draw (0, 2) .. controls (0.5, 1.5) and (1, 1) .. (2, 1);
\draw (0.1, 2.1) .. controls (0.5, 2.5) and (1, 3) .. (2, 3);
\draw (1, 0.5) .. controls (0.5, 0.5) and (-1, 1) .. (-0.1, 1.9);
\draw (2, 1) .. controls (4, 1) and (4, 3) .. (2, 3);
\draw (2, 3) .. controls (1.5, 2) and (2.2, 0) .. (2.2, 1.2);
\draw (2, 3) .. controls (3, 2.5) and (4, 1) .. (3.2, 1.5);
\draw (2.05, 2) node{$R_1$};
\draw (2.95, 2) node{$R_2$};
\draw (1.2, 3.5) node{$D$};
\draw (2, 3.2) node{$q$};
\filldraw (2, 3) circle [radius=0.03];
\filldraw (2, 1) circle [radius=0.03];
\filldraw (2.19, 1.005) circle [radius=0.03];
\filldraw (3.28, 1.45) circle [radius=0.03];
\filldraw (3.42, 1.66) circle [radius=0.03];
\draw [decorate, decoration={brace, mirror, amplitude=0.8ex}] (1.9, 0.8) -- (2.3, 0.8);
\draw (2.1, 0.5) node[below]{$p_{1j}$};
\draw [decorate, decoration={brace, mirror, amplitude=0.8ex}] (3.3, 1.2) -- (3.65, 1.55);
\draw (3.8, 1.2) node[below]{$p_{2j}$};
\end{tikzpicture}
\end{center}

Assume that both $R_i$ satisfy Assumption \ref{assump:balanced_slope}.  
Using this assumption we may apply Lemma \ref{lem:deform_delta} to show that there exists an \'etale neighborhood $\Delta = q_i(t)$ of $q \in D$,
which we normalize so $q_1(t)$ and $q_2(t)$ have distinct derivatives at $t = 0$,
and deformations $R_i(t)$ of $R_i$, and $p_{ij}(t)$ of $p_{ij}$,
such that for $t \in \Delta$, the rational curve $R_i(t)$ meets $D$ quasi-transversely in $q_i(t), p_{i1}(t), \dots p_{ir_i}(t)$.

\begin{center}
\begin{tikzpicture}[scale=0.6]
\draw (1, 3.5) .. controls (0.5, 3.5) and (-1, 3) .. (0, 2);
\draw (0, 2) .. controls (0.5, 1.5) and (1, 1) .. (2, 1);
\draw (0.1, 2.1) .. controls (0.5, 2.5) and (1, 3) .. (2, 3);
\draw (1, 0.5) .. controls (0.5, 0.5) and (-1, 1) .. (-0.1, 1.9);
\draw (2, 1) .. controls (4, 1) and (4, 3) .. (2, 3);
\draw (2, 3) .. controls (1.5, 2) and (2.2, 0) .. (2.2, 1.2);
\draw (2, 3) -- (2.03, 3.06);
\draw (2, 3) -- (1.94, 3.03);
\draw (2, 3) .. controls (3, 2.5) and (4, 1) .. (3.2, 1.5);
\draw[densely dotted] (1.9, 3.1) .. controls (1.4, 2) and (2.2, -0.3) .. (2.3, 1.2);
\draw[densely dotted] (2.1, 3.1) .. controls (3, 2.7) and (4.3, 0.9) .. (3.1, 1.45);
\end{tikzpicture}
\end{center}
Suppose that, for $t \in \Delta^* \colonequals \Delta\smallsetminus 0$,
the normal bundle $N_{D \cup R_1(t) \cup R_2(t)}$ is not stable.
These bundles fit together to form a vector bundle $\hat{\N}$ over $\Delta^*$.
However, since $D \cup R_1 \cup R_2$ is not lci,
its normal sheaf is not a vector bundle; there is therefore no obvious way to extend $\hat{\N}$
over $\Delta$.
Thus, extracting information at the central fiber is subtle.

By the discussion in Section \ref{sec:mod_in_fam},
we may nevertheless extend the \emph{restriction} $\hat{\N}|_D$
to a bundle $\N$ on $D \times \Delta$
whose fiber $N \colonequals \N|_0$ over $0 \in \Delta$ is obtained from gluing
$N_{D \cup R_1 \cup R_2}|_{D \smallsetminus q}$
to $N_D(q)|_{D \smallsetminus \{p_{11}, \ldots, p_{1r_1}, p_{21}, \ldots, p_{2r_2}\}}$
along the natural isomorphism
\[N_{D \cup R_1 \cup R_2}|_{D \smallsetminus \{q, p_{11}, \ldots, p_{1r_1}, p_{21}, \ldots, p_{2r_2}\}} \simeq N_D|_{D \smallsetminus \{q, p_{11}, \ldots, p_{1r_1}, p_{21}, \ldots, p_{2r_2}\}} \simeq N_D(q)|_{D \smallsetminus \{q, p_{11}, \ldots, p_{1r_1}, p_{21}, \ldots, p_{2r_2}\}}.\]

Write $\nu \colon D \sqcup R_1(t) \sqcup R_2(t) \to D \cup R_1(t) \cup R_2(t)$
for the normalization map.
Let $\hat{\L} \subset \nu^* \hat{\N}$
be a destabilizing line bundle, i.e.\ which satisfies $\mu^{\text{adj}}(\hat{\L}) \geq \mu(\hat{\N})$.
Let $\ell_D$, $\ell_1$, and $\ell_2$
denote the slopes of the restriction of $\hat{\L}$ to $D$, $R_1(t)$, and $R_2(t)$,
and $c$ denote the number of nodes of $D \cup R_1(t) \cup R_2(t)$
above which the fibers of $\hat{\L}$ do not coincide
(for $t \in \Delta^*$).
Since being perfectly balanced is open, Condition~\ref{assump:balanced_slope}
implies that the $\hat{\N}|_{R_i(t)}$ are perfectly balanced.
We therefore have
\begin{equation} \label{ell12}
\ell_i \leq \mu(\hat{\N}|_{R_i(t)}) \quad \text{and} \quad c \geq 0,
\end{equation}
but
\[\mu^{\text{adj}}(\hat{\L}) = \ell_1 + \ell_2 + \ell_D - c \geq \mu(\hat{\N}|_{R_1(t)}) + \mu(\hat{\N}|_{R_2(t)}) + \mu(\hat{\N}|_D).\]
If $\ell_D > \mu(\hat{\N}|_D)$, i.e.\ $\N^* = \hat{\N}|_D$ is unstable, then
$N$ is unstable by Proposition~\ref{prop:stab-open}.
Thus either:
\begin{enumerate}[(i)]
\item \label{unstable} $N$ is unstable, or
\item \label{semistable} \eqref{ell12} is an equality --- i.e.\ $\ell_i = \mu(\hat{\N}|_{R_i(t)})$ and $c = 0$ --- and $\ell_D = \mu(N)$.
\end{enumerate}

In case~(\ref{semistable}), our first task is to translate the condition that \eqref{ell12} is an equality
to information about the restriction $\L^* = \hat{\L}|_D$.
(The condition that $\ell_D = \mu(N)$ already concerns $\L^*$.)
To do this, observe that since the $\hat{\N}|_{R_i(t)}$
are perfectly balanced, we have a canonical isomorphism
\[\varphi_{ij}^* \colon\ \pp \N^*|_{q_i(t)} \xlongrightarrow{\sim} \pp \N^*|_{p_{ij}(t)} \quad \text{for $t \in \Delta^*$}.\]
Writing $\L^* = \hat{\L}|_D$, the condition that \eqref{ell12} is an equality then implies that
\begin{equation} \label{translate}
\L^*|_{p_{ij}(t)} = \varphi_{ij}^*(\L^*|_{q_i(t)}) \quad \text{for $t \in \Delta^*$}.
\end{equation}

By Proposition~\ref{prop:stab-open},
we can extend $\L^*$ across the central fiber to a subbundle $\L \subset \N$,
and consider the restriction $L \colonequals \L|_0 \subset N$ to the central fiber. Our second task is to figure out what \eqref{translate} implies
for $L$. (Figuring out what $\ell_D = \mu(N)$ implies for $L$ is easy:
Since $\mu$ is constant in flat families, it implies $\mu(L) = \mu(N)$.)

To do this, we
observe that the bundles $N_{D \cup R_i(t)}$ fit together
to form bundles $\hat{\N}_i$ over $\Delta$ (including over $t = 0$).
Writing $\N_i = \hat{\N}_i|_D$,
there are natural inclusions
$\N_i \subset \N$,
which are isomorphisms away from $R_{\bar{i}}(t) \cap D$ (here $\bar{i} = 3 - i$ denotes the other index) --- 
so in particular at $q_i(t)$ for $t \neq 0$, and at $p_{ij}(t)$ for all $t$.
This inclusion induces a birational isomorphism on projectivizations
$\pp \N_i \dashrightarrow \pp \N$.
The advantage to working with $\N_i$ is that
$\hat{\N}_i|_{R_i(t)}$ is perfectly balanced,
so we obtain
regular maps \emph{defined over $\Delta$ (in particular for $t = 0$)}:
\[\varphi_{ij} \colon\ \pp \N_i|_{q_i(t)} \xlongrightarrow{\sim} \pp \N_i|_{p_{ij}(t)} \quad \text{for $t \in \Delta$},\]
that are compatible with the $\varphi_{ij}^*$ in the sense that the following diagram commutes:
\[\begin{tikzcd}
\pp \N_i|_{q_i(t)} \arrow[r, "\varphi_{ij}"] \arrow[d, dashed] & \pp \N_i|_{p_{ij}(t)} \arrow[d, equal]\\
\pp \N|_{q_i(t)} \arrow[r, "\varphi_{ij}^*", dashed] & \pp \N|_{p_{ij}(t)}
\end{tikzcd}\]

We now restrict to the graph of $q_i(t)$.
Then the map $\N_i \subset \N$ drops rank
exactly over $t = 0$. Its kernel at $t = 0$ is
the one-dimensional subspace $D_i \subset N_{D \cup R_i}|_q$
corresponding to sections that fail to smooth the node at $q$,
and its image is given by the one-dimensional
subspace of $F_i \subset N|_q$ corresponding to the tangent direction of $R_i$ at $q$.
The rational map $\pp\N_i \dashrightarrow \pp\N$
is thus obtained by blowing up at $D_i$,
and contracting the proper transform of the fiber over $q$ to $F_i$:
\begin{center}
\begin{tikzpicture}[scale=0.8]
\draw (5, 4) -- (5, 6);
\draw (6, 4) -- (6, 6);
\draw (6.5, 4) -- (7.5, 5.5);
\draw (6.5, 6) -- (7.5, 4.5);
\draw (8, 4) -- (8, 6);
\draw (9, 4) -- (9, 6);
\draw (0, 0) -- (0, 2);
\draw (1, 0) -- (1, 2);
\draw (2, 0) -- (2, 2);
\draw (3, 0) -- (3, 2);
\draw (4, 0) -- (4, 2);
\draw (10, 0) -- (10, 2);
\draw (11, 0) -- (11, 2);
\draw (12, 0) -- (12, 2);
\draw (13, 0) -- (13, 2);
\draw (14, 0) -- (14, 2);
\draw[dashed, ->] (5, 1) -- (9, 1);
\draw[->] (6, 3.75) -- (3, 2.25);
\draw[->] (8, 3.75) -- (11, 2.25);
\filldraw (2, 1) circle[radius=0.03];
\filldraw (12, 1) circle[radius=0.03];
\draw(-0.3,1) node[left]{$\pp \N_i|_{q_i(t)}$};
\draw (2.3, 1) node{$D_i$};
\draw (12.3, 1) node{$F_i$};
\draw(14.3,1) node[right]{$\pp \N|_{q_i(t)}$};
\end{tikzpicture}
\end{center}

The line subbundle $\L|_{q_i(t)} \subset \N|_{q_i(t)}$ defines a section of $\pp \N|_{q_i(t)}$ and (by curve-to-projective extension) of $\pp \N_i|_{q_i(t)}$; if the first of these sections does not pass through $F_i$, then the second \emph{must} pass through $D_i$.
Combining this with \eqref{translate}, when we pass to the central
fiber, the fibers of $L$ at the $p_{ij}$
can sometimes be described in terms of
\[D_{ij} \colonequals \varphi_{ij}(D_i).\]

Namely, by our assumption that the tangent directions to $D$, $R_1$, and $R_2$
span $\pp^3$, the subspaces $F_1$ and $F_2$ are disjoint.
The fiber $L|_q \subset N|_q$ thus either:
\begin{enumerate}[(a)]
\item\label{fibera} Coincides with neither $F_1$ nor $F_2$:
In this case, $L|_{p_{ij}} = D_{ij}$.
\item\label{fiberb} Coincides with $F_1$ but not $F_2$:
In this case, $L|_{p_{2j}} = D_{2j}$
and $L|_q = F_1$.
\item\label{fiberc} Coincides with $F_2$ but not $F_1$:
In this case, $L|_{p_{1j}} = D_{1j}$
and $L|_q = F_2$.
\end{enumerate}
The upshot of this is the following lemma.

\begin{lem}\label{lem:limit_gluing} With the above notation, if
\[\begin{array}{ll}
\text{every sub-line-bundle of\ldots} & \text{has slope\ldots} \\
N & \leq \mu(N) \\
N[p_{ij} \to D_{ij}] & < \mu(N) \\
N[q \to F_1][p_{2j} \to D_{2j}] & < \mu(N) \\
N[q \to F_2][p_{1j} \to D_{1j}] & < \mu(N),
\end{array}\]
then $N_{D \cup R_1(t) \cup R_2(t)}$ is stable, for $t \in \Delta$ generic.
In particular, if these four vector bundles are merely
\emph{semistable}, then $N_{D \cup R_1(t) \cup R_2(t)}$ is \emph{stable} for $t \in \Delta$
generic.
\end{lem}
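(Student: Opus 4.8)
The plan is to argue by contradiction, taking as input the dichotomy established in the discussion immediately preceding the lemma. Suppose $N_{D \cup R_1(t) \cup R_2(t)}$ fails to be stable for generic $t \in \Delta^*$. As shown above, after extracting a destabilizing line bundle $\hat{\L}$ and passing to the central fiber, this forces one of two alternatives: either $N$ is unstable, or there is a line subbundle $L \subset N$ with $\mu(L) = \mu(N)$ whose fibers satisfy one of the three mutually exclusive conditions (a), (b), (c) recorded above.

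First I would dispatch the unstable alternative: since $N$ has rank $2$ on the smooth curve $D$, instability means it carries a line subbundle of slope strictly exceeding $\mu(N)$, which directly violates the first hypothesis that every sub-line-bundle of $N$ has slope at most $\mu(N)$. It then remains to treat the case of a balanced destabilizing $L$. The key mechanism is that prescribing the fiber of a line subbundle at a point is the same as lifting it through an elementary modification: for a one-dimensional subspace $W \subset N|_p$, the bundle $N[p \to W]$ consists of exactly those local sections of $N$ whose value at $p$ lies in $W$, so any $L \subset N$ with $L|_p = W$ is automatically a subsheaf of $N[p \to W]$, and its saturation there is a sub-line-bundle of degree at least $\deg L$.

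Applying this point by point, in case (a) the equalities $L|_{p_{ij}} = D_{ij}$ place $L$ inside $N[p_{ij} \to D_{ij}]$; in case (b) the equalities $L|_q = F_1$ and $L|_{p_{2j}} = D_{2j}$ place it inside $N[q \to F_1][p_{2j} \to D_{2j}]$; and in case (c), symmetrically, inside $N[q \to F_2][p_{1j} \to D_{1j}]$. In every case the saturation of $L$ in the relevant modification is a sub-line-bundle of slope at least $\deg L = \mu(N)$, contradicting the corresponding hypothesis that all such sub-line-bundles have slope strictly below $\mu(N)$. This contradiction establishes stability of $N_{D \cup R_1(t) \cup R_2(t)}$ for generic $t$.

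For the final ``in particular'' clause, I would note that each elementary modification of $N$ towards a one-dimensional subspace lowers the degree by $1$, hence the slope by $1/2$, and each of the three modification bundles involves at least one such modification; thus each has slope strictly less than $\mu(N)$. Consequently semistability of a modification already forces all of its sub-line-bundles to have slope at most its own slope, which is $< \mu(N)$, so the weaker hypothesis that the four bundles are merely semistable supplies the four strict slope bounds and stability follows. I expect the only genuine obstacle to be bookkeeping: matching each geometric alternative (a)--(c) with the correct tuple of modification data $F_i$ and $D_{ij}$ produced by the limiting-gluing analysis, and checking that the fiber conditions on $L$ are stated with respect to exactly those subspaces. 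Once the definition of elementary modification is unwound, the underlying vector-bundle manipulations are routine.
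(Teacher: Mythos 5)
Your proposal is correct and follows essentially the same route as the paper: the lemma is stated there as the ``upshot'' of the preceding discussion, and your argument is exactly that discussion made explicit --- the dichotomy (either $N$ unstable, contradicting the first hypothesis, or a subbundle $L \subset N$ with $\mu(L) = \mu(N)$ whose fibers fall into cases (a)--(c)), followed by the observation that the fiber conditions place $L$ as a subsheaf of the corresponding modification, whose saturation then violates the strict slope bound. Your two added details --- taking the saturation of $L$ inside $N[p \to W]$, and the bookkeeping that each modification towards a one-dimensional subspace of this rank-$2$ bundle drops the slope by $1/2$ (which yields the ``in particular'' clause) --- are exactly the steps the paper leaves implicit, and both are handled correctly.
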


Now suppose that $R_i$ is a $2$-secant line (meeting $D$ at $q$ and $p_{i1}$),
and write $q' \in T_{q}D \smallsetminus q$ and $p_{i1}' \in T_{p_{i1}}D \smallsetminus p_{i1}$
for points on the tangent lines to $D$ at $q$ and $p_{i1}$ respectively.
Then we have the explicit decomposition (c.f., Example \ref{ex:line})
\begin{equation}\label{eq:2sec_decomp}N_{D \cup R_i}|_{R_i} \simeq N_{R_i \to q'}(q) \oplus N_{R_i \to p_{i1}'}(p_{i1}) \simeq \O_{\pp^1}(2)^{\oplus 2}.\end{equation}
In particular, we see that Assumption~\ref{assump:balanced_slope} is satisfied.
Moreover, we may use this decomposition to compute
the subspace $D_{i1}$: In terms of \eqref{eq:2sec_decomp}, 
\[D_i = N_{R_i \to p_{i1}'}(p_{i1})|_q \quad \Rightarrow \quad D_{i1} = N_{R_i \to p_{i1}'}(p_{i1})|_{p_{i1}}. \]
To describe this in a way that is compatible with the isomorphism
\[N_{D \cup R_i}|_D \simeq N_D(q + p_{i1})[q \to p_{i1}][p_{i1} \to q],\]
we apply Lemma~8.4 of \cite{aly}, which states that under this isomorphism we have
\begin{equation} \label{Di}
D_{i1} = N_{D \to q}(p_{i1})|_{p_{i1}} \subset N_D(q + p_{i1})[q \to p_{i1}][p_{i1} \to q]|_{p_{i1}}.
\end{equation}
When both $R_1$ and $R_2$ are $2$-secant lines, we have \(N \simeq N_D[p_{11} \to q][p_{21} \to q]\).
Substituting in the \(D_{i1}\) given in \eqref{Di},
Lemma~\ref{lem:limit_gluing} thus gives:

\begin{cor}\label{cor:limit_gluing_22sec}
If $R_1$ and $R_2$ are $2$-secant lines, and the
bundles
\begin{enumerate}[(a)]
\item\label{22sec_a} $N_D[p_{11} \to q][p_{21} \to q]$,
\item\label{22sec_b} $N_D[2p_{11} \to q][2p_{21} \to q]$,
\item\label{22sec_d} $N_D[p_{11} \to q][q \to p_{11}][2p_{21} \to q]$, and
\item\label{22sec_c} $N_D[2p_{11} \to q][p_{21} \to q][q \to p_{21}]$.
\end{enumerate}
are all \emph{semistable}, then $N_{D \cup R_1(t) \cup R_2(t)}$ is \emph{stable} for $t \in \Delta$ generic.
\end{cor}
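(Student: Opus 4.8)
The plan is to apply Lemma~\ref{lem:limit_gluing} with $r_1 = r_2 = 1$, so that each $R_i$ is the $2$-secant line $\overline{q\,p_{i1}}$. The hypotheses of that lemma are in force: the decomposition \eqref{eq:2sec_decomp} exhibits $N_{D \cup R_i}|_{R_i} \simeq \O_{\pp^1}(2)^{\oplus 2}$ as perfectly balanced, which is exactly the input from Assumption~\ref{assump:balanced_slope} needed to run the deformation argument. It therefore remains only to rewrite the four stability hypotheses of Lemma~\ref{lem:limit_gluing} --- concerning $N$, the bundle $N[p_{ij}\to D_{ij}]$ with all modifications imposed, $N[q\to F_1][p_{2j}\to D_{2j}]$, and $N[q\to F_2][p_{1j}\to D_{1j}]$ --- as the four bundles (a)--(d), and then to invoke the ``In particular'' clause.

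The first task is to identify the central-fiber bundle $N$ together with the distinguished subspaces $F_i\subset N|_q$ and $D_{i1}\subset N|_{p_{i1}}$ explicitly as modifications of $N_D$. Since $R_i$ is the line through $q$ and $p_{i1}$, the tangent direction $F_i$ to $R_i$ at $q$ points toward $p_{i1}$, and the discussion preceding the corollary already computed $D_{i1} = N_{D\to q}(p_{i1})|_{p_{i1}}$, the direction at $p_{i1}$ pointing back toward $q$. Applying Lemma~\ref{lem:hh} at each node away from $q$, and using that in the limit the two nodes over $q$ coalesce into a single positive twist, I would show
\[N \simeq N_D(q + p_{11} + p_{21})[p_{11}\to q][p_{21}\to q],\]
so that $N$ is the twist by $\O_D(q + p_{11} + p_{21})$ of the bundle $N_D[p_{11}\to q][p_{21}\to q]$ appearing in (a).

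With these identifications, I would substitute into the remaining three bundles and simplify using two facts: modifications at distinct points commute, and two successive modifications at a single point toward the same direction combine into a double modification, $[p\to q][p\to q] = [2p\to q]$ (as recorded in the statement of Lemma~\ref{lem:1_sec_balanced}). Because $D_{i1}$ points toward $q$, applying $[p_{i1}\to D_{i1}]$ to $N$ --- which already carries $[p_{i1}\to q]$ --- yields $[2p_{i1}\to q]$; likewise $[q\to F_i]$ becomes $[q\to p_{i1}]$. Carrying this out term by term shows that the bundles $N[p_{11}\to D_{11}][p_{21}\to D_{21}]$, $N[q\to F_1][p_{21}\to D_{21}]$, and $N[q\to F_2][p_{11}\to D_{11}]$ are, respectively, the twist by $\O_D(q+p_{11}+p_{21})$ of the bundles (b), (d), and (c).

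Finally, since twisting by a line bundle preserves (semi)stability, the semistability of (a)--(d) is equivalent to the semistability of the four bundles appearing in Lemma~\ref{lem:limit_gluing}; its ``In particular'' clause then yields stability of $N_{D\cup R_1(t)\cup R_2(t)}$ for generic $t$. The main obstacle I anticipate is the bookkeeping in the middle two steps: correctly tracking $F_i$ and $D_{i1}$ through the gluing that defines $N$, and invoking Lemma~8.4 of \cite{aly} to identify $D_{i1}$ with the ``toward $q$'' direction so that the double-modification identity applies cleanly.
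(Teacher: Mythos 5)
Your proposal is correct and takes essentially the same route as the paper: the corollary is just Lemma~\ref{lem:limit_gluing} specialized using the decomposition \eqref{eq:2sec_decomp} (which verifies Assumption~\ref{assump:balanced_slope}), the identification $D_{i1} = N_{D \to q}(p_{i1})|_{p_{i1}}$ from Lemma~8.4 of \cite{aly}, and the observation that $F_i$ is the direction toward $p_{i1}$, so that $N$ and the three further-modified bundles of the lemma become $\O_D(q + p_{11} + p_{21})$-twists of the four listed bundles. Your bookkeeping --- including $N \simeq N_D(q+p_{11}+p_{21})[p_{11}\to q][p_{21}\to q]$ and the collapse $[p_{i1}\to q][p_{i1}\to q] = [2p_{i1}\to q]$ --- matches the paper's, and since semistability is twist-invariant the ``In particular'' clause of Lemma~\ref{lem:limit_gluing} finishes exactly as you say.
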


\begin{rem}
Since \eqref{22sec_c} is obtained from \eqref{22sec_d} by permuting $p_{21}$ and $p_{11}$,
it suffices to prove semistability of \eqref{22sec_a}--\eqref{22sec_d}.
\end{rem}

Now suppose only that $R_1$ is a $2$-secant line.
Applying Lemma~\ref{lem:limit_gluing}, the stability of $N_{D \cup R_1(t) \cup R_2(t)}$ for $t \in \Delta$ generic follows
from the assertions that:
\[\begin{array}{ll}
\text{every sub-line-bundle of\ldots} & \text{has slope\ldots} \\
N & \leq \mu(N) \\
N[p_{11} \to q][p_{2j} \to D_{2j}] & < \mu(N) \\
N[q \to p_{11}][p_{2j} \to D_{2j}] & < \mu(N) \\
N[q \to F_2][p_{11} \to q] & < \mu(N).
\end{array}\]
This follows in turn from the assertion that
\[N[p_{11} \to q] \quad \text{and} \quad N[q \to p_{11}]\]
are stable. We therefore have:

\begin{cor}\label{cor:limit_gluing_4sec2sec}
Suppose that $R_1$ is a $2$-secant line, and write $p_{2j}' \in T_{p_{2j}} R_2 \smallsetminus p_{2j}$
for points on the tangent lines to $R_2$ at the $p_{2j}$.
If the bundles
\begin{enumerate}[(a)]
\item $N_D[p_{2j} \to p_{2j}'][2p_{11} \to q]$ and
\item $N_D[p_{2j} \to p_{2j}'][p_{11} \to q][q \to p_{11}]$
\end{enumerate}
are both \emph{stable/semistable}, then $N_{D \cup R_1(t) \cup R_2(t)}$ is \emph{stable} for $t \in \Delta$ generic.
\end{cor}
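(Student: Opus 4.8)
The plan is to apply Lemma~\ref{lem:limit_gluing} to the pair $(R_1, R_2)$ with $R_1$ the $2$-secant line. As recorded in the discussion preceding the statement, the explicit decomposition~\eqref{eq:2sec_decomp}, together with the identifications $D_{11} = N_{D \to q}(p_{11})|_{p_{11}}$ and $F_1 = T_q R_1$, rewrites the four hypotheses of Lemma~\ref{lem:limit_gluing} as the requirement that every sub-line-bundle of each of
\[
N, \quad N[p_{11} \to q][p_{2j} \to D_{2j}], \quad N[q \to p_{11}][p_{2j} \to D_{2j}], \quad N[q \to F_2][p_{11} \to q]
\]
have slope $\leq \mu(N)$ (for the first) or $< \mu(N)$ (for the other three). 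I would deduce all four from the (semi)stability of the two bundles $N[p_{11} \to q]$ and $N[q \to p_{11}]$, and then identify these two bundles, up to a twist by a line bundle, with the bundles (a) and (b) in the statement; since (semi)stability is preserved by twisting by a line bundle, the hypotheses on (a) and (b) then supply exactly what is needed.

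The technical engine is a slope comparison for down-modifications. Any elementary modification $E[D \to F]$ is a subsheaf of $E$ that agrees with $E$ away from $D$; hence a saturated sub-line-bundle of $E[D \to F]$, re-saturated inside $E$, has slope no smaller than before. Iterating, if $E$ is semistable then every sub-line-bundle of any iterated down-modification $E[D_1 \to F_1] \cdots [D_k \to F_k]$ has slope at most $\mu(E)$. Applying this with $E = N[p_{11} \to q]$, which has slope $\mu(N) - \tfrac12$, disposes of the second and fourth bundles above, since both $N[p_{11} \to q][p_{2j} \to D_{2j}]$ and $N[q \to F_2][p_{11} \to q]$ are subsheaves of $N[p_{11} \to q]$; applying it with $E = N[q \to p_{11}]$ disposes of the third. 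In each case the slope of a sub-line-bundle is at most $\mu(N) - \tfrac12 < \mu(N)$, which is the required strict bound. The conceptual point --- and, I expect, the crux of why the corollary is useful --- is that the $R_2$-gluing data $D_{2j}$ and $F_2$, which are difficult to compute directly, enter only \emph{inside further down-modifications}; they therefore never need to be pinned down, because the two master bundles (which involve only the $2$-secant geometry of $R_1$ and the tangent directions $p_{2j}'$ of $R_2$) already dominate them.

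For the first bundle, I would argue semistability of $N$ directly from that of $N[p_{11} \to q]$. Given a sub-line-bundle $L \subset N$, either $L|_{p_{11}} = N_{D \to q}|_{p_{11}}$, in which case $L \subset N[p_{11} \to q]$ and $\mu(L) \leq \mu(N) - \tfrac12$, or else $L(-p_{11}) \subset N[p_{11} \to q]$ and $\mu(L) \leq \mu(N) + \tfrac12$. Since $\deg N$ is even --- as holds whenever $R_2$ meets $D$ in an even number of points, in particular for the $4$-secant conic application, by the degree count $\deg N = \deg N_D + 3 + r_2$ --- and $\mu(L)$ is an integer, either alternative forces $\mu(L) \leq \mu(N)$.

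Finally, to complete the reduction I would unwind the definition of the central-fibre bundle $N$: away from $q$ it equals $N_{D \cup R_1 \cup R_2}|_D$ and near $q$ it equals $N_D(q)$, so Lemma~\ref{lem:hh} gives $N \simeq N_D(q + p_{11} + \sum_j p_{2j})[p_{11} \to q][p_{2j} \to p_{2j}']$. A further modification at $p_{11}$ towards $q$ then yields $N[p_{11} \to q] \simeq \big(N_D[p_{2j} \to p_{2j}'][2p_{11} \to q]\big) \otimes \O_D(q + p_{11} + \sum_j p_{2j})$, a twist of~(a); modifying instead at $q$ towards $F_1$ --- the tangent direction of $R_1$ at $q$, i.e.\ towards $p_{11}$ --- yields the analogous twist of~(b) by the same line bundle. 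Here I use that twisting by a line bundle commutes with elementary modification and preserves (semi)stability. Chaining these facts --- (a),(b) (semi)stable $\Rightarrow$ the two master bundles (semi)stable $\Rightarrow$ the four conditions of Lemma~\ref{lem:limit_gluing} $\Rightarrow$ $N_{D \cup R_1(t) \cup R_2(t)}$ stable for generic $t$ --- finishes the proof. I expect the only real obstacle to be bookkeeping rather than ideas: one must carefully propagate the modification \emph{directions} through the identifications of $D_{11}$ and $F_1$ and through the twist, and verify the parity of $\deg N$ so that the first condition survives on only a semistability hypothesis.
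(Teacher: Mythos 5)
Your proposal is correct and takes essentially the same route as the paper: the paper's (largely implicit) proof likewise applies Lemma~\ref{lem:limit_gluing}, observes that the unknown gluing data $D_{2j}$ and $F_2$ enter only through further down-modifications, so that stability of the two master bundles $N[p_{11} \to q]$ and $N[q \to p_{11}]$ --- which are exactly twists of the bundles (a) and (b) via $N \simeq N_D(q + p_{11} + \textstyle\sum_j p_{2j})[p_{11} \to q][p_{2j} \to p_{2j}']$, $D_{11} = N_{D \to q}(p_{11})|_{p_{11}}$, and $F_1 = T_qR_1$ --- implies all four conditions. You merely spell out what the paper leaves implicit (the subsheaf slope comparison, the integrality dichotomy at $p_{11}$, and the twist bookkeeping), and your parity check on $\deg N$ is not even needed once one uses \emph{stability} of the master bundles, which the hypothesis supplies since (a) and (b) are rank-$2$ bundles of odd degree in the relevant applications.
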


The bundles $N_D[p_{2j} \to p_{2j}'][2p_{11} \to q]$ and $N_D[p_{2j} \to p_{2j}'][p_{11} \to q][q \to p_{11}]$
appearing in Corollary~\ref{cor:limit_gluing_4sec2sec} are rank~$2$ vector bundles
of odd degree, and hence stability is equivalent to semistability.


\section{Base Cases: Applications of Limits of Gluing Data}

\subsection*{\boldmath The cases $(d, g) = (7, 2), (6, 3), (7, 3), (7, 4), (8, 4)$, and $(8, 5)$}
In these cases, we degenerate to the union of a general Brill--Noether $D$ curve of degree $d - 2$ and genus $g - 2$, a $2$-secant line $R_1$ through general points $q$ and $p_{11}$, and a $2$-secant line $R_2$ through $q$ and another general point $p_{21}$. 
\begin{center}
\begin{tikzpicture}
\draw (1, 3.5) .. controls (0.5, 3.5) and (-1, 3) .. (0, 2);
\draw (0, 2) .. controls (0.5, 1.5) and (1, 1) .. (2, 1);
\draw (0.1, 2.1) .. controls (0.5, 2.5) and (1, 3) .. (2, 3);
\draw (1, 0.5) .. controls (0.5, 0.5) and (-1, 1) .. (-0.1, 1.9);
\draw (2, 1) .. controls (4, 1) and (4, 3) .. (2, 3);
\draw (0.87,3.25)--(2.125,.75);
\draw (2,3.25)--(2,.75);
\draw (2,.75) node[below right]{$R_1$};
\draw (2.125,.75)node[below left]{$R_2$};
\filldraw (2,1) circle[radius=0.03];
\draw (2,1) node[above right ]{$q$};
\filldraw (2,3) circle[radius=0.03];
\draw (2,2.9) node[above right]{$p_{21}$};
\filldraw (1.083,2.82) circle[radius=0.03];
\draw (1.08,2.9) node[left]{$p_{11}$};
\draw (1.25,3.5) node[above left]{$D$};
\end{tikzpicture}
\end{center}

Then $R_1$ and $R_2$ satisfy Assumption \ref{assump:balanced_slope}, and so by Lemma \ref{lem:deform_delta}, the union $D \cup R_1 \cup R_2$ deforms to the union of $D$ and two general $2$-secant lines, which by Lemma \ref{lem:rightcomp}\eqref{lem:rightcomp_2sec} is a Brill--Noether curve of degree $d$ and genus $g$.  By Corollary \ref{cor:limit_gluing_22sec}, it suffices to check that the three bundles \ref{cor:limit_gluing_22sec}\eqref{22sec_a}-\eqref{22sec_d} 
are semistable when $D$ is a general curve of degree $d - 2$ and genus $g - 2$.

\subsection*{\boldmath $(d, g) = (7, 2)$}
Here $D$ is of degree $5$ and genus $0$.
We further degenerate $D$ to the union of a general rational normal curve $C$ (i.e.,~degree $3$ and genus $0$) and two general $1$-secant lines $\bar{u_1, v_1}$ and $\bar{u_2, v_2}$ meeting $C$ at $u_1$ and $u_2$ respectively.  By Lemma \ref{lem:1_sec_balanced}, it therefore suffices to show that the bundles 
\begin{enumerate}[(a)]
\item $N_C[p_{11} \to q][p_{21} \to q][2u_1 \to v_1][2u_2 \to v_2]$, and
\item $N_C[2p_{11} \to q][2p_{21} \to q][2u_1 \to v_1][2u_2 \to v_2]$, and
\item $N_C[p_{11} \to q][q \to p_{11}][2p_{21} \to q][2u_1 \to v_1][2u_2 \to v_2]$,
\end{enumerate}
are semistable.   Limiting $u_1$ to $p_{11}$ and $u_2$ to $p_{21}$ (c.f.\ the discussion in Section \ref{sec:mod_in_fam}), we obtain
\begin{enumerate}[(a)]
\item\label{case_a} $N_C(-p_{11}-p_{21})[p_{11} \to v_1][p_{21} \to v_2]$
\item\label{case_b} $N_C(-2p_{11}-2p_{21})$
\item\label{case_c} $N_C(-p_{11} - 2p_{21})[p_{11} \to v_1][q \to p_{11}]$
\end{enumerate}
After further limiting $p_{11}$ to $p_{21}$ in \eqref{case_a} (resp.~$q$ to $p_{11}$ in \eqref{case_c}), and using the fact that $N_{C \to v_1}|_{p_{11}}$ is a general subspace, these bundles all specialize to twists of $N_C$, and are therefore semistable.

\subsection*{\boldmath $(d, g) = (6, 3)$ and $(7, 3)$}
When $(d, g) = (6, 3)$, then $D$ is of degree $4$ and genus $1$.
For uniformity of notation, we write $C = D$.

When $(d, g) = (7, 3)$, then $D$ is of degree $5$ and genus $1$.
We further degenerate $D$ to the union of a general Brill--Noether
curve $C$ of degree $4$ and genus $1$,
with a general $1$-secant line $M$ meeting $C$ at $u$.
Write $v \in M \smallsetminus u$ for another point on $M$.
By Lemma \ref{lem:1_sec_balanced}, in these cases it suffices
to prove semistability of the bundles \ref{cor:limit_gluing_22sec}\eqref{22sec_a}-\eqref{22sec_d}
with the extra modification $[2u \to v]$.

Combining these cases, it suffices to show that the following $6$ bundles
on $C$ are semistable:
\begin{enumerate}[(a)]
\item \label{g3a} $N_C[p_{11} \to q][p_{21} \to q]$ and $N_C[2u \to v][p_{11} \to q][p_{21} \to q]$,
\item \label{g3b} $N_C[2p_{11} \to q][2p_{21} \to q]$ and $N_C[2u \to v][2p_{11} \to q][2p_{21} \to q]$,
\item \label{g3c} $N_C[p_{11} \to q][q \to p_{11}][2p_{21} \to q]$ and $N_C[2u \to v][p_{11} \to q][q \to p_{11}][2p_{21} \to q]$.
\end{enumerate}

\begin{lem} \label{lem:disaster}
Let $C$ be an irreducible curve, and $u, v, p_{11}, p_{21}, q$ be general points on $C$.
Suppose that the following bundles are semistable:
\begin{enumerate}
\item \label{one} $N_C[2p_{11} \to q]$
\item \label{two} $N_C[2p_{11} \to q][2p_{21} \to q]$
\item \label{three} $N_C[p_{11} \to q][q \to p_{11}][2p_{21} \to q]$.
\end{enumerate}
Then all of the following bundles are also semistable:
\begin{enumerate}[(a)]
\item \label{aa} $N_C[p_{11} \to q][p_{21} \to q]$
\item \label{bb} $N_C[p_{11} \to q][p_{21} \to v]$
\item \label{cc} $N_C[2u \to v][p_{11} \to q][p_{21} \to q]$
\item \label{dd} $N_C[2u \to v][2p_{11} \to q][2p_{21} \to q]$
\item \label{ee} $N_C[2u \to v][p_{11} \to q][q \to p_{11}][2p_{21} \to q]$
\item \label{ff} $N_C[u \to v][v \to u][p_{11} \to q][p_{21} \to q]$
\item \label{gg} $N_C[u \to v][v \to u][2p_{11} \to q][2p_{21} \to q]$
\item \label{hh} $N_C[u \to v][v \to u][2p_{11} \to q][p_{21} \to q][q \to p_{21}]$.
\end{enumerate}
\end{lem}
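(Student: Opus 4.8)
The plan is to deduce the semistability of each of the eight bundles \eqref{aa}--\eqref{hh} from that of the three given bundles \eqref{one}, \eqref{two}, \eqref{three} by \emph{specializing the attaching and pointing data}. Concretely, each of \eqref{aa}--\eqref{hh} is the general member of a flat family of bundles on $C$ obtained by letting the general points $u,v,p_{11},p_{21},q$ collide in a prescribed order. By the discussion in Remark~3.4 of \cite{aly} these families extend over the points of collision, so by Proposition~\ref{prop:stab-open} together with the fact that tensoring by a line bundle preserves (semi)stability, it suffices to identify the central fiber with a twist of one of \eqref{one}, \eqref{two}, \eqref{three}. Throughout I would use four elementary degeneration identities, all coming from the analysis of limits of elementary modifications in \cite{aly}: merging two pointing points $q'\to q$ sends $[p\to q']$ to $[p\to q]$; merging two modification points $p'\to p$ sharing a pointing point sends $[p\to q][p'\to q]$ to $[2p\to q]$; merging two modification points with \emph{distinct} general pointing directions produces a twist, i.e.\ $[p\to q][p'\to q']$ degenerates to $(-p)$ as $p'\to p$; and absorbing a single modification into a double one, so that $[2p\to q][p'\to q']$ degenerates to $(-p)[p\to q]$ as $p'\to p$, the direction of the \emph{double} modification surviving.

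\textbf{The seven routine cases.} With these identities, seven of the eight cases should be direct. For \eqref{aa} I would merge $p_{21}\to p_{11}$ to reach \eqref{one}, and \eqref{bb} first merges the pointing points $v\to q$ to reduce to \eqref{aa}. For \eqref{cc} I would merge $p_{21}\to p_{11}$ and then the pointing points $v\to q$, arriving at \eqref{two}. For \eqref{dd} I would merge $u\to p_{11}$: the two double modifications $[2u\to v]$ and $[2p_{11}\to q]$ have distinct directions and collapse to $(-2p_{11})$, leaving a twist of \eqref{one}. For \eqref{ee} and \eqref{gg} I would merge $u\to p_{11}$, combining the modifications at $u$ and $p_{11}$ (the direction of the double modification surviving), and then merge $v\to q$; in both cases the central fiber becomes a twist of \eqref{three}. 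Finally, for \eqref{ff} I would merge $p_{21}\to p_{11}$ and then the pointing points $q\to v$, so that the surviving data $N_C[u\to v][v\to u][2p_{11}\to v]$ is exactly \eqref{three} after relabelling the general points. In each of these, every collision used is one of the four benign types above, so no assumption on the characteristic enters.

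\textbf{The delicate case \eqref{hh}, and the main obstacle.} The one genuinely delicate case is \eqref{hh}, and this is where I expect the real difficulty. The obstruction is the presence of the two ``mutual pairs'' $[u\to v][v\to u]$ and $[p_{21}\to q][q\to p_{21}]$ (each of which is, up to twist, the restriction to $C$ of the normal bundle of a $2$-secant line): the most obvious way to turn such a pair into a twist is to let its two points collide, but that is precisely the characteristic-sensitive degeneration underlying Lemma~\ref{lem:1_sec}, and must be avoided here. The resolution I propose is to \emph{recombine} the mutual pairs rather than collapse them. Merging $u\to p_{11}$ and then $v\to q$ converts $[u\to v][v\to u][2p_{11}\to q]$ into $(-p_{11})[p_{11}\to q][q\to p_{11}]$, so that \eqref{hh} degenerates to a twist of
\[N_C[p_{11}\to q][q\to p_{11}][p_{21}\to q][q\to p_{21}],\]
the bundle associated to \emph{two} $2$-secant lines through the common point $q$. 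The key observation is that the two modifications based at $q$, namely $[q\to p_{11}]$ and $[q\to p_{21}]$, point in distinct general directions and so already combine to the twist $(-q)$ \emph{with no further collision}; what remains is $N_C(-q)[p_{11}\to q][p_{21}\to q]$, a twist of \eqref{aa}, which I have already reduced to \eqref{one}.

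\textbf{Expected difficulties.} I expect the bulk of a careful write-up to consist in verifying, case by case, that the indicated limits are genuinely flat and that the central fibers are as claimed --- that is, in justifying each application of \cite[Rem.~3.4]{aly} and in checking the genericity of the pointing directions needed for the ``distinct directions'' identities. The conceptual crux, and the step I would single out as the main obstacle, is the handling of the mutual pairs in \eqref{hh}: routing the collisions so that each pair is recombined and then dissolved via the ``two reverse modifications at a common point'' identity, rather than by a pair-collapse, is exactly what keeps the whole argument characteristic-independent.
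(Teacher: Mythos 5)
Your overall method is the same as the paper's: place each of the bundles (a)--(h) in a flat family by colliding the general points, compute the limits via Remark~3.4 of [ALY19], and identify the central fiber with a twist of a bundle already known to be semistable. Several of your collision routes differ in detail from the paper's --- for (e), (g), and (h) you land on twists of (3) or of (a) by different collision orders, and in (h) you invoke the same-point composition rule $N[q \to p_{11}][q \to p_{21}] \simeq N(-q)$ (valid by the transversality of the two directions, per [ALY19, \S 2]) instead of the paper's collision of $u$ with $q$ --- but these variants all rest on the same limit identities the paper itself uses. In particular, the ``double--double'' collapse $[2u \to v][2p \to q] \rightsquigarrow (-2p)$ that you use in case (d), though omitted from your list of four identities, is exactly the paper's own step there, and your worry that case (h) is the delicate one is somewhat overstated: like you, the paper simply never collides the two points of a mutual pair $[u \to v][v \to u]$ into each other, so no characteristic-sensitive limit arises.

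There is, however, one genuine gap: your reduction of (c). You merge $p_{21} \to p_{11}$ and then $v \to q$, arriving (after relabelling general points) at $N_C[2u \to q][2p_{11} \to q]$, i.e.\ at bundle (2). But the lemma assumes only that (1) and (3) are semistable; (2) appears in the statement merely to fix notation for later use, and is not a hypothesis --- nor is it obviously implied by (1) and (3) (in the paper's applications it is verified separately, by an independent exact-sequence argument on the quadric). So as written, your proof of (c) proves a weaker statement requiring an extra hypothesis. The fix is the paper's route: specialize $u \to p_{21}$, so that by your own identity~4 the pair $[2u \to v][p_{21} \to q]$ degenerates to $(-p_{21})[p_{21} \to v]$, reducing (c) to a twist of (b), hence ultimately to (1). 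With that single route replaced, your argument establishes the lemma as stated.
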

\begin{proof} We argue by specializing the various points on $C$,
to reduce to twists of bundles that we already assumed or proved were semistable.

\begin{enumerate}[(a)]
\item Specialize $p_{21}$ to $p_{11}$; the resulting bundle is $N_C[2p_{11} \to q]$, i.e.\ \eqref{one}.
\item Specialize $v$ to $q$; the resulting bundle is $N_C[p_{11} \to q][p_{21} \to q]$, i.e.\ \eqref{aa}.
\item Specialize $u$ to $p_{21}$; the resulting bundle is $N_C[p_{11} \to q][p_{21} \to v](-p_{21})$, c.f.\ \eqref{bb}.
\item Specialize $u$ to $p_{21}$; the resulting bundle is $N_C[2p_{11} \to q](-2p_{21})$, c.f.\ \eqref{one}.
\item 
Specialize $u$ to $q$; the resulting bundle is $N_C[p_{11} \to q][q \to v][2p_{21} \to q](-q)$. \\
Then specialize $v$ to $p_{11}$; the resulting bundle is $N_C[p_{11} \to q][q \to p_{11}][2p_{21} \to q](-q)$, c.f.\ \eqref{three}.

\item
Specialize $u$ to $p_{21}$; the resulting bundle is $N_C[p_{11} \to q][v \to p_{21}](-p_{21})$. \\
Exchanging $v$ and $p_{21}$, this is $N_C[p_{11} \to q][p_{21} \to v](-v)$, c.f.\ \eqref{bb}.

\item
Specialize $v$ to $p_{21}$; the resulting bundle is $N_C[u \to p_{21}][2p_{11} \to q][p_{21} \to q](-p_{21})$. \\
Then specialize $u$ to $p_{11}$; the resulting bundle is $N_C[p_{11} \to q][p_{21} \to q](-p_{11} - p_{21})$, c.f.\ \eqref{aa}.

\item
Specialize $v$ to $p_{11}$; the resulting bundle is $N_C[u \to p_{11}][p_{11} \to q][p_{21} \to q][q \to p_{21}](-p_{11})$. \\
Then specialize $u$ to $q$; the resulting bundle is $N_C[p_{11} \to q][p_{21} \to q](-p_{11} - q)$, c.f.\ \eqref{aa}. \qedhere
\end{enumerate}
\end{proof}

Applying Lemma~\ref{lem:disaster}\eqref{aa}\eqref{cc}\eqref{dd}\eqref{ee}, and using \eqref{two} and \eqref{three} directly,
it remains only to show that the
three bundles \eqref{one}--\eqref{three} are semistable.

Let $Q$ be a quadric containing $C$. In cases \eqref{one} and \eqref{three},
specialize $p_{11}$ to one of the two points guaranteed by Lemma \ref{lem:3pts}
for the point $q \in C$;
in case \eqref{two}, specialize both $p_{11}$ and $p_{21}$ to the two points
guaranteed by Lemma \ref{lem:3pts}
for the point $q \in C$.
After these specializations, the inclusion $C \subset Q$ induces
normal bundle exact sequences for the modified bundles \eqref{one}, \eqref{two}, and \eqref{three}:
\begin{gather*}
0 \to N_{C/Q}(-p_{11}) \to N_C[2p_{11} \to q] \to N_Q|_C(-p_{11}) \to 0 \\
0 \to N_{C/Q}(-p_{11} - p_{21}) \to N_C[2p_{11} \to q][2p_{21} \to q] \to N_Q|_C(-p_{11} - p_{21}) \to 0 \\
0 \to N_{C/Q}(-2p_{21}) \to N_C[p_{11} \to q][q \to p_{11}][2p_{21} \to q] \to N_Q|_C(-p_{11} - q) \to 0.
\end{gather*}
These sequences are balanced because $\mu(N_{C/Q}) = 8 = \mu(N_Q|_C)$,
so this establishes the semistability of the modified bundles in \eqref{one}, \eqref{two}, and \eqref{three}
as desired.

\subsection*{\boldmath $(d, g) = (7, 4), (8, 4)$, and $(8, 5)$}
When $(d, g) = (7, 4)$, then $D$ is of degree $5$ and genus $2$.
For uniformity of notation, we write $C = D$.

When $(d, g) = (8, 4)$, then $D$ is of degree $6$ and genus $2$.
We further degenerate $D$ to the union of a general Brill--Noether
curve $C$ of degree $5$ and genus $2$,
with a general $1$-secant line $M$ meeting $C$ at $u$.
Write $v \in M \smallsetminus u$ for another point on $M$.
By Lemma \ref{lem:1_sec_balanced}, in these cases it suffices
to prove semistability of the bundles \ref{cor:limit_gluing_22sec}\eqref{22sec_a}-\eqref{22sec_d}
with the extra modification $[2u \to v]$.

When $(d, g) = (8, 5)$, then $D$ is of degree $6$ and genus $3$.
We further degenerate $D$ to the union of a general Brill--Noether
curve $C$ of degree $5$ and genus $2$,
with a general $2$-secant line $M$ meeting $C$ at $u$ and $v$.
Since $N_{C \cup L}|_L \simeq \O_L(2) \oplus \O_L(2)$ is semistable,
it suffices to show that each of the bundles \eqref{22sec_a}-\eqref{22sec_d} are semistable when restricted to $C$,
i.e.\ it suffices
to prove semistability of the bundles \ref{cor:limit_gluing_22sec}\eqref{22sec_a}-\eqref{22sec_d}
with the extra modification $[u \to v][v \to u]$.

Combining these cases, we have to check the semistability of $9$ modifications of $N_C$.
Applying Lemma~\ref{lem:disaster}\eqref{aa}\eqref{cc}\eqref{dd}\eqref{ee}\eqref{ff}\eqref{gg}\eqref{hh},
and using \eqref{two} and \eqref{three} directly, it suffices to check that the three modifications \eqref{one},
\eqref{two}, and \eqref{three} are semistable for $C$ a general curve of degree $5$ and genus $2$.

Let $Q$ be the unique quadric containing $C$.
In all cases, specialize $p_{21}$ to one of the three points on $C$
guaranteed by Lemma \ref{lem:3pts} for which $N_{C \to q}|_{p_{21}}$ and $N_{C/Q}|_{p_{21}}$ coincide to first order.
Then after these specializations, the inclusion $C \subset Q$ induces the following
normal bundle exact sequences for the modified bundles in \eqref{one}, \eqref{two}, and \eqref{three}:
\begin{gather*}
0 \to N_{C/Q}(-2p_{11}) \to N_C[2p_{11} \to q] \to N_Q|_C \to 0 \\
0 \to N_{C/Q}(-2p_{11} - p_{21}) \to N_C[2p_{11} \to q][2p_{21} \to q] \to N_Q|_C(- p_{21}) \to 0 \\
0 \to N_{C/Q}(-p_{11} - p_{21} - q) \to N_C[p_{11} \to q][q \to p_{11}][2p_{21} \to q] \to N_Q|_C(-p_{21}) \to 0.
\end{gather*}
These sequences are balanced because $\mu(N_{C/Q}) = 12$ and $\mu(N_Q|_C) = 10$,
so this establishes the semistability of the modified bundles in \eqref{one}, \eqref{two}, and \eqref{three}
as desired.

\subsection*{\boldmath The cases $(d, g) = (8, 6)$ and $(9, 6)$}
In these cases, we degenerate to
the union of a general Brill--Noether curve $D$ of degree $d - 3$ and genus $g - 4 = 2$,  
a general $2$-secant line $R_1$, meeting $D$ quasi-transversely precisely at $q$ and $p_{11}$, 
 a general $4$-secant conic $R_2$, meeting $D$ quasi-transversely precisely at $q$, $p_{21}$, $p_{22}$, and $p_{23}$.

\begin{center}
\begin{tikzpicture}[scale=1]
\draw (1, 3.5) .. controls (0.5, 3.5) and (-1, 3) .. (0, 2);
\draw (0, 2) .. controls (0.5, 1.5) and (1, 1) .. (2, 1);
\draw (0.1, 2.1) .. controls (0.5, 2.5) and (1, 3) .. (2, 3);
\draw (1, 0.5) .. controls (0.5, 0.5) and (-1, 1) .. (-0.1, 1.9);
\draw (2, 1) .. controls (4, 1) and (4, 3) .. (2, 3);
\draw (2.25, 2) ellipse (0.5 and 1.25);
\draw (0,2.63)--(2.19,.8);
\draw (0.3,2.63) node[above]{$R_1$};
\filldraw (0.355,2.335) circle[radius=0.03];
\draw (0.42,2.28) node[below]{$p_{11}$};
\filldraw (1.95,2.99) circle[radius=0.03];
\filldraw (1.95,1) circle[radius=0.03];
\filldraw (2.58,2.93) circle[radius=0.03];
\filldraw (2.58,1.06) circle[radius=0.03];
\draw (1.965,2.93) node[above left]{$p_{21}$};
\draw (1.965,.98) node[below left]{$q$};
\draw (2.575,2.89) node[above right]{$p_{22}$};
\draw (2.573,1.05) node[below right]{$p_{23}$};
\draw (1.25,3.5) node[above left]{$D$};
\draw (2.25,.4) node{$R_2$};
\end{tikzpicture}
\end{center}

Then $R_1$ and $R_2$ satisfy Assumption \ref{assump:balanced_slope}, and so by Lemma \ref{lem:deform_delta}, the union $D \cup R_1 \cup R_2$ deforms to the union of $D$,
a $2$-secant line, and a $4$-secant conic,
which by Lemma \ref{lem:rightcomp}\eqref{lem:rightcomp_2sec} and \eqref{lem:rightcomp_4sec}
is a Brill--Noether curve of degree $d$ and genus $g$.
By Corollary \ref{cor:limit_gluing_4sec2sec}, it suffices to check that the two bundles
\begin{enumerate}[(a)]
\item $N_D[p_{21} \to p_{21}'][p_{22} \to p_{22}'][p_{23} \to p_{23}'][2p_{11} \to q]$ and
\item $N_D[p_{21} \to p_{21}'][p_{22} \to p_{22}'][p_{23} \to p_{23}'][p_{11} \to q][q \to p_{11}]$
\end{enumerate}
are stable when $D$ is a general curve of degree $d - 3$ and genus $2$.
Limiting $p_{11}$ to $p_{21}$, these bundles fit into families whose central fibers are
\begin{enumerate}[(a)]
\item $N_D[p_{22} \to p_{22}'][p_{23} \to p_{23}'][p_{21} \to q]$
\item $N_D[p_{22} \to p_{22}'][p_{23} \to p_{23}'][q \to p_{21}]$
\end{enumerate}
These bundles are symmetric under exchanging $p_{21}$ and $q$,
so it suffices to show the stability of the first bundle.

When $(d, g) = (8, 6)$, then $D$ is of degree $5$ and genus $2$;
in this case, for uniformity of notation, we write $C = D$,
so our problem is simply to show the stability of the bundle
\begin{equation} \label{g6}
N_C[p_{22} \to p_{22}'][p_{23} \to p_{23}'][p_{21} \to q].
\end{equation}

When $(d, g) = (9, 6)$, then $D$ is of degree $6$ and genus $2$.
We further degenerate $D$ to the union of a general Brill--Noether
curve $C$ of degree $5$ and genus $2$,
with a general $1$-secant line $M$ meeting $C$ at $u$.
Write $v \in M \smallsetminus u$ for another point on $M$.
By Lemma \ref{lem:1_sec_balanced}, in these cases it suffices to prove stability for
the bundle
\[N_C[p_{22} \to p_{22}'][p_{23} \to p_{23}'][p_{21} \to q][2u \to v].\]
Limiting $u$ to $p_{21}$ reduces the stability of this bundle to the stability of
\[N_D[p_{22} \to p_{22}'][p_{23} \to p_{23}'][p_{21} \to v],\]
and subsequently limiting $v$ to $q$ reduces
its stability to the stability of \eqref{g6}.

All that remains is thus to show that \eqref{g6} is stable.
The normal bundle exact sequence for the inclusion of $C$ in the unique quadric $Q$ containing it gives rise to the exact sequence
\begin{equation}\label{86_quadric} 0 \to N_{C/Q}(-p_{21}-p_{22} - p_{23}) \to  {N_C[p_{22} \to p_{22}'][p_{23} \to p_{23}'][p_{21} \to q] } \to \O_C(2) \to 0. \end{equation}
These bundles have slopes $9$, $9.5$, and $10$, respectively; hence it suffices to show that this sequence is nonsplit, i.e.\ that
\[H^0(N_C(-2)[p_{22} \to p_{22}'][p_{23} \to p_{23}'][p_{21} \to q]) = 0.\]

By Lemma \ref{lem:52_sections_from_quadric}, all sections of $N_C(-2)$ come from $H^0(N_{C/Q}(-2))$, which has dimension $2$.  After imposing three negative modifications out of the quadric at general points,
we therefore have no global sections as desired.

\section{ Curves of degree $6$ and genus $2$}

This case was done by Sacchiero in \cite{sacchiero3}.
For completeness, we provide a characteristic-independent proof here.
We shall need the following lemma:

\begin{lem}\label{lem:exact_seq}
Let $E$ be a vector bundle on a smooth curve $C$ sitting in an exact sequence
\[0 \to L_1 \to E \to L_2 \to 0,\]
where $L_1$ and $L_2$ are line bundles.
 If 
$\mu(L_2) = \mu(L_1) + 2$, and 
\[\Hom(L_2(-p), E ) \simeq H^0(E \otimes L_2^\vee(p)) = 0\] 
for all $p \in C$, then $E$ is stable.
\end{lem}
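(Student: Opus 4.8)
The plan is to test stability directly against line subbundles: since $E$ has rank $2$, its only proper subbundles of smaller rank are line subbundles $M \subseteq E$, so it suffices to bound $\deg M$ for each such $M$. First I would record the consequence of the degree hypothesis: from $\mu(L_2) = \mu(L_1) + 2$ we get $\deg L_2 = \deg L_1 + 2$, hence $\mu(E) = \tfrac{1}{2}(\deg L_1 + \deg L_2) = \deg L_1 + 1 = \deg L_2 - 1$, which is an integer. Thus stability of $E$ is equivalent to the statement that every line subbundle $M \subseteq E$ satisfies $\deg M \le \deg L_1$.

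Given a line subbundle $M \subseteq E$, I would examine the composite $M \hookrightarrow E \to L_2$ with the quotient map and split into two cases. If this composite is zero, then $M$ factors through $L_1$, so $\deg M \le \deg L_1$ and there is nothing more to prove. If it is nonzero, then because $M$ and $L_2$ are line bundles on a smooth (integral) curve, the map is injective with image of the form $L_2(-D)$ for an effective divisor $D$, and the induced map $M \to L_2(-D)$ is an isomorphism of line bundles of equal degree; hence $M \cong L_2(-D)$ and $\deg M = \deg L_2 - \deg D$.

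The key step is to feed this into the vanishing hypothesis. The inclusion $M \hookrightarrow E$ is a nonzero element of $\Hom(L_2(-D), E)$. If $\deg D = 1$, say $D = p$, this contradicts $\Hom(L_2(-p), E) = 0$ outright. If $\deg D = 0$, then $\Hom(L_2, E) \ne 0$, and composing with the inclusion $L_2(-p) \hookrightarrow L_2$ for any $p$ — which remains injective since $L_2$ is torsion-free — yields a nonzero element of $\Hom(L_2(-p), E)$, again a contradiction. Therefore $\deg D \ge 2$, giving $\deg M \le \deg L_2 - 2 = \deg L_1$. In either case $\deg M \le \deg L_1 < \mu(E)$, so no line subbundle destabilizes and $E$ is stable.

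I expect the only delicate point to be the bookkeeping identifying a would-be destabilizing subbundle with $L_2$ or $L_2(-p)$: one must verify that a nonzero map of line bundles on an integral curve is automatically injective with image $L_2(-D)$ of the claimed colength, and recognize that it is precisely the quantification over \emph{all} points $p$ in the hypothesis (rather than a single fixed $p$) that simultaneously kills the split case $\deg D = 0$ and the colength-one case $\deg D = 1$. No characteristic-specific input is required.
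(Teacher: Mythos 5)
Your proof is correct and takes essentially the same approach as the paper's: split according to whether the composite $M \hookrightarrow E \to L_2$ vanishes, identify the image of a nonzero composite with $L_2(-D)$, and use the vanishing of $\Hom(L_2(-p),E)$ for \emph{all} $p$ to rule out $\deg D \leq 1$, forcing $\deg M \leq \deg L_2 - 2 = \deg L_1 < \mu(E)$. The only difference is presentational: you spell out the precomposition $L_2(-p) \hookrightarrow L_2$ that handles the $\deg D = 0$ case, which the paper leaves implicit in its assertion that $n \geq 2$.
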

\begin{proof}
Let $\phi \colon F \hookrightarrow E$ be a line subbundle
(which recall is always assumed to be saturated).
Then either $\phi$ factors through $L_1 \hookrightarrow E$, in which case
$F \simeq L_1$ is not destabilizing, or
projection from $E$ to $L_2$ gives a nonzero map $F \to L_2$.

In the second case, 
$F \simeq L_2 (-p_1 - \cdots - p_n)$.
Since $\Hom(L_2(-p), E) = 0$ for all $p \in C$ by assumption,
but $\Hom(L_2 (-p_1 - \cdots - p_n), E) \neq 0$,
we must have $n \geq 2$. Therefore
\[\mu(F) = \mu(L_2) - n = \mu(E) - n + 1 < \mu(E). \qedhere\]
\end{proof}

Now let $C$ be a general Brill--Noether curve of degree $d = 6$ and genus $g = 2$.
Since $d > g + r$, our curve $C$ is a projection of a general Brill--Noether curve $\tilde{C} \subset \pp^4$;
by Lemma 13.2 and the proof of Proposition 13.5 of \cite{aly}, $\tilde{C}$
is a quadric section of a cubic scroll.
Thus, $C$ lies on
a cubic surface $S$ singular along a line (the projection of the cubic scroll),
and the normal bundle exact sequence for $C$ in $S$ gives
\begin{equation} \label{inS}
0 \to \O_C(2) \to N_{C/\pp^3} \to L \to 0,
\end{equation}
for some line bundle \(L\). Taking the second wedge power, we have
\[\O_C(2) \otimes L \simeq \wedge^2 N_{C/\pp^3} = K_C(4).\]
Thus \(L \simeq K_C(2)\).
We have $\mu(\O_C(2)) =  12$ and $\mu(K_C(2)) = 14$,
so by Lemma \ref{lem:exact_seq}, it suffices to show for any $p \in C$, 
\[H^0(N_C(-2) \otimes K_C^\vee(p)) = 0.\]
Let $q \in C$ be conjugate to $p$ under the hyperelliptic involution on $C$,
so $K_C^\vee(p) \simeq \O_C(-q)$ and we must show
$H^0(N_C(-2)(-q)) = 0$.
As $N_{C/S}(-2) \simeq \O_C$ has one nowhere-vanishing section, it suffices to show $N_{C/S}(-2) \hookrightarrow N_C(-2)$ is surjective on global sections; i.e., that $h^0(N_C(-2)) = 1$.

We now prove this by degeneration. (We could not degenerate first, since
our desired degeneration would break the exact sequence \eqref{inS}.) 
Namely, we degenerate $C$ to the union $D \cup_u L$ of a general curve $D$ of degree $5$ and genus $2$, and a general $1$-secant line $L$ meeting at the point $u$.  Let $v$ be a point on $L$ away from $u$. 
By \cite[Lemma 8.5]{aly}, it suffices to show $h^0(N_D(-2)(u)[2u \to v]) = 1$.

Let $Q$ be the unique quadric containing $D$.
By Lemma \ref{lem:52_sections_from_quadric}, $H^0(N_D(-2))$ is $2$-dimensional.
When we twist up by $u$, we have an exact sequence
\[0 \to N_{D/Q}(-2)(u) \to N_D(-2)(u) \to \O_D(u) \to 0.\]
As $N_{D/Q}(-2)(u) \simeq K_D(u)$ has exactly a $2$-dimensional space of global sections and vanishing $H^1$,
the associated long exact sequence in cohomology
gives $h^0(N_D(-2)(u)) = 3$.
Consequently, the image of the evaluation map
\[H^0(N_D(-2)(u)) \to N_D(-2)(u)|_u\]
is a $1$-dimensional subspace of the fiber at $u$.
Since the line $L$ is general, the fiber $N_{D \to v}|_u$ will not coincide with this $1$-dimensional subspace.
Therefore, the inclusion $N_D(-2) \subset N_D(-2)(u)[u \to v]$
induces an isomorphism on global sections.
Combining this with Lemma~\ref{lem:52_sections_from_quadric},
the inclusion
\[N_{D/Q}(-2) \subset N_D(-2)(u)[u \to v]\]
also induces an isomorphism on global sections.
Modifying once more towards $v$, and noting that the generality of $v$
guarantees that $N_{D \to v}$ and $N_{D/Q}$ are transverse at $u$,
we conclude that
$N_{D/Q}(-2)(-u) \subset N_D(-2)(u)[2u \to v]$
induces an isomorphism on global sections. Thus
\[h^0(N_D(-2)(u)[2u \to v]) = h^0(N_{D/Q}(-2)(-u)) = h^0(K_D(-u)) = 1.\]

\section{Curves of degree $7$ and genus $5$}

In this section, for completeness we recall Ballico and Ellia's argument \cite{ballicoellia} that shows that if $C$ is a non-hyperelliptic and non-trigonal space curve of degree 7 and genus 5, then $N_C$ is stable. Equivalently, they show that $N_C^\vee(3)$ is stable.
The bundle $N_C^\vee(3)$ has degree 6, hence we need that it does not admit a line bundle of degree 3 or more. Let
\[0 \to L \to N_C^\vee(3) \to M \to 0\]
be a destabilizing sequence. An elementary Riemann-Roch calculation shows that $h^0(\mathcal{I}_C (3)) \geq 3$,
where $\mathcal{I}_C$ denotes the ideal sheaf of $C$ in $\pp^3$.
Since there cannot be a cubic surface double along a curve of degree $7$, the long exact sequence associated to the exact sequence
\[0 \to \mathcal{I}_C^2 (3) \to \mathcal{I}_C (3)  \to N_C^\vee(3) \to  0\]
shows that the image of
\[h\colon H^0(\mathcal{I}_C(3)) \to H^0(N_C^\vee(3))\]
has dimension at least $3$.
Consequently,
\[\dim(H^0(L) \cap \mathrm{im}(h)) + \dim(H^0(M)) \geq 3.\]

If the degree of $L$ is at least $3$, then the degree of $M$ is at most $3$.
Since the curve is not trigonal or hyperelliptic, we conclude that $h^0(M) \leq 1$. Hence, $\dim(H^0(L) \cap \mathrm{im}(h)) \geq 2$. Thus, there are two cubics in the ideal of $C$ whose image in  $N_C^\vee(3)$ lie in the same line subbundle $L$. Hence, these cubics are everywhere tangent along $C$. By Bezout's Theorem, these cubic surfaces intersect in a curve of degree $9$ and cannot be tangent along a curve of degree $7$. Consequently, $N_C^\vee(3)$ cannot have a line subbundle of degree $3$ or more and is stable.

This completes the proof of Theorem~\ref{thm:main}.

\bibliographystyle{plain}

\end{document}